\newtheorem{theorem}{Theorem}[section]
\newtheorem{lemma}[theorem]{Lemma}
\newtheorem{proposition}[theorem]{Proposition}
\theoremstyle{definition}
\theoremstyle{remark}
\newtheorem{remark}[theorem]{Remark}
\numberwithin{equation}{section}
\newcommand{\A}[1]{\overset{#1}A}
\newcommand{\B}[1]{\overset{#1}B}
\newcommand{\AB}[1]{\overset{#1}{\overline{[A,B]}}}
\newcommand{\df}{\frac{\delta f}{\delta\xi}}
\newcommand\pd[2]{\frac{\partial #1}{\partial #2}}
\newcommand\od[2]{\frac{d #1}{d #2}}
\newcommand{\abs}[1]{\lvert#1\rvert}
\newcommand{\norm}[1]{\left\| #1 \right\|}
\DeclareMathOperator\im{Im}
\DeclareMathOperator\re{Re}
\DeclareMathOperator\const{const}
\begin{document}

\title[Functions of noncommuting operators in an asymptotic
problem]{Functions of Noncommuting Operators\\
in an Asymptotic Problem\\ for a 2D Wave Equation with Variable
Velocity\\ and Localized Right-Hand Side}

\author[S.~Dobrokhotov]{Sergei~Dobrokhotov}
\address{A.~Ishlinsky Institute for Problems in Mechanics,
Russian Academy of Sciences, Moscow\\
Moscow Institute of Physics and Technology}
\email{dobr@ipmnet.ru}
\thanks{Supported by RFBR grants nos.~11-01-00973-a and~11-01-12058-ofi-m-2011
and by the scientific agreement between the Department of Physics,
University of Rome ``La Sapienza," Rome, and A.~Ishlinsky Institute
for Problems in Mechanics, Russian Academy of Sciences, Moscow}

\author[D.~Minenkov]{Dmitrii~Minenkov}
\address{A.~Ishlinsky Institute for Problems in Mechanics,
Russian Academy of Sciences, Moscow\\
Moscow Institute of Physics and Technology}
\email{minenkov\_ds@list.ru}

\author[V.~Nazaikinskii]{Vladimir~Nazaikinskii}
\address{A.~Ishlinsky Institute for Problems in Mechanics,
Russian Academy of Sciences, Moscow\\
Moscow Institute of Physics and Technology}
\email{nazay@ipmnet.ru}

\author[B.~Tirozzi]{Brunello Tirozzi}
\address{Department of Physics, University ``La Sapienza,'' Rome}
\email{brunello.tirozzi@roma1.infn.it}

\renewcommand{\subjclassname}{Mathematics Subject Classification
(2010)}

\subjclass[2010]{Primary 35L05; Secondary 81Q20, 35Q35}

\keywords{Wave equation, localized solutions, asymptotics,
noncommutative analysis, canonical operator}

\date{February 6, 2012}

\dedicatory{Dedicated to Vladimir Rabinovich}

\begin{abstract}
In the present paper, we use the theory of functions of
noncommuting operators, also known as noncommutative analysis
(which can be viewed as a far-reaching generalization of
pseudodifferential operator calculus), to solve an asymptotic
problem for a partial differential equation and show how, starting
from general constructions and operator formulas that seem to be
rather abstract from the viewpoint of differential equations, one
can end up with very specific, easy-to-evaluate expressions for the
solution, useful, e.g., in the tsunami wave problem.
\end{abstract}

\maketitle

\tableofcontents

\section{Introduction}

In the present paper, we use the theory of functions of
noncommuting operators \cite{14,15,18}, aka noncommutative analysis
(which can be viewed as a far-reaching generalization of
pseudodifferential operator calculus), to solve an asymptotic
problem for a partial differential equation and show how, starting
from general constructions and operator formulas that seem to be
rather abstract from the viewpoint of differential equations, one
can end up with very specific, easy-to-evaluate expressions for the
solution, useful, e.g., in the tsunami wave problem.

We consider the Cauchy problem with zero initial data for a 2D wave
equation with variable velocity and with right-hand side localized
near the origin in space and decaying in time. One physical
interpretation of this problem is that it describes, in the linear
approximation, the propagation of tsunami waves generated by local
vertical displacements of the ocean bottom (see
\cite{1,2,3,9,10,17,25,26,27} and also \cite{7,8,24,5,6,12} and the
bibliography therein). Normally, the diameter of the region where
these displacements occur (some tens to a hundred of kilometers) is
much smaller than the distance traveled by the waves (thousands of
kilometers), and their ratio, $\mu$, can serve as a small
parameter. Accordingly, we are interested in the asymptotics of the
solution as $\mu\to0$. In the simplest \textit{piston model} of
tsunami generation, the bottom displacement occurs instantaneously
at $t=0$. This corresponds to a right-hand side of the form
$\delta'(t)v(x)$, where $\delta(t)$ is the Dirac delta function, and the
problem is immediately equivalent, via Duhamel's principle, to the
Cauchy problem for the homogeneous wave equation with initial data
$v(x)$ for the unknown function itself and zero initial data for
its $t$-derivative. Fairly explicit asymptotic solution formulas
suitable for easy implementation in \textsl{Wolfram Mathematica}
\cite{20} were constructed and analyzed for the latter problem in
\cite{5,6,24,7,8,12} on the basis of a generalization of Maslov's
canonical operator \cite{13,14}. Now assume we wish to take into
account the fact that the ocean bottom displacement evolves in time
rather than happens instantaneously. Then it is natural to consider
a right-hand side of the form $g'(t)v(x)$, where $g(t)$ is some
smooth approximation to the delta function. An analysis shows that
the solution can be represented as the sum of two parts, a
propagating part, which travels along the characteristics, and a
transient part, which is localized in the vicinity of the origin
and decays in time. The propagating part can further be represented
as the solution of the Cauchy problem for the homogeneous wave
equation with initial data obtained from $v(x)$ by application of
certain functions $f(L)$ of the spatial part $L$ of the wave
operator, where the corresponding symbols $f(\xi)$ are given by
simple formulas expressing them via the Fourier transform of
$g(t)$. These initial data, also localized near the origin, will be
referred to as the \textit{equivalent source functions}. The
transient part of the solution is given by a formula similar to
those for the equivalent source functions with the only difference
that the function $f(\xi)$ additionally depends on time as a
parameter. The transient part is apparently not so important in
tsunami wave analysis, but nevertheless it might be useful from the
viewpoint of satellite registration of tsunami waves
\cite{25,26,27}. Since, as was mentioned above, the asymptotic
formulas for the solution of the Cauchy problem with localized
initial data for the homogeneous wave equation are already known
from \cite{5,6,24,7,8,12}, we see that the only remaining thing is
to compute $f(L)v$ for all these functions $f(\xi)$. It is here
that noncommutative analysis comes fully into play. Note that $L$
is an operator with variable coefficients, and so computing the
function $f(L)$ efficiently may prove quite a challenging task.
However, all we actually need is the asymptotics of $f(L)v$, and
methods of noncommutative analysis permit one to prove that
$f(L)v=f(L_0)v$ plus an asymptotically small remainder, where $L_0$
is obtained from $L$ by freezing the coefficients at the origin.
Now computing $f(L_0)v$ is a breeze, because $f(L_0)$ is conjugate
by the Fourier transform to the operator of multiplication by the
function $f(\sigma_{L_0}(p))$, where $\sigma_{L_0}(p)$ is the symbol
of~$L_0$.

The one-dimensional counterpart of the problem studied in the
present paper was considered in~\cite{4}. In the two-dimensional
case, the results were announced in~\cite{16}, where the proofs
were partly only sketched and partly absent altogether. Here we
develop and refine these results and give complete proofs. Finally,
note that we deal with the setting in which the wave propagation
velocity is assumed to vanish nowhere. The case it which it
vanishes (as it happens on the coastline in the tsunami run-up
problem) is much more complicated. The asymptotics of solutions of
such degenerate problems in some special cases was considered in
the spirit of the approach of \cite{5,6,24,7,8,12} in
\cite{21,22,23} (see also references therein); in the present
paper, we restrict ourselves to wave propagation in open ocean.

The outline of the paper is as follows. In Sec.~\ref{s1}, we give a
detailed statement of the mathematical problem and write out
well-known formulas expressing the solution in operator form. Using
these formulas, we split the solution into the sum of the
propagating and transient parts. Section~\ref{s2} presents simple
formulas for the asymptotics of the solution. The proofs of the
theorems stated in this section depend on the results presented in
Sec.~\ref{s4}, which is the most important part of the paper and
where the asymptotics of the equivalent source functions and the
transient part of the solution are computed with the use of the
noncommutative analysis machinery. Finally, Sec.~\ref{s5} provides
two simple examples; all computations and visualizations in these
examples have been done with \textsl{Wolfram Mathematica}.

\section{Exact solution}\label{s1}

\subsection{Statement of the problem}\label{ss11}

Consider the Cauchy problem for the wave equation
\begin{equation}\label{eq1.1}
 \frac{\partial^2\eta}{\partial t^2}-\frac\partial{\partial
x_1}\biggl(c^2(x)\frac{\partial \eta}{\partial
x_1}\biggr)-\frac\partial{\partial
x_2}\biggl(c^2(x)\frac{\partial\eta}{\partial x_2}\biggr)
  =Q, \qquad t\ge0,
\end{equation}
with the initial conditions
\begin{equation}\label{eq1.2}
   \eta|_{t=0}=0, \qquad \eta_t|_{t=0}=0,
\end{equation}
where $x=(x_1,x_2)\in\mathbf{R}^2$, $\eta=\eta(x,t)$ is the unknown
function, $c(x)$ is an everywhere positive smooth function
stabilizing at infinity,\footnote{That is, $c(x)=\const>0$ for
sufficiently large $\abs{x}$.} and the right-hand side $Q=Q(x,t)$
depends on two parameters $\lambda,\mu>0$ and has the form
\begin{equation}\label{eq1.1b}
    Q(x,t)=\lambda^2g_0'(\lambda t)V\biggl(\frac x\mu\biggr)
\end{equation}
with some smooth real functions $V(y)$, $y\in\mathbf{R}^2$, and
$g_0(\tau)$, $\tau\in[0,\infty)$, such that
\begin{gather}\label{eq1.1c}
    \abs{V^{(\alpha)}(y)}\le C_\alpha(1+\abs{y})^{-\abs{\alpha}-\varkappa},\qquad
    \abs{\alpha}=0,1,2,\dots\,,\\
\label{eq1.1d}
        g_0(0)=0,\qquad
    \int_0^\infty g_0(\tau)\,d\tau=1,\qquad
    \abs{g_0^{(k)}(\tau)}\le C_ke^{-\nu\tau},\quad k=0,1,2,\dots\,,
\end{gather}
for some $\varkappa>1$, $\nu>0$, and positive constants $C_\alpha$ and
$C_k$.
\begin{remark}\label{rem-0}
One can also consider the case in which $g_0(\tau)$ decays as some
(sufficiently large) negative power of $\tau$ as $\tau\to\infty$.
In this case, the estimates are somewhat more awkward, and we
restrict ourselves to the case of the physically natural
exponential decay~\eqref{eq1.1d} for the sake of clarity.
\end{remark}

Our aim is to find the asymptotics as $\mu\to0$ of the solution of
problem \eqref{eq1.1} on an arbitrary finite time interval
uniformly with respect to $\lambda$ in the region
\begin{equation}\label{eq1.1e}
    \lambda\mu>\const>0.
\end{equation}
This will be done in Secs.~\ref{s2} and~\ref{s4}, and in the
present section we deal with the exact solution of the problem.

\subsection{Physical interpretation and examples of right-hand
sides}\label{ss12}

First, speaking in terms of the physical interpretation given in
the introduction, let us explain the meaning of the parameters
$\lambda$ and $\mu$ and condition~\eqref{eq1.1e}. The right-hand side
$Q(x,t)$ describes the time evolution (the factor $\lambda^2g_0'(\lambda
t)$) and the spatial shape (the factor $V(x/\mu)$) of the
perturbation (the tsunami source). In view of~\eqref{eq1.1d}, $\lambda$
characterizes the decay rate of the perturbation, so that
$1/\lambda\sim t_0$, where $t_0$ is the mean lifetime of the
perturbation. The small parameter $\mu$ characterizes the source
size $r_0$, $\mu\sim r_0$. We see that the product $\lambda\mu=r_0/t_0$
has the dimension of velocity and rewrite condition~\eqref{eq1.1e}
in the form
\begin{equation}\label{eq1.1ea}
    \frac{c_0}{\lambda\mu}\equiv\frac{c_0t_0}{r_0}\le\omega_0,
\end{equation}
where $c_0=c(0)$, the wave propagation velocity at the origin, is
taken to represent the typical wave propagation velocity in the
problem and $\omega_0$ is some dimensionless constant. This has a very
clear meaning: the waves excited by the perturbation cannot travel
too far before the perturbation dies out; they only cover a
distance $(c_0t_0)$ of the same order of magnitude as the diameter
$r_0$ of the perturbation region. We introduce the ratio
\begin{equation}\label{omega}
    \omega=\frac{c_0}{\lambda\mu},
\end{equation}
so that condition~\eqref{eq1.1ea} (and hence \eqref{eq1.1e})
becomes
\begin{equation}\label{omega1}
    \omega<\omega_0.
\end{equation}
Mathematically, condition~\eqref{omega1} means that the parameter
$\lambda$ is large (at least of the order of $\mu^{-1}$) as $\mu\to0$.
Note that, in view of the first two conditions in~\eqref{eq1.1d},
$\lambda g_0 (\lambda t)\to \delta(t)$ and $\lambda^2g_0'(\lambda t)\to\delta'(t)$ as
$\lambda\to\infty$.

In what follows, the dependence on the parameters $\lambda$ and $\mu$
is sometimes not immediately important to the argument, and in such
cases we often ``hide'' these parameters by using the notation
\begin{equation}\label{notat}
    g(\tau)=\lambda g_0(\lambda\tau),\quad v(x)=V\biggl(\frac
    x\mu\biggr),\quad\text{so that}\quad
    Q(x,t)=g'(t)v(x).
\end{equation}

Next, let us give specific examples of right-hand sides $Q(x,t)$.
In practice, the actual ocean bottom displacement is known neither
in much detail nor very precisely, because the corresponding
measurements are impractical or impossible (cf.~\cite{25,26,27}).
This results in certain freedom, which can be turned into an
advantage. Namely, when constructing the function
$Q(x,t)=g'(t)v(x)$ to be used in the analytical-numerical
simulation according to the model~\eqref{eq1.1}, one should take
ansatzes that, on the one hand, fit the general information
available about the source shape and evolution and, on the other
hand, can be handled efficiently in the computations. (The latter
includes the requirement that these functions, as well as their
Fourier transforms, be given by closed-form expressions, which
permits one to reduce the amount of numerical computations in favor
of the less time-consuming analytical transformations.)

A useful class of functions $V(y)$ satisfying~\eqref{eq1.1c} is
described by the expression~\cite{12,24,11}, generalizing
\cite{9,10,2},
\begin{equation}\label{eq1.1f}
    V(y)=A\biggl(1+\biggl(\frac{y_1}{b_1}\biggr)^2+\biggl(\frac{y_2}{b_2}\biggr)^2\biggr)^{-3/2},
\end{equation}
where $A$, $b_1$, and $b_2$ are real parameters. The Fourier
transform of this function is remarkably simple,
\begin{equation}\label{FtV}
    \widetilde V(p)=Ab_1b_2e^{-\sqrt{b_1^2p_1^2+b_2^2p_2^2}},
\end{equation}
One can further apply a differential operator
\begin{equation*}
 \widehat{P}=P\biggl(\frac{\partial}{\partial y_1},\frac{\partial}
            {\partial y_2}\biggr)
\end{equation*}
with constant coefficients to the function $V$ and then rotate the
coordinate system by some angle $\theta$, thus obtaining a broad
variety of functions of the form
\begin{equation*}\label{1.4}
V_{P, \theta}(y)= [\widehat P V](T(\theta)y), \qquad T(\theta) \equiv
\begin{pmatrix} \cos\theta & \sin\theta
\\-\sin\theta & \cos\theta \end{pmatrix},
\end{equation*}
satisfying~\eqref{eq1.1c}. Such functions model elliptic-shaped
sources of various eccentricity and various direction of axes with
a wavy relief depending on the differential operator $\widehat P$ (see
\cite{5,6}). Figure~\ref{Fig_V_tildeV} shows the graph of $V(y)$
rotated by an angle of $\pi/10$ and of its Fourier transform.

\begin{figure}
\centering
\includegraphics[width=.48\textwidth]{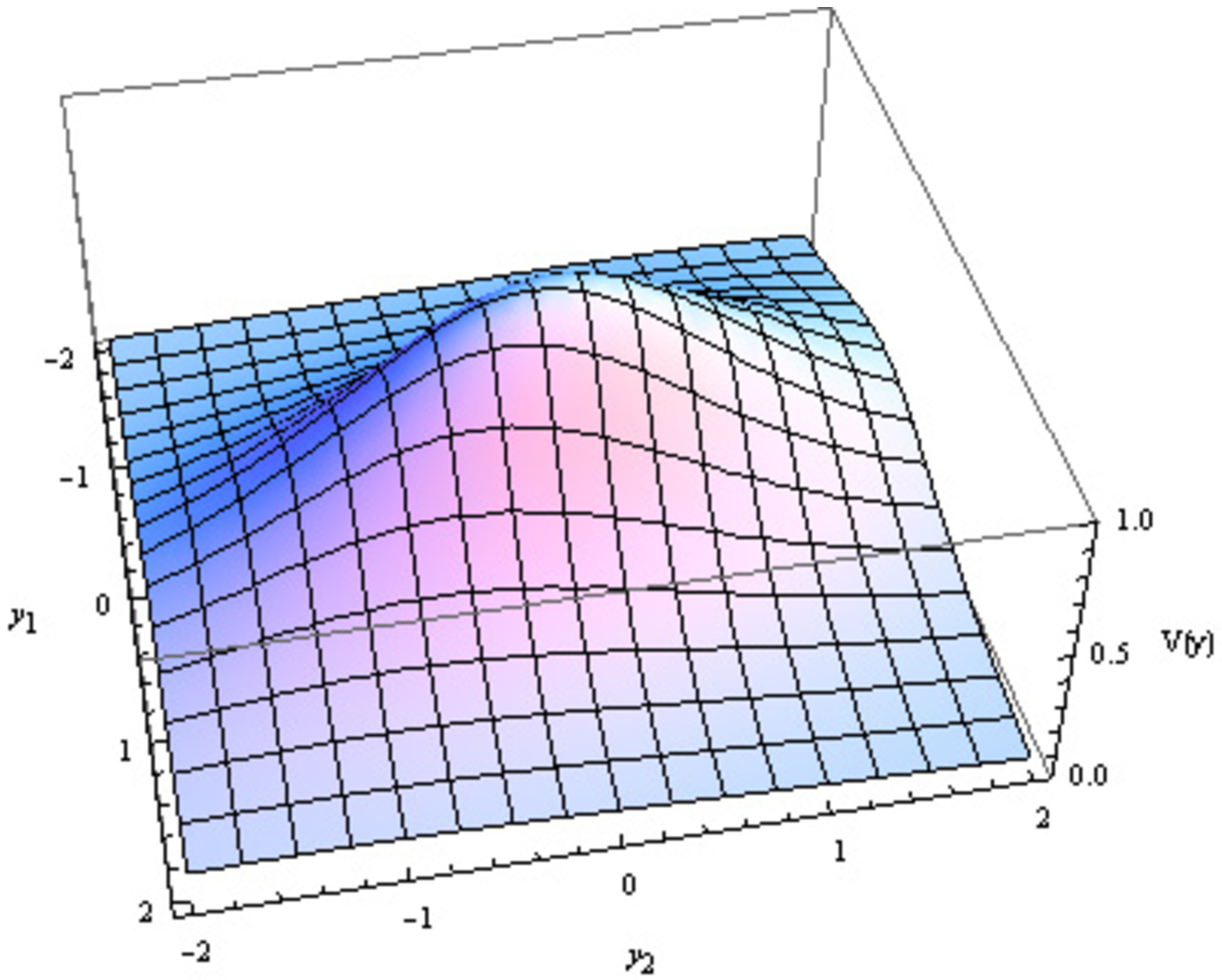}
\includegraphics[width=.48\textwidth]{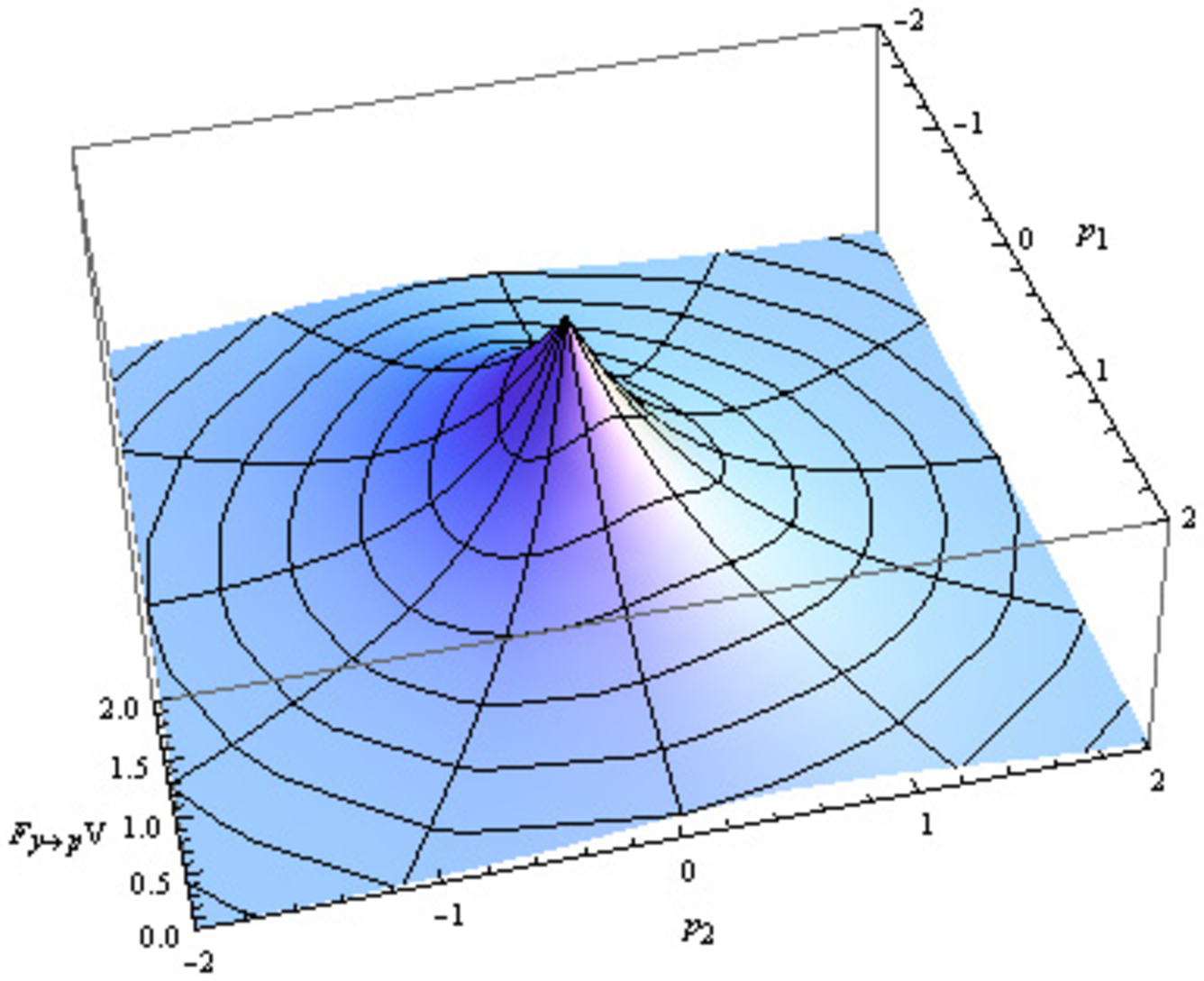}
\caption{The
function $V(y)$ with $b_1=1$ and $b_2=4$ rotated by the angle
$\theta={\pi/10}$ (left) and its Fourier transform $\widetilde V(p)$
(right). \label{Fig_V_tildeV}}
\end{figure}
Let us also give two examples of functions $g_0(t)$
satisfying~\eqref{eq1.1d},
\begin{equation}\label{source}
     \text{(a)}\quad
     g_0(\tau) =ae^{-\tau}(\sin(\alpha\tau+\phi_0)-\sin\phi_0),\qquad
     \text{(b)}\quad g_0(\tau) =e^{-\tau} P(\tau),
\end{equation}
where $\alpha>0$ and $\phi$ are real parameters,
$a=(\alpha^2+1)/(\alpha\cos\phi_0-\alpha^2\sin\phi_0)$ is a
normalizing factor, and $ P(\tau)=\sum_{k=1}^n(k!)^{-1} P_k\tau^k$
is a polynomial of degree $n$ with $\sum_{k=1}^n P_k=1$ (see
Fig.\ref{Fig_g}).
\begin{figure}
\centering
\includegraphics[width=0.48\textwidth,clip]{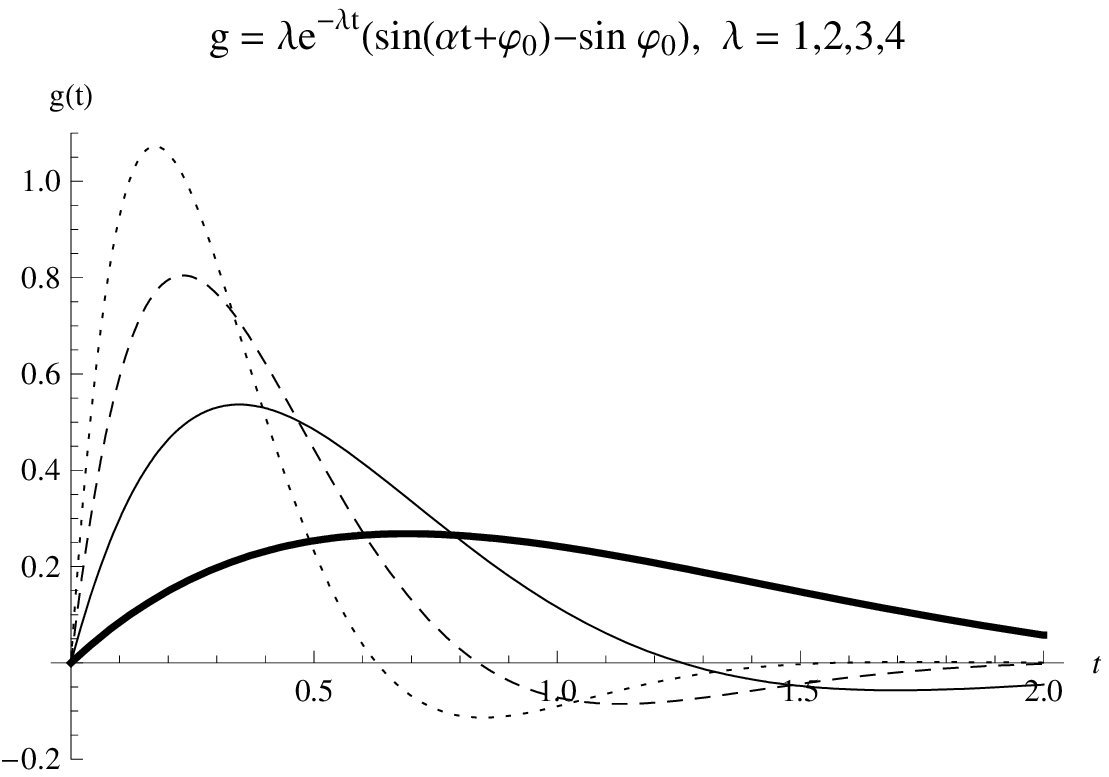}
\includegraphics[width=0.48\textwidth,clip]{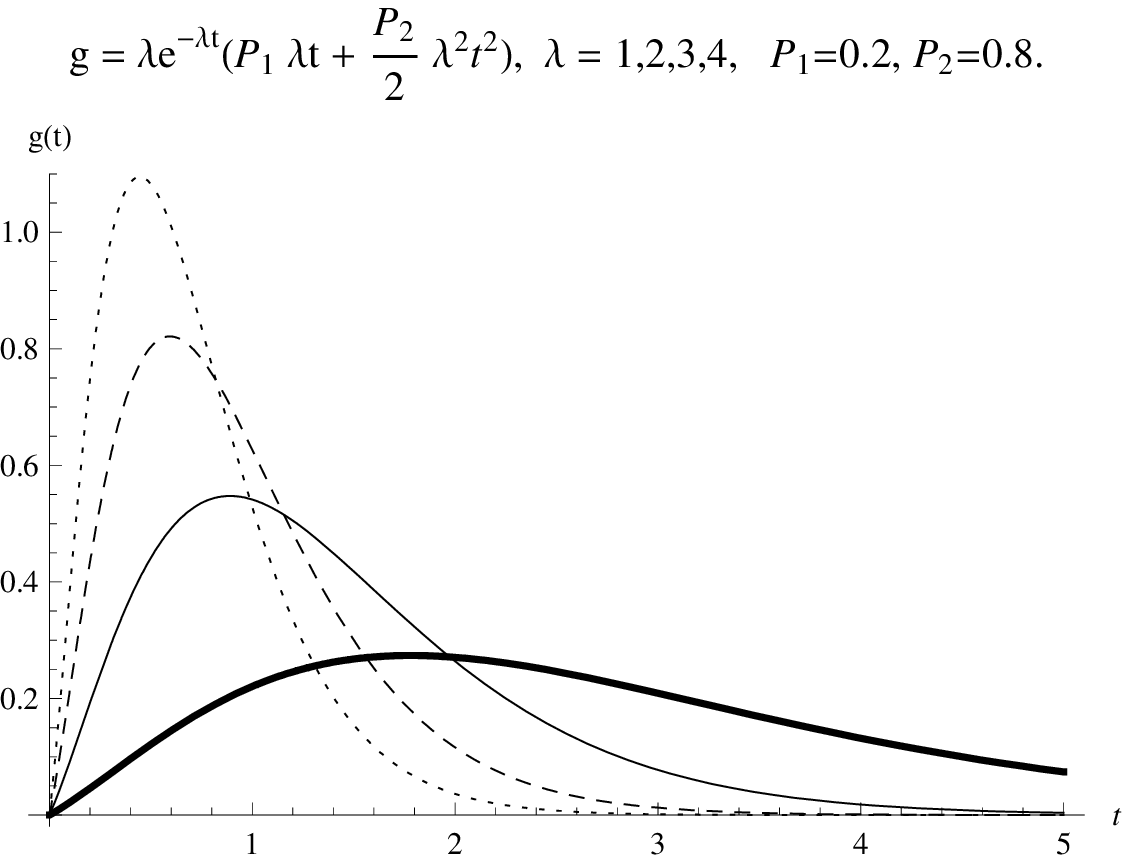}
\caption{Examples of
$g(t)=\lambda g_0(\lambda t)$ for $\lambda=1,2,3,4$:
$g_0(\tau)=e^{-\tau}(\sin(\alpha\tau+\varphi_0)-\sin\varphi_0)$ (upper diagram);
$g_0(\tau)=e^{-\tau}(0.2\tau+0.4\tau^2)$ (lower diagram). \label{Fig_g}}
\end{figure}

\subsection{Operator solution formulas and energy estimates}\label{ss13}

We denote the spatial part of the wave operator in \eqref{eq1.1} by
$L$; thus,
\begin{equation}\label{eq13.1}
Lu=-\frac\partial{\partial x_1}\biggl(c^2(x)\frac{\partial u}{\partial
x_1}\biggr)-\frac\partial{\partial x_2}\biggl(c^2(x)\frac{\partial
u}{\partial x_2}\biggr)\equiv -\langle\nabla,c^2(x)\nabla\rangle u.
\end{equation}
The operator \eqref{eq13.1} (with domain $W_2^2(\mathbf{R}^2)$) is a
nonnegative self-adjoint operator on $L^2(\mathbf R^2)$. Let
$D=\sqrt{L}$ be the positive square root of $L$. Problem
\eqref{eq1.1} becomes
\begin{equation}\label{abst-eq}
    \eta''(t)+D^2\eta(t)=g'(t)v,\qquad
    \eta|_{t=0}=\eta_t|_{t=0}=0.
\end{equation}
Duhamel's formula represents the solution of \eqref{abst-eq} as the
integral
\begin{equation}\label{Duh}
\eta(t)=\int_0^t w(t,\tau)\,d\tau,
\end{equation}
where $w(t,\tau)$ is the solution of the problem\footnote{The
standard Duhamel formula would give $w|_{t=\tau}=0$ and
$w_t|_{t=\tau}=g'(\tau)v$ in \eqref{eq3}, but we have made use of
the special form of the right-hand side.}
\begin{equation}\label{eq3}
w''_{tt}(t,\tau)+D^2 w(t,\tau) =0,\quad w|_{t=\tau}= g(\tau)v,\quad
w_t|_{t=\tau}=0.
\end{equation}
Indeed, the function \eqref{Duh} satisfies \eqref{abst-eq}, because
\begin{align*}
    \eta''(t)+D^2\eta(t)&=\int_0^t\bigl(w''_{tt}(t,\tau)+D^2w(t,\tau)\bigr)\,d\tau
    +\od{}{t}\bigl(w(t,t)\bigr)+w'_t(t,t)\\&=g'(t)v,\qquad
    \eta(0)=0,\qquad \eta'(0)=g(0)v=0
\end{align*}
in view of \eqref{eq1.1d}. Now we can use the general solution
formula (e.g., see \cite[p.~191]{19})
\begin{equation}\label{eq-gsf}
    u(t)=\cos (Dt)u_0+D^{-1}\sin (Dt)u_1
\end{equation}
for the abstract hyperbolic Cauchy problem
\begin{equation}\label{eq-aCp}
    u''(t)+D^2u(t)=0,\qquad u|_{t=0}=u_0,\quad u_t|_{t=0}=u_1
\end{equation}
and write
\begin{equation}\label{eq2.3}
\eta(t)=\biggl[\int_0^t\cos(D(t-\tau))g(\tau)\,d\,\tau\biggr]v
=\re\biggl[\int_0^te^{iD(t-\tau)}g(\tau)\,d\,\tau\biggr]v.
\end{equation}
Here we have used the fact that $g(\tau)$ is real-valued; the real
part of an operator $A$ is defined as usual by $\re
A=\frac12(A+A^*)$. Formula \eqref{eq2.3} is the desired abstract
operator formula for the solution of problem \eqref{eq1.1}.
\begin{remark}
Since the operator $D$ is self-adjoint, it follows that the
expressions $\cos (Dt)$, $\sin (Dt)/D$, and $e^{iDt}$, occurring in
\eqref{eq-gsf} and \eqref{eq2.3}, are well defined in the framework
of functional calculus for self-adjoint operators as functions
$f(D)$ with bounded continuous symbols $f(\xi)=\cos \xi t$,
$f(\xi)=\xi^{-1}\sin \xi t$, and $f(\xi)= e^{i\xi t}$,
respectively. Moreover, $e^{iDt}$ is none other than the strongly
continuous group of unitary operators generated by $D$,
$\re(f(D))=(\re f)(D)$, and, for ``good'' functions $f(\xi)$, the
operator $f(D)$ can be defined not only via the integral over the
spectral measure but also via the Fourier transform as
\begin{equation*}
    f(D)u=\frac1{\sqrt{2\pi}}\langle\widetilde
    f(\tau),e^{i\tau D}u\rangle,\qquad u\in L^2(\mathbf{R}^2),
\end{equation*}
where $\widetilde f$ is the Fourier transform of $f$ and the angle
brackets stand for the value of the distribution $\widetilde f(\tau)$ on
the $L^2(\mathbf{R}^2)$-valued function $e^{i\tau D}u$.
\end{remark}

\begin{remark}\label{rem-2}
The \textit{energy} of the solution of the Cauchy problem
\eqref{eq-aCp} is defined by the formula \cite[p.~191]{19}
\begin{equation}\label{energy-int}
  {\mathcal{E}}[u](t)=\frac12\bigl(\norm{u'(t)}^2+\norm{Du(t)}^2\bigr)
  \equiv \frac12\bigl(\norm{u'(t)}^2+(u(t),Lu(t))\bigr)
\end{equation}
(where $\norm{\,\boldsymbol\cdot\,}$ stands for the $L^2$ norm and
$(\,\boldsymbol\cdot\,,\,\boldsymbol\cdot\,)$ for the $L^2$ inner product) and is conserved in
the course of time. Hence, in view of~\eqref{notat} and the
estimates~\eqref{eq1.1c}, the solution of problem~\eqref{eq3}
(with $\tau$ viewed as a parameter) satisfies
\begin{multline*}
    {\mathcal{E}}[w](t,\tau)
       =\frac{g^2(\tau)}2\norm{D\bigl(V(x/\mu)\bigr)}^2
       =\frac{g^2(\tau)}{2\mu^2}\int_{\mathbf{R}^2}
    \biggl\vert c(x)\nabla V\biggl(\frac x\mu\biggr)\biggr\vert^2\,dx_1dx_2
    \\= \frac{g^2(\tau)}2\int_{\mathbf{R}^2}c^2(\mu y)
    \abs{\nabla V(y)}^2 \,dy_1dy_2
       =\frac{\lambda^2}2g_0^2(\lambda \tau)c_0^2\norm{\nabla V}^2(1+ O(\mu))
\end{multline*}
as $\mu\to0$, where $c_0=c(0)$. Now it follows from \eqref{Duh}
that, with some constant $C$,
\begin{align*}
    {\mathcal{E}}[\eta](t)&\le\left\{\int_0^t\sqrt{{\mathcal{E}}[w](t,\tau)}\,d\tau\right\}^2
    +\frac12\norm{w(t,t)}^2
    \\&\le C\left\{\lambda\int_0^t \abs{g_0(\lambda\tau)}\,d\tau\right\}^2
    +\frac{\mu^2\lambda^2}2g_0^2(\lambda t)\norm{V}^2
    \\&\le C\left\{\int_0^\infty \abs{g_0(\tau)}\,d\tau\right\}^2
    +\frac{\mu^2\lambda^2}2g_0^2(\lambda t)\norm{V}^2
    \\&=O(1)+O(\mu^2\lambda^2 e^{-2\nu\lambda t})
    =O(1)+O(\omega^{-2}e^{-2\nu\lambda t});
\end{align*}
i.e., the energy of the solution is uniformly bounded as $\mu\to 0$
for all $t>\varepsilon>0$. (However, it may have a ``spike'' of the order of
$\omega^{-2}$ for $t\sim 1/\lambda$; of course, this is only important if
$\omega\ll1$.)  In other words, we have chosen a physically natural
normalization of the right-hand side of our problem.
\end{remark}

\begin{remark}\label{rem-3}
For the inhomogeneous wave equation
\begin{equation}\label{rhs}
    u''(t)+D^2u(t)=F(t),
\end{equation}
one has the energy identity
\begin{equation*}
    {\mathcal{E}}[u](t)={\mathcal{E}}[u](0)+\re\int_0^t(F(\tau),u'(\tau))\,d\tau,
\end{equation*}
which implies the well-known estimates\footnote{Their derivation
takes into account the fact that the norm $\norm{u}_s$ is
equivalent to the norm $\norm{(1+L)^{s/2}u}$ by virtue of the
conditions imposed on the velocity $c(x)$.}
\begin{equation}\label{eestiHs}
    \norm{u(t)}_{s+1}+\norm{u'(t)}_{s}\le C(t)\bigl(\norm{u(0)}_{s+1}+\norm{u'(0)}_{s}
    +\sup_{\tau\in[0,t]}\norm{F(\tau)}_s\bigr),
\end{equation}
where $\norm{\,\boldsymbol\cdot\,}_s$ stands for the norm on the Sobolev space
$H^s=W_2^s(\mathbf{R}^2)$; in particular, $\norm{\,\boldsymbol\cdot\,}_0=\norm{\,\boldsymbol\cdot\,}$.
Of the estimates~\eqref{eestiHs}, the most important for us is the
one with $s=0$ (corresponding to the sum of the energy integral and
the $L^2$ norm), in which the main estimates for the norms of
remainders in asymptotic formulas will be obtained. However,
occasionally our argument involves estimates with different~$s$.
\end{remark}

\subsection{Solution splitting into propagating and transient components}\label{ss14}
Let us further transform formula \eqref{eq2.3} to reveal the
structure of the solution and represent it in a form suitable for
subsequent computations. We have
\begin{equation}\label{eq-comp}
\begin{split}
    \int_0^te^{iD(t-\tau)}g(\tau)\,d\,\tau&=
    \int_0^\infty e^{iD(t-\tau)}g(\tau)\,d\,\tau
    -\int_t^\infty e^{iD(t-\tau)}g(\tau)\,d\,\tau\\
    &= e^{iDt}\int_0^\infty e^{-iD\tau}g(\tau)\,d\,\tau
    -\int_0^\infty e^{-iD\tau}g(\tau+t)\,d\,\tau.
\end{split}
\end{equation}
Let $H(\tau)$ be the Heaviside step function ($H(\tau)=1$ for
$\tau\ge0$ and $H(\tau)=0$ for $\tau<0$), and, for $t\ge0$, let
\begin{equation*}
    G(\xi,t)=\int_0^\infty e^{-i\xi\tau}g(\tau+t)\, d\tau\equiv
    \int_{-\infty}^\infty e^{-i\xi\tau}H(\tau)g(\tau+t)\, d\tau
\end{equation*}
be the Fourier transform of $\sqrt{2\pi}H(\tau)g(\tau+t)$ with
respect to the variable $\tau$. (Note that $G(\xi,0)=\sqrt{2\pi}\widetilde
g(\xi)$, where the function $g(\tau)$ is assumed to be extended by
zero for the negative values of $\tau$.) Then formula
\eqref{eq-comp} can be rewritten as
\begin{equation*}
    \int_0^te^{iD(t-\tau)}g(\tau)\,d\,\tau=e^{iDt}G(D,0)-G(D,t)=\sqrt{2\pi}
    e^{iDt}\widetilde g(D)-G(D,t),
\end{equation*}
and accordingly
\begin{align}\label{eq2.5}
\eta(t)&=\eta_{prop}(t)+\eta_{trans}(t),\\
\intertext{where}\label{eq2.5a}
 \eta_{prop}(t)&=\sqrt{2\pi}\re\bigl(e^{iDt}\widetilde g(D)\bigr)v\\
 \nonumber&\equiv
 \sqrt{2\pi}\cos(Dt)\re\widetilde g(D)v - \sqrt{2\pi}\sin(Dt)\im\widetilde g(D)v,\\
 \label{eq2.5b}
 \eta_{trans}(t)&=-\operatorname{Re}(G(D,t))v.
\end{align}

The function $\eta_{prop}(t)$ given by~\eqref{eq2.5a} is the
solution of the Cauchy problem for the homogeneous wave equation
\begin{equation}\label{eq-trans}
    u''(t)+D^2u(t)=0
\end{equation}
with the initial data
\begin{equation}\label{Cauchy-trans}
 u_0=\sqrt{2\pi}\re\widetilde g(D)v,
    \qquad
 u_1=-\sqrt{2\pi}\im\widetilde g(D)Dv.
\end{equation}
(This follows from the comparison of \eqref{eq2.5a} with
\eqref{eq-gsf}.) Hence this function will be called the
\textit{propagating component} of the solution, and the initial
data \eqref{Cauchy-trans} for  $\eta_{prop}(t)$ will be called the
\textit{equivalent source functions}. We shall see in Sec.~\ref{s3}
that, exactly as one should expect, $\eta_{prop}(t)$ propagates
along the characteristics.

The function $\eta_{trans}(t)$ given by~\eqref{eq2.5b} will be
called the \textit{transient component} of the solution, because it
exponentially decays as $\lambda t\to\infty$, as shown by the following
proposition. (We shall also see in Sec.~\ref{ss2.1} that
$\eta_{trans}(t)$ always remains localized near the origin.)
\begin{proposition}\label{pr-exp-decay}
As $\mu\to0$, the propagating component satisfies the estimates
\begin{equation*}
    \norm{\eta_{prop}(t)}_1=O(1),\qquad
    \norm{\eta_{prop}'(t)}=O(1),
\end{equation*}
and the transient component satisfies the estimates
\begin{equation*}
    \norm{\eta_{trans}(t)}_1=O(e^{-\nu\lambda t}),\qquad
    \norm{\eta_{trans}'(t)}=O(\omega^{-1} e^{-\nu\lambda t}),
\end{equation*}
where $\nu$ is the constant in condition~\eqref{eq1.1d}.
\end{proposition}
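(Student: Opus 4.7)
The plan is to analyze the two components separately via the functional calculus of the self-adjoint operator $D$. The two ingredients I use throughout are: (i) from the computation in Remark~\ref{rem-2} one has $\norm{Dv}=O(1)$, while a direct change of variables yields $\norm{v}=\mu\norm{V}=O(\mu)$, so that $\norm{v}_1=O(1)$; and (ii) every bounded Borel function $h$ of $D$ commutes with $(1+D^2)^{s/2}$ in the functional calculus and satisfies $\norm{h(D)u}_s\le(\sup\abs{h})\norm{u}_s$.

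For the propagating part, unitarity of $e^{iDt}$ together with (ii) gives
\[
    \norm{\eta_{prop}(t)}_1\le C\norm{\widetilde g(D)v}_1\le C(\sup\abs{\widetilde g})\norm{v}_1=O(1),
\]
and differentiation, $\eta_{prop}'(t)=\sqrt{2\pi}\re(iDe^{iDt}\widetilde g(D))v$, then yields $\norm{\eta_{prop}'(t)}\le C(\sup\abs{\widetilde g})\norm{Dv}=O(1)$. Here $\sup\abs{\widetilde g}=\sup\abs{\widetilde g_0}$ is a $\lambda$-independent finite constant by \eqref{eq1.1d}, since $\widetilde g(\xi)=\widetilde g_0(\xi/\lambda)$.

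For the transient part, the key pointwise estimate is
\[
    \sup_{\xi}\abs{G(\xi,t)}\le\int_0^\infty\abs{g(\tau+t)}\,d\tau=\int_{\lambda t}^\infty\abs{g_0(s)}\,ds\le\frac{C_0}{\nu}\,e^{-\nu\lambda t},
\]
which together with (ii) gives $\norm{\eta_{trans}(t)}_1\le C\sup\abs{G(\cdot,t)}\norm{v}_1=O(e^{-\nu\lambda t})$. For the time derivative I differentiate under the integral sign and integrate by parts in~$\tau$ (using that $g(\tau+t)\to0$ as $\tau\to\infty$) to obtain
\[
    \partial_tG(\xi,t)=-g(t)+i\xi\,G(\xi,t),
\]
equivalently $\partial_tG(D,t)=-g(t)\,I+iD\,G(D,t)$, so that $\eta_{trans}'(t)=g(t)v+\im(D\,G(D,t))v$. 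The two terms are estimated separately: $\abs{g(t)}\norm{v}\le C\lambda\,e^{-\nu\lambda t}\cdot O(\mu)=O(\omega^{-1}e^{-\nu\lambda t})$, and $\norm{D\,G(D,t)v}\le\sup\abs{G(\cdot,t)}\norm{Dv}=O(e^{-\nu\lambda t})=O(\omega^{-1}e^{-\nu\lambda t})$, the last equality using that $\omega<\omega_0$ implies $\omega^{-1}$ is bounded below.

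The argument is largely bookkeeping rather than any deep obstacle. The single point requiring care is tracking the interaction of the $\lambda$- and $\mu$-scalings: the large factor $\lambda$ in $g(t)=\lambda g_0(\lambda t)$ is compensated precisely by the small factor $\mu$ in $\norm{v}=\mu\norm{V}$, producing the $\omega^{-1}\sim\lambda\mu$ prefactor. The integration by parts in $\tau$ is what distributes these scalings correctly between the boundary term (which sees $\norm{v}$) and the bulk term (which sees $\norm{Dv}$); attempting to bound $\norm{\eta_{trans}'}$ directly by differentiating under the integral, without the integration by parts, would leak an $O(\lambda)$ factor from $g'(t)=\lambda^2g_0'(\lambda t)$ that cannot be absorbed.
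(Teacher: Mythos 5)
Your proof is correct, and its skeleton is the same as the paper's: the spectral-calculus bounds \eqref{yes}, the scaling facts $\norm{v}=O(\mu)$, $\norm{Dv}=O(1)$, $\norm{v}_1=O(1)$, and a uniform exponential bound on $G(\xi,t)$. The differences are minor. For the propagating part you invoke unitarity of $e^{iDt}$ directly instead of passing, as the paper does, through the Cauchy data \eqref{Cauchy-trans} and the energy estimate \eqref{eestiHs}; these are the same fact in different clothing. For $\sup_\xi\abs{G(\xi,t)}$ you use the trivial $L^1$ bound where the paper quotes the sharper Lemma~\ref{l1.1} (whose decay in $\xi$ is not needed at this point). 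For $\eta_{trans}'$ you integrate by parts to get $\partial_tG(\xi,t)=-g(t)+i\xi G(\xi,t)$ and estimate the two pieces separately; this is correct, and it is a nice way of seeing explicitly where the boundary term $g(t)v$ produces the $\omega^{-1}$.

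One remark in your closing paragraph is, however, wrong: differentiating under the integral sign \emph{without} integrating by parts does not leak an unabsorbable $O(\lambda)$. Indeed, $\partial_tG(\xi,t)=\lambda\,\partial_tG_0(\xi/\lambda,\lambda t)$ with $\sup_\xi\abs{\partial_tG_0(\xi/\lambda,\lambda t)}\le C e^{-\nu\lambda t}$ by \eqref{eq1.1d}, so
\begin{equation*}
    \norm{\partial_tG(D,t)v}\le C\lambda e^{-\nu\lambda t}\norm{v}
    =O(\lambda\mu\,e^{-\nu\lambda t})=O(\omega^{-1}e^{-\nu\lambda t}),
\end{equation*}
the factor $\lambda$ being absorbed by $\norm{v}=O(\mu)$ exactly as in your boundary term. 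This is in fact the paper's own argument; your integration by parts is an alternative bookkeeping device, not a necessity.
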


\begin{proof}
We will estimate the transient part \eqref{eq2.5b} of the solution
directly and the propagating part \eqref{eq2.5a} via the Cauchy
data \eqref{Cauchy-trans} by using the energy estimates. Formulas
\eqref{eq2.5b} and \eqref{Cauchy-trans} involve the real and
imaginary parts of the operator $G(D,t)$ applied to the original
right-hand side source function $v$. (Recall that $\widetilde g(D)$ is a
special case of $G(D,t)$ for $t=0$.) Thus, we need to estimate the
operator $G(D,t)$. Note that, for an arbitrary bounded measurable
function $f(\xi)$, one has
\begin{equation}\label{yes}
    \norm{f(D)\colon H^0\to H^0}\le \sup_{\xi\in\mathbf{R}}\abs{f(\xi)},\quad
    \norm{f(D)\colon H^1\to H^1}\le C\sup_{\xi\in\mathbf{R}}\abs{f(\xi)}
\end{equation}
with some constant $C$ independent of $f$.\footnote{Indeed, the
first estimate is obvious, because the operator $D$ is self-adjoint
on $H^0=L_2(\mathbf{R}^2)$. To obtain the second estimate, we replace the
norm on $H^1$ by the equivalent Hilbert norm
$\norm{u}=(u,(1+L)u)^{1/2}$ (cf.~Remark~\ref{rem-3}); then the
operator $D$ becomes self-adjoint on $H^1$, and the second estimate
follows.} Thus, we need estimates for the function $G(\xi,t)$.
Since $g(\tau)=\lambda g_0(\lambda\tau)$, we have
\begin{equation}\label{G00}
 G(\xi,t)=G_0(\xi/\lambda,\lambda t),
\end{equation}
where
\begin{equation}\label{G0}
 G_0(\xi,t)=\int_0^\infty e^{-i\xi\tau}g_0(\tau+t)\, d\tau
\end{equation}
is the Fourier transform of the function
$\sqrt{2\pi}H(\tau)g_0(\tau+t)$ with respect to $\tau$. By
Lemma~\ref{l1.1} below, we have
\begin{equation*}
    \abs{\sqrt{2\pi}\widetilde g(\xi)}=\abs{G_0(\xi/\lambda,0)}\le C_{00}
\end{equation*}
and hence, by \eqref{Cauchy-trans} and \eqref{yes},
\begin{align*}
 \norm{u_0}_1&=\sqrt{2\pi}\norm{\re\widetilde g(D)v}_1\le C C_{00}\norm{v}_1=O(1),
\\
 \norm{u_1}&=\sqrt{2\pi}\norm{\im\widetilde g(D)Dv}\le C_{00}\norm{Dv}\le
 \widetilde C\norm{v}_1=O(1),
\end{align*}
because $v=V(x/\mu)$ and hence $\norm{v}_1=O(1)$ (cf.~the
computation in Remark~\ref{rem-2}). Now the energy estimates
\eqref{eestiHs} for $s=0$ give the desired estimates for
$\eta_{prop}(t)$. The estimates for the transient part go as
follows, again with the use of Lemma~\ref{l1.1}:
\begin{multline*}
    \norm{\eta_{trans}(t)}_1=\norm{\re G(D,t)v}_1
    \le C\sup_\xi\abs{G_0(\xi/\lambda,\lambda t)}\norm{v}_1
    \\\shoveright{\le
    CC_{00}e^{-\nu\lambda t}\norm{v}_1=O(e^{-\nu\lambda t}),}
\\
    \shoveleft{\norm{\eta_{trans}'(t)}=\norm{\re \pd{G}{t}(D,t)v}
    \le \sup_\xi\left\vert\lambda \pd{G_0}{t}\biggl(\frac\xi\lambda,\lambda
    t\biggr)\right\vert\norm{v}}
    \\\le
     C_{01}\lambda e^{-\nu\lambda t}\norm{v}=O(\mu\lambda e^{-\nu\lambda t})=O(\omega^{-1} e^{-\nu\lambda t}),
\end{multline*}
because $v=V(x/\mu)$ and hence $\norm{v}=O(\mu)$. This completes
the proof.
\end{proof}

The following lemma establishes the estimates for $G_0(\xi,t)$ used
in the proof given above and also estimates that will be useful
below.

\begin{lemma}\label{l1.1}
The function $G_0(\xi,t)$ satisfies the estimates
\begin{equation}\label{esti-g0}
    \left\vert\pd{^{m+k}G_0}{t^m\partial \xi^k}(\xi,t)\right\vert\le C_{km}e^{-\nu
    t}(1+\abs{\xi})^{-k-1},\qquad k,m=0,1,2,\dots\,,
\end{equation}
with some constants $C_{km}$. For $t=0$ and $m=0$, one has the
better estimates
\begin{equation}\label{esti-g00}
    \left\vert\pd{^kG_0}{\xi^k}(\xi,0)\right\vert\le C_{k0}(1+\abs{\xi})^{-k-2},\qquad
    k=0,1,2,\dots\,.
\end{equation}
\end{lemma}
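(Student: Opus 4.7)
My plan is to differentiate formula~\eqref{G0} under the integral sign---legitimate because all derivatives of $g_0$ are uniformly exponentially decaying---to obtain
\begin{equation*}
    \pd{^{m+k}G_0}{t^m\partial\xi^k}(\xi,t)
    =\int_0^\infty(-i\tau)^ke^{-i\xi\tau}g_0^{(m)}(\tau+t)\,d\tau,
\end{equation*}
and then to treat the ranges $\abs{\xi}\le1$ and $\abs{\xi}>1$ separately. On $\abs{\xi}\le1$ the crude majorant $\abs{g_0^{(m)}(\tau+t)}\le C_me^{-\nu(\tau+t)}$ yields a bound of order $e^{-\nu t}$, which already proves both \eqref{esti-g0} and \eqref{esti-g00} in this range, since the factors $(1+\abs{\xi})^{-k-1}$ and $(1+\abs{\xi})^{-k-2}$ are bounded away from zero there.

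For $\abs{\xi}>1$ I would integrate by parts $k+1$ times against $e^{-i\xi\tau}$. Setting $\phi(\tau)=(-i\tau)^kg_0^{(m)}(\tau+t)$ for brevity, the standard iterated identity gives
\begin{equation*}
    \pd{^{m+k}G_0}{t^m\partial\xi^k}(\xi,t)
    =\sum_{j=0}^{k}\frac{\phi^{(j)}(0)}{(i\xi)^{j+1}}
      +\frac1{(i\xi)^{k+1}}\int_0^\infty e^{-i\xi\tau}\phi^{(k+1)}(\tau)\,d\tau,
\end{equation*}
with no boundary contribution at $+\infty$ thanks to the exponential decay of the $g_0^{(\cdot)}$. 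By the Leibniz rule, the factor $\tau^k$ in $\phi$ forces $\phi^{(j)}(0)=0$ for $j<k$, while $\phi^{(k)}(0)=(-i)^kk!\,g_0^{(m)}(t)$ contributes an $O(e^{-\nu t}\abs{\xi}^{-k-1})$ term. The remainder is dominated by $\abs{\xi}^{-k-1}$ times a sum of integrals $\int_0^\infty\tau^je^{-\nu(\tau+t)}\,d\tau$ with $j\le k$, each of them $O(e^{-\nu t})$; so the remainder is also $O(e^{-\nu t}\abs{\xi}^{-k-1})$, and \eqref{esti-g0} follows.

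For the sharper estimate~\eqref{esti-g00} I would exploit the additional hypothesis $g_0(0)=0$ from~\eqref{eq1.1d}: at $t=0$, $m=0$ the previously surviving boundary term $\phi^{(k)}(0)=(-i)^kk!\,g_0(0)$ also vanishes, so one further integration by parts is admissible and produces a new boundary term $\phi^{(k+1)}(0)/(i\xi)^{k+2}$ together with a remainder, both of order $\abs{\xi}^{-k-2}$, yielding~\eqref{esti-g00}.

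Main obstacle: there is none of substance. The argument is a pure bookkeeping exercise in integration by parts; the only point requiring attention is verifying that the Leibniz expansions of $\phi^{(j)}$ at $\tau=0$ vanish exactly where claimed (for $j<k$ always, and also for $j=k$ in the special case $t=0$, $m=0$).
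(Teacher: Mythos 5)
Your proof is correct and follows essentially the same route as the paper: a crude bound for $\abs{\xi}\le1$ and repeated integration by parts in $\tau$ for $\abs{\xi}>1$, with the boundary terms at $\tau=0$ supplying the $\xi^{-k-1}$ decay and the hypothesis $g_0(0)=0$ upgrading it to $\xi^{-k-2}$ when $t=m=0$. The only (cosmetic) difference is the order of operations --- you differentiate in $\xi$ first and then integrate by parts $k+1$ times, using the Leibniz rule to kill the lower boundary terms, whereas the paper integrates by parts $N>k+1$ times first and then differentiates the resulting identity in $\xi$.
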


\begin{proof}
First, let us prove the estimates \eqref{esti-g0} and
\eqref{esti-g00} for $\abs{\xi}\le1$.  Then we have
\begin{equation*}
    \left\vert\pd{^{m+k}G_0}{t^m\partial \xi^k}(\xi,t)\right\vert
    =\left\vert\int_0^\infty (-i\tau)^ke^{-i\xi\tau}g_0^{(m)}(\tau+t)\, d\tau\right\vert
    \le C_m e^{-\nu t}\int_0^\infty\tau^ke^{-\nu\tau}\, d\tau
\end{equation*}
by \eqref{eq1.1d}, whence the claim follows. Now let $\abs{\xi}>1$.
Then we write
\begin{equation*}
    \pd{^mG_0}{t^m}(\xi,t)=
    \biggl(\frac i\xi\biggr)^N\int_0^\infty \biggl[\od{^N}{\tau^N}
    \bigl(e^{-i\xi\tau}\bigr)\biggr]g_0^{(m)}(\tau+t)\,
    d\tau
\end{equation*}
for some integer $N>k+1$ and then integrate by parts $N$ times,
thus obtaining
\begin{equation}\label{hmmm}
    \pd{^mG_0}{t^m}(\xi,t)=\sum_{l=1}^N(i\xi)^{-l}g_0^{(m+l-1)}(t)+(i\xi)^{-N}\int_0^\infty
    e^{-i\xi\tau}g_0^{(m+N)}(\tau+t)\, d\tau.
\end{equation}
Next, we differentiate both sides of \eqref{hmmm} $k$ times with
respect to $\xi$, which gives
\begin{multline*}
    \pd{^{m+k}G_0}{t^m\partial \xi^k}(\xi,t)
    =i^{-k}\sum_{l=1}^N\frac{(l+k-1)!}{(l-1)!}(i\xi)^{-l-k}g_0^{(m+l-1)}(t)
    \\
    {}+i^{-k}\sum_{s=0}^k\binom ks\frac{(l+s-1)!}{(l-1)!}(i\xi)^{-N-s}
    \int_0^\infty\tau^{k-s}
    e^{-i\xi\tau}g_0^{(m+N)}(\tau+t)\, d\tau.
\end{multline*}
Here all factors $g_0^{(m+l-1)}(t)$ and the integral are bounded in
modulus by $\const\,\boldsymbol\cdot\, e^{-\nu t}$ by virtue of~\eqref{eq1.1d},
and the smallest power of $\xi^{-1}$ on the right-hand side is
$\xi^{-k-1}$, which implies the estimate \eqref{esti-g0}. For $t=0$
and $m=0$, the smallest power of $\xi^{-1}$ on the right-hand side
is $\xi^{-k-2}$, since $g_0(0)=0$, and we have the
estimate~\eqref{esti-g00}.
\end{proof}

\section{Asymptotics of the solution}\label{s2}

In this section, we describe the asymptotics as $\mu\to0$ of the
solution $\eta(t)=\eta_{prop}(t)+\eta_{trans}(t)$ of problem
\eqref{eq1.1}, \eqref{eq1.2}. In all theorems in this section, we
assume that all conditions stated in Sec.~\ref{ss11} are satisfied.
Recall that the problem also contains the large parameter $\lambda$,
which is related to $\mu$ by the condition $\omega<\omega_0$ (see
\eqref{omega1}), where $\omega=c_0(\lambda\mu)^{-1}$ (see \eqref{omega}).
If $\omega$ can be treated as a second small parameter (i.e., the
distance traveled by the waves in the lifetime of the source is
much smaller than the source diameter), then additional Taylor
series expansions in $\omega$ lead to further simplifications in the
asymptotic formulas.

\subsection{Asymptotics of the transient component}\label{ss2.1}

The asymptotics of the transient component $\eta_{trans}(t)$ of the
solution as $\mu\to0$ is given by the following theorem.

\begin{theorem}\label{thm1}
One has
\begin{equation}\label{sol_tran}
    \eta_{trans}(x,t) = - \frac 1 {2\pi} \iint
\re G_0 (\omega|p|,\lambda t ) \widetilde V(p) e^{i p x/\mu} dp_1\,dp_2 + R(t),
\end{equation}
or, in the polar coordinates $(r,\varphi)$, $x = r \mathbf n(\varphi)$,
where $\mathbf n(\varphi)=(\cos\varphi,\sin\varphi)$,
\begin{multline}\label{sol_tran_polar}
\eta_{trans}(r \mathbf n (\varphi)) = - \frac 1 {2\pi}
\int_0^{2\pi}\!\!\!\! \int_0^\infty \rho \re G_0 (\omega\rho,\lambda t)
\\
\times \widetilde V\bigl(\rho \mathbf n(\psi)\bigr) e^{i r \rho
\cos(\psi - \varphi)/\mu} d\rho\, d\psi + R(t),
\end{multline}
where the remainder $R(t)$ satisfies the estimates
\begin{equation}\label{eq-as1b}
    \norm{R(t)}_1=O(\mu e^{-\nu\lambda t}),\qquad
    \norm{R'(t)}=O(\mu\omega^{-1} e^{-\nu\lambda t}),\qquad \mu\to0.
\end{equation}
\end{theorem}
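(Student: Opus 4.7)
The plan is to write $\eta_{trans}(t) = -\re G(D,t)v$ and replace $D = \sqrt{L}$ by its frozen-coefficient counterpart $D_0 = c_0\sqrt{-\Delta}$, where $c_0 = c(0)$; then the leading term becomes a Fourier multiplier applied to $v$, and the remainder $R(t) = -\re\bigl(G(D,t) - G(D_0,t)\bigr)v$ should be absorbable into the error. The heuristic justification is that the coefficients of $L - L_0 = -\langle\nabla,(c^2(x)-c_0^2)\nabla\rangle$ vanish at the origin, while $v(x) = V(x/\mu)$ is localized on scale $\mu$ about the origin, so the difference $L-L_0$ should contribute only an extra factor of $\mu$ relative to~$L$ itself.

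To pin down the main term I would compute $G(D_0,t)v$ explicitly via Fourier transform. Since $D_0$ has symbol $c_0|p|$, using $\widetilde v(p) = \mu^2\widetilde V(\mu p)$, the change of variables $p \mapsto p/\mu$, and the identity $G(\xi,t) = G_0(\xi/\lambda,\lambda t)$ together with $c_0/(\mu\lambda) = \omega$, one obtains
\begin{equation*}
G(D_0,t)v(x) = \frac{1}{2\pi}\iint G_0(\omega|p|,\lambda t)\widetilde V(p)e^{ipx/\mu}\,dp_1\,dp_2,
\end{equation*}
so that $-\re(\,\boldsymbol\cdot\,)$ produces exactly the explicit term in \eqref{sol_tran}; the polar form \eqref{sol_tran_polar} then follows by the substitution $p = \rho\mathbf n(\psi)$, $x = r\mathbf n(\varphi)$. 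It remains to bound $R(t)$, i.e., to prove
\begin{equation*}
\norm{(G(D,t)-G(D_0,t))v}_1 = O(\mu e^{-\nu\lambda t}),\qquad \norm{\partial_t(G(D,t)-G(D_0,t))v} = O(\mu\omega^{-1}e^{-\nu\lambda t}).
\end{equation*}
By Lemma \ref{l1.1} the symbol $G_0(\xi/\lambda,\lambda t)$ is, uniformly in~$t$, of order $-1$ in $\xi$ with seminorms bounded by $Ce^{-\nu\lambda t}$ (with an extra factor $\lambda$ for the $t$-derivative). Granted a general freezing-coefficients estimate of the form $\norm{(f(D)-f(D_0))V(\,\boldsymbol\cdot\,/\mu)}_s \le C\mu\,\Phi(f)$ for a suitable symbol seminorm $\Phi$ (to be proved in Sec.~\ref{s4}), combining this with $\norm{v}_1 = O(1)$, $\norm{v} = O(\mu)$, and $\mu\lambda = c_0/\omega$ yields the claimed bounds.

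The main obstacle is establishing this freezing-coefficients estimate for the self-adjoint operator $D = \sqrt{L}$. Heuristically $(L-L_0)V(\,\boldsymbol\cdot\,/\mu)$ is one power of $\mu$ smaller than $LV(\,\boldsymbol\cdot\,/\mu)$, since $c^2(x)-c_0^2 = O(|x|)$ supplies a factor $\mu$ on the support of $v$; but extending this from $L$ to its square root $D$ and then to arbitrary bounded symbols $f(D)$ requires genuine functional calculus. The standard route is to represent $f(D) = \frac{1}{\sqrt{2\pi}}\langle\widetilde f(\tau),e^{i\tau D}\,\boldsymbol\cdot\,\rangle$ and exploit the symbol calculus for products of the noncommuting operators of multiplication by $x$ and differentiation $-i\mu\nabla$, which is the machinery developed in Sec.~\ref{s4}. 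Theorem~\ref{thm1} will then be essentially a specialization of that general estimate to $f(\xi) = G_0(\xi/\lambda,\lambda t)$.
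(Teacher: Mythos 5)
Your proposal follows the paper's own proof essentially verbatim: the paper likewise replaces $D$ by the frozen operator $D^{(0)}=c_0\sqrt{-\Delta}$, reduces the remainder estimate to a freezing lemma (Lemma~\ref{l2.1}) whose proof is deferred to the noncommutative-analysis machinery of Sec.~\ref{s4}, and computes the main term exactly as you do via the Fourier multiplier $G_0(\omega|p|,\lambda t)$. The only cosmetic difference is that the paper handles the square root not through $e^{i\tau D}$ but by observing that $\re G_0(\xi,t)$ is even in $\xi$ and hence a smooth function of $\xi^2$, so everything reduces to functions of $L$ itself; this does not change the structure of the argument for the present theorem.
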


\begin{proof}
Consider the operators
\begin{equation}\label{freezeoper}
    L^{(0)}=-c_0^2\nabla^2,\qquad D^{(0)}=(L^{(0)})^{1/2}.
\end{equation}
Thus, $L^{(0)}$ is obtained by freezing the coefficients of the
operator $L$ at the origin, and $D^{(0)}$ is just the positive
square root of the positive self-adjoint operator $L^{(0)}$.
\begin{lemma}\label{l2.1}
One has
\begin{equation}\label{eql2.1}
    \bigl[\re G(D^{(0)},t)-\re G(D,t)\bigr] V\biggl(\frac x\mu\biggr)=R(t),
\end{equation}
where $R(t)$ satisfies the estimates \eqref{eq-as1b}.
\end{lemma}
The proof of this lemma will be given in Sec.~\ref{s4}. Thus, the
operator $D$ in the expression \eqref{eq2.5b} for
$\eta_{trans}(x,t)$ can be replaced by the operator $D^{(0)}$ with
constant coefficients. Now it remains to compute $\re
G(D^{(0)},t)V(x/\mu)$. Since $D^{(0)}$ is an operator with constant
coefficients and with symbol $c_0\abs{p}$, it follows that
\begin{equation}\label{mind-it}
    F(D^{(0)})={\mathcal{F}}^{-1}\circ F(c_0\abs{p})\circ{\mathcal{F}}
\end{equation}
for any function $F(\xi)$, where ${\mathcal{F}}$ is the Fourier transform and
the middle factor on the right-hand side is the operator of
multiplication by $F(c_0\abs{p})$. Thus we obtain
\begin{equation*}
    G(D^{(0)},t)V\biggl(\frac x\mu\biggr)=\frac 1 {2\pi} \iint
\re G_0(\omega|p|,\lambda t) \widetilde V(p) e^{i p x/\mu} dp_1\,dp_2
\end{equation*}
(we have used the formula for $G(\xi,t)$ and made obvious changes
of variables), which proves Theorem~\ref{thm1}.
\end{proof}

\subsection{Asymptotics of the equivalent source functions}

Now we proceed to the computation of the propagating component of
the solution. It satisfies the Cauchy problem \eqref{eq-trans},
\eqref{Cauchy-trans}, and so a good starting point would be to
compute the equivalent source functions \eqref{Cauchy-trans}. Once
we compute them (asymptotically) and prove that they are localized
near the origin, we can use the methods developed in
\cite{5,6,24,7,8,12} to obtain the asymptotics of the propagating
solution component. However, we would like to apply ready-to-use
formulas from these papers rather than to write out new formulas
based on the same ideas. The formulas in \cite{5,6,24,7,8,12} were
obtained for the case in which $u_1$, the initial data for the
$t$-derivative of the solution, is zero. So we resort to the
following trick.
\begin{proposition}
The propagating solution component  $\eta_{prop}(t)$ can be
represented in the form
\begin{equation}\label{eq2-13-0}
    \eta_{prop}(t)=\eta_1(t)+\eta_2'(t),
\end{equation}
where $\eta_1(t)$ and $\eta_2(t)$ are the solutions of the Cauchy
problems
\begin{align}
\label{eq2.13}
   \eta_1''(t)+D^2\eta_1&=0, \qquad
   \eta_1|_{t=0}=\sqrt{2\pi}\re\widetilde g(D)v,\quad\eta_1'|_{t=0}=0, \\
 \label{eq2.14}
   \eta_2''(t)+D^2\eta_2&=0, \qquad
   \eta_2|_{t=0}=\sqrt{2\pi}D^{-1}\im\widetilde g(D)v,\quad\eta_2'|_{t=0}=0.
\end{align}
\end{proposition}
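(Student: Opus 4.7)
The plan is to derive the decomposition directly from the general solution formula \eqref{eq-gsf} applied to $\eta_{prop}$, $\eta_1$, and $\eta_2$, and then to differentiate $\eta_2$ in time to match the two terms on the right-hand side of \eqref{eq2.5a}. Before doing so, I would pause to check that the initial datum $\sqrt{2\pi} D^{-1}\im\widetilde g(D)v$ for $\eta_2$ is actually a legitimate element of $H^1$ (and in fact $H^s$ for the relevant $s$), since $D$ is only nonnegative self-adjoint and $D^{-1}$ is unbounded at $0$. This is the one potentially sticky point, though it is benign: because $g$ is real-valued, $\widetilde g$ satisfies $\widetilde g(-\xi)=\overline{\widetilde g(\xi)}$, so $\im\widetilde g$ is an odd function of $\xi$. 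Combined with the smoothness and decay of $\widetilde g$ obtained from Lemma~\ref{l1.1} (with $t=0$, which gives the sharper estimate \eqref{esti-g00}), this makes $h(\xi):=\im\widetilde g(\xi)/\xi$ a bounded continuous function on $\mathbf{R}$, so $D^{-1}\im\widetilde g(D)=h(D)$ is a bounded operator on $H^0$ and on $H^1$ by \eqref{yes}.

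Once this is in place, the proof is a one-line check. From \eqref{eq-gsf} applied to problem \eqref{eq-trans}--\eqref{Cauchy-trans}, together with the formula \eqref{eq2.5a}, we have
\begin{equation*}
  \eta_{prop}(t)=\sqrt{2\pi}\cos(Dt)\,\re\widetilde g(D)v-\sqrt{2\pi}\sin(Dt)\,\im\widetilde g(D)v.
\end{equation*}
On the other hand, \eqref{eq-gsf} applied to \eqref{eq2.13} and \eqref{eq2.14} (both of which have zero velocity at $t=0$) yields
\begin{equation*}
  \eta_1(t)=\sqrt{2\pi}\cos(Dt)\,\re\widetilde g(D)v,\qquad
  \eta_2(t)=\sqrt{2\pi}\cos(Dt)D^{-1}\im\widetilde g(D)v.
\end{equation*}

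Differentiating the expression for $\eta_2(t)$ in $t$ using functional calculus gives
\begin{equation*}
  \eta_2'(t)=-\sqrt{2\pi}\,D\sin(Dt)\,D^{-1}\im\widetilde g(D)v=-\sqrt{2\pi}\sin(Dt)\,\im\widetilde g(D)v,
\end{equation*}
where in the last step I commute $D$ with the bounded functions $\sin(Dt)$ and $h(D)=D^{-1}\im\widetilde g(D)$ of $D$ (all three operators come from the functional calculus of the same self-adjoint $D$). Adding $\eta_1(t)$ and $\eta_2'(t)$ reproduces $\eta_{prop}(t)$ term by term, which proves \eqref{eq2-13-0}. The only real content of the argument is the well-definedness of $\eta_2(0)$; the rest is a bookkeeping exercise, so I do not expect any serious obstacle.
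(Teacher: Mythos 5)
Your proof is correct and follows essentially the same route as the paper's: both amount to checking that $\eta_1(t)+\eta_2'(t)$ solves the homogeneous equation with the initial data \eqref{Cauchy-trans}, the only cosmetic difference being that you write the solutions explicitly via \eqref{eq-gsf} and differentiate, while the paper verifies the initial conditions of the sum and invokes uniqueness of the abstract Cauchy problem. Your preliminary check that $D^{-1}\im\widetilde g(D)$ is a bounded operator (via the oddness of $\im\widetilde g$ and the estimates of Lemma~\ref{l1.1}) is a sensible addition; the paper addresses the same point only later, in Lemma~\ref{l44} and the definition of $f_2$ in \eqref{fofxi}.
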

\begin{proof}
The sum \eqref{eq2-13-0} obviously satisfies the wave equation
\eqref{eq-trans}. Next,
\begin{equation*}
    \eta_2'|_{t=0}=0, \qquad
    (\eta_2')'|_{t=0}=\eta_2''|_{t=0}=-D^2\eta_2|_{t=0}
    =-\sqrt{2\pi}D\im\widetilde g(D)v,
\end{equation*}
which shows that the initial conditions \eqref{Cauchy-trans} are
satisfied and hence completes the proof.
\end{proof}
\noindent Thus, let us compute the asymptotics of the new
equivalent source functions
\begin{equation}\label{eqsotraf0}
    \eta_{10}=\sqrt{2\pi}\re\widetilde g(D)v,\qquad \eta_{20}=\sqrt{2\pi}D^{-1}\im\widetilde
    g(D)v.
\end{equation}

\begin{theorem}\label{th-asf2}
The equivalent source functions \eqref{eqsotraf0} have the
following as\-ymp\-to\-tics as $\mu\to0$\textup:
\begin{equation}\label{eqsotraf-b}
    \eta_{10}=U_1\biggl(\frac x\mu\biggr)+R_1,
    \qquad \eta_{20}=U_2\biggl(\frac x\mu\biggr)+R_2,
\end{equation}
where the Fourier transforms of the functions $U_1(y)$ and $U_2(y)$
are given by the formulas
\begin{equation}\label{eqsotraf-c}
    \widetilde U_1(p) =\sqrt{2\pi}\re\widetilde g_0(\omega\abs{p})\widetilde
    V(p),\qquad
    \widetilde U_2(p) =\sqrt{2\pi} \lambda^{-1}
    \im\frac{\widetilde g_0(\omega\abs{p})}{\omega\abs{p}}\widetilde V(p)
\end{equation}
and the remainders satisfy the estimates
\begin{equation}\label{esti-z}
    \norm{R_1}_1=O(\mu),\qquad
    \norm{R_2}_2=O(\mu).
\end{equation}
\end{theorem}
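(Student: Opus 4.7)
The plan is to view both equivalent source functions as operator functions applied to $v$, freeze the coefficients of $D$ at the origin (which introduces an $O(\mu)$ Sobolev error via the noncommutative-analysis machinery of Section~\ref{s4}), and then evaluate the resulting constant-coefficient operator by Fourier transform, just as in the proof of Theorem~\ref{thm1}.

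More precisely, using $\widetilde g(\xi)=\widetilde g_0(\xi/\lambda)$, I write $\eta_{10}=f_1(D)v$ and $\eta_{20}=f_2(D)v$ with
\[
f_1(\xi)=\sqrt{2\pi}\re\widetilde g_0(\xi/\lambda),\qquad f_2(\xi)=\sqrt{2\pi}\,\xi^{-1}\im\widetilde g_0(\xi/\lambda).
\]
Lemma~\ref{l1.1} makes $f_1$ a bounded smooth symbol. The symbol $f_2$ is bounded and smooth as well: the reality of $g_0$ makes $\im\widetilde g_0(\cdot)$ odd, so $\im\widetilde g_0(0)=0$ removes the apparent singularity at $\xi=0$, and the sharper estimate \eqref{esti-g00} gives $f_2(\xi)=O(|\xi|^{-3})$ at infinity — an extra power of decay that will be used to obtain the $H^2$ bound for $R_2$.

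The central analytic step is a freeze-coefficients estimate of the form
\[
\norm{[f_i(D)-f_i(D^{(0)})]v}_{s_i}=O(\mu)\quad(s_1=1,\ s_2=2),
\]
established by the same commutator/functional-calculus techniques that drive Lemma~\ref{l2.1}; this is where the tools of Section~\ref{s4} come in, and I would simply invoke the relevant lemma stated there. Once $D$ is replaced by $D^{(0)}$, formula \eqref{mind-it} gives
\[
f_i(D^{(0)})v(x)=\frac1{2\pi}\iint f_i(c_0|p|)\widetilde v(p)\,e^{ipx}\,dp_1\,dp_2.
\]
Substituting $\widetilde v(p)=\mu^2\widetilde V(\mu p)$ and changing variables $p=q/\mu$ turns $c_0|p|/\lambda$ into $\omega|q|$ and lets me read off precisely the Fourier multipliers in \eqref{eqsotraf-c}, using the identity $(c_0|q|/\mu)^{-1}=\lambda^{-1}(\omega|q|)^{-1}$ for the $i=2$ case.

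The main obstacle is the $H^2$ version of the freeze-coefficients estimate: the $H^1$ analogue (sufficient for $R_1$) is essentially contained in Lemma~\ref{l2.1}, but the $H^2$ bound on $R_2$ crucially exploits the additional $\xi^{-1}$ decay of $f_2$, whose availability rests on the vanishing condition $g_0(0)=0$ from \eqref{eq1.1d}. Everything else in the argument is a routine change of variables.
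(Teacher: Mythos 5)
Your proposal is correct and follows essentially the same route as the paper: reduce to the freeze-coefficients estimate $\norm{[f_i(D)-f_i(D^{(0)})]v}_{s_i}=O(\mu)$ (this is exactly Lemma~\ref{l2.2}, whose proof is deferred to Sec.~\ref{s4}), then evaluate $f_i(D^{(0)})v$ by the Fourier transform and the rescaling $p=q/\mu$, and you correctly isolate the role of $g_0(0)=0$ in supplying the extra decay of $f_2$ needed for the $H^2$ bound on $R_2$. The only cosmetic difference is that the paper's Sec.~\ref{s4} actually proves the freeze-coefficients lemma for symbols regarded as smooth functions of $L=D^2$ (via Lemma~\ref{l44}) rather than of $D$ itself, since the commutator computations are cleaner for differential operators; at the level of this theorem that distinction is immaterial.
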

\begin{proof}
The proof goes along the same lines as that of Theorem~\ref{thm1}.
Namely, we prove that the operator $D$ in
formulas~\eqref{eqsotraf0} can asymptotically be replaced by
$D^{(0)}$ and then compute the Fourier transforms of $U_1$ and
$U_2$ using formula~\eqref{mind-it}. The latter computation is
trivial, and we omit it altogether. As for the first part, it is
given by the following lemma, which will be proved together with
Lemma~\ref{l2.1} in Sec.~\ref{s4}.
\begin{lemma}\label{l2.2}
One has
\begin{equation}\label{eql2.2}
\begin{aligned}
    &\sqrt{2\pi}\bigl[\re\widetilde g(D)-\re\widetilde g(D^{(0)})\bigr] V\biggl(\frac
    x\mu\biggr)=R_1,\\
    &\sqrt{2\pi}\bigl[D^{-1}\im\widetilde g(D)-(D^{(0)})^{-1}\im\widetilde g(D^{(0)})\bigr]
    V\biggl(\frac x\mu\biggr)=R_2,
\end{aligned}
\end{equation}
where $R_1$ and $R_2$ satisfy the estimates \eqref{esti-z}.
\end{lemma}
\noindent This completes the proof of Theorem~\ref{th-asf2}.
\end{proof}
\begin{remark}\label{rem-4}
If we replace $\eta_{10}$ and $\eta_{20}$ in the Cauchy problems
for $\eta_1$ and $\eta_2$ by $U_1(x/\mu)$ and $U_2(x/\mu)$,
respectively, then the resulting error $\delta(t)$ in the computation
of $\eta_{prop}(t)$ will satisfy the estimates
\begin{equation*}
    \norm{\delta(t)}_1=O(\mu),\qquad \norm{\delta'(t)}=O(\mu),\qquad
    \mu\to0,
\end{equation*}
uniformly on any finite time interval. Indeed, let us write
$\delta(t)=\delta_1(t)+\delta_2'(t)$, where $\delta_1$ and $\delta_2$ are the
errors in $\eta_1$ and $\eta_2$, respectively. Then, by virtue of
the energy estimates \eqref{eestiHs}, we have
\begin{align*}
    \norm{\delta_1(t)}_1+\norm{\delta_1'(t)}&\le C(t)\norm{\delta_1(0)}_1=C(t)\norm{R_1}_1=O(\mu),
    \\
    \norm{\delta_2'(t)}_1+\norm{\delta_2''(t)}&\le C(t)\norm{\delta_2''(0)}
    =C(t)\norm{D^2\delta_2(0)}\\&=C(t)\norm{D^2R_2}\le C_1(t)\norm{R_2}_2
    =O(\mu).
\end{align*}
Thus, the accuracy provided by Theorem~\ref{th-asf2} permits
computing the propagating part of the solution modulo $O(\mu)$ in
the energy norm.
\end{remark}

\subsection{Asymptotics of the propagating part}\label{s3}

Remark~\ref{rem-4} shows that, to compute the asymptotics of the
propagating part of the solution of problem \eqref{eq1.1} modulo
$O(\mu)$ in the energy norm, it suffices to solve problems
\eqref{eq2.13} and \eqref{eq2.14} asymptotically with the initial
data replaced by the functions $U_1(x/\mu)$ and $U_2(x/\mu)$
indicated in Theorem~\ref{th-asf2}. Thus, we need to solve the
problems
\begin{align}
\label{eq2.13-a}
   \eta_1''(t)+D^2\eta_1&=0, \qquad
   \eta_1|_{t=0}=U_1(x/\mu),\quad \eta_1'|_{t=0}=0, \\
 \label{eq2.14-a}
   \eta_2''(t)+D^2\eta_2&=0, \qquad
   \eta_2|_{t=0}=U_2(x/\mu),\quad \eta_2'|_{t=0}=0.
\end{align}
(We denote the new unknown functions by the same letters
$\eta_{1,2}$; this will not lead to a misunderstanding.) The
initial data in these problems are localized near the origin, and
hence the asymptotics of solutions of these problems modulo
$O(\mu)$ in all spaces $H^s$ can be obtained with the use of the
approach developed in \cite{5,7,8,24} and based on the Maslov
canonical operator \cite{13,14}. Let us briefly recall this
construction.

\subsubsection{Bicharacteristics, canonical operator and solution formulas}
In the phase space $\mathbf{R}^4_{x,p}$ with the coordinates
$(x,p)=(x_1,x_2,p_1,p_2)$, consider the Hamiltonian system
\begin{equation}\label{eq3.5}
\dot p=-\frac{\partial\mathcal{H}}{\partial x}, \qquad \dot
x=\frac{\partial \mathcal{H}}{\partial p}
\end{equation}
corresponding to the Hamiltonian function $\mathcal{H}=|p|c(x)$.
This system determines the Hamiltonian phase flow
$g_{\mathcal{H}}^t$. Let $\mathbf n(\psi)=
{}^t(\cos\psi,\sin\psi)$. Consider the Lagrangian manifold
$\Lambda_0=\{p=\mathbf n(\psi), x=\alpha\mathbf n(\psi)\},$
isomorphic to the two-dim\-en\-sion\-al cylinder, where $\psi\in[0,2\pi)$
and $\alpha\in\mathbf R$ are coordinates on $\Lambda_0$. By
shifting this manifold along the flow $g_{\mathcal{H}}^t,$ we
obtain the family of Lagrangian manifolds
$\Lambda_t=g_{\mathcal{H}}^t\Lambda_0$, each of which is equipped
with the same coordinate system $(\psi,\alpha)$ as $\Lambda_0$. We
take the point with coordinates $(\psi,\alpha)=(0,0)$ for the
distinguished point on $\Lambda_0$ and construct the Maslov
canonical operator $K^{h}_{\Lambda_t}$ \cite{13,14} on each of the
manifolds $\Lambda_t$. (Here $h\to0$ is the small parameter
occurring in the construction of the canonical operator; all
Jacobians in the definition of $K^{h}_{\Lambda_t}$ are taken with
respect to the coordinates $(\psi,\alpha)$.)

It follows from the results in \cite{7,8,24,5} that the asymptotics
of the solutions $\eta_{1,2}$ of problems \eqref{eq2.13-a} and
\eqref{eq2.14-a} can be obtained as follows. Using the Fourier
transforms \eqref{eqsotraf-c} of the equivalent source functions
computed in Theorem~\ref{th-asf2}, we introduce the following two
smooth functions on $\Lambda_t$, independent of $t$ and $\alpha$
but depending on the coordinate $\psi$ and an additional parameter
$\rho$:
\begin{align*}
\varphi_1(\psi,\rho)&=\widetilde U_1(\rho \mathbf
n(\psi))=\sqrt{2\pi}\re\widetilde g_0(\omega\rho)\widetilde V(\rho \mathbf n(\psi)), \\
\varphi_2(\psi,\rho)&=\widetilde U_2(\rho \mathbf
n(\psi))=\sqrt{2\pi}\lambda^{-1} \im\frac{\widetilde g_0(\omega\rho)}{\omega\rho}\widetilde
V(\rho\mathbf n(\psi)).
\end{align*}
Then the formulas in \cite{5,8} give
\begin{equation}
\eta_{1,2}(t)=\sqrt{\frac{\mu}{2\pi}}\operatorname{Re}\biggl(e^{-i\pi/4}\int_0^\infty
K^{\mu/\rho}_{\Lambda_t} (\sqrt\rho\,\varphi_{1,2}(\psi,
\rho))\,d\,\rho\biggr)+O(\mu). \label{eq3.6}
\end{equation}
Let us find the derivative $\eta_2'(t)$. By the commutation formula
\cite{13} for the canonical operator, we have
\begin{multline*}
\eta_2'(t)=\sqrt{\frac{\mu}{2\pi}}\operatorname{
Re}\biggl(e^{-i\pi/4}\int_0^\infty \frac{\partial}{\partial t}
K^{\mu/\rho}_{\Lambda_t} (\sqrt\rho\,\varphi_2(\psi, \rho))\,d
\rho\biggr)
 \\
 =\sqrt{\frac{\mu}{2\pi}}\operatorname{
Re}\Bigl(e^{-i\pi/4}\int_0^\infty K^{\mu/\rho}_{\Lambda_t}
\Big[-\frac{i\rho}\mu\mathcal{H}\big|_{\Lambda_t}\sqrt\rho\,\varphi_2(\psi,
\rho)\Big]\,d \rho\Bigr)+O(\mu).
\end{multline*}
But the Hamiltonian $\mathcal{H}$ is preserved along the
trajectories of the Hamiltonian system, and hence
$\mathcal{H}\big|_{\Lambda_t}=\mathcal{H}\big|_{\Lambda_0}=c(\alpha\mathbf
n(\psi))$. It was shown in~\cite{5} that, modulo lower-order terms,
one can set $\alpha=0$ in the functions on $\Lambda_t$. Taking into
account the definition of $\varphi_2$, we obtain
\begin{equation*}
\eta_2'(t)=\sqrt\mu\operatorname{
Re}\biggl(e^{-\frac{i\pi}4}\int_0^\infty K^{\mu/\rho}_{\Lambda_t}
\bigl[\sqrt\rho\bigl(-i\operatorname{Im}\widetilde g_0(\omega\rho) \widetilde
V(\rho\mathbf n(\psi))\bigr)\bigr]\,d\,\rho\biggr)+O(\mu).
\end{equation*}
Finally, we use the formula $\eta_{prop}(t)=\eta_1(t)+\eta_2'(t)$
and arrive at the following theorem.
\begin{theorem}\label{thm3}
The propagating part of the solution has the following
asymptotics\textup:
\begin{equation}\label{eq3.7}
\eta_{prop}(t)=\sqrt\mu\operatorname{
Re}\biggl(e^{-\frac{i\pi}4}\int_0^\infty K^{\mu/\rho}_{\Lambda_t}
\bigl[\sqrt\rho\,\overline{{\widetilde g}_0}\,\bigl(\omega\rho\bigr) \widetilde
V(\rho\mathbf n(\psi))\bigr]\,d\,\rho\biggr)+R(t),
\end{equation}
where the bar stands for complex conjugation and the remainder
satisfies the estimates
\begin{equation}\label{eq3.7a}
    \norm{R(t)}_1=O(\mu),\qquad \norm{R'(t)}=O(\mu)
\end{equation}
uniformly on any finite interval of time $t$.
\end{theorem}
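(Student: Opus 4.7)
The plan is to assemble the ingredients already prepared in Sections~\ref{ss11}--\ref{s3}. First, I would invoke the Proposition preceding Theorem~\ref{th-asf2}, which provides the decomposition $\eta_{prop}(t)=\eta_1(t)+\eta_2'(t)$, with $\eta_{1,2}$ the solutions of the homogeneous Cauchy problems \eqref{eq2.13}, \eqref{eq2.14}. By Theorem~\ref{th-asf2} together with Remark~\ref{rem-4}, replacing the initial data $\eta_{10}$, $\eta_{20}$ by the localized functions $U_1(x/\mu)$, $U_2(x/\mu)$ costs only an $O(\mu)$ error in the energy norm, so it suffices to solve problems \eqref{eq2.13-a} and \eqref{eq2.14-a} asymptotically modulo $O(\mu)$.

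Since the new initial data are concentrated near the origin, the next step is to apply the ready-to-use canonical-operator representation \eqref{eq3.6} borrowed from \cite{5,7,8,24}, giving
\[
\eta_{1,2}(t)=\sqrt{\frac{\mu}{2\pi}}\,\operatorname{Re}\Bigl(e^{-i\pi/4}\int_0^\infty K^{\mu/\rho}_{\Lambda_t}\bigl[\sqrt\rho\,\varphi_{1,2}(\psi,\rho)\bigr]\,d\rho\Bigr)+O(\mu),
\]
where $\varphi_{1,2}$ are read off from the Fourier transforms $\widetilde U_{1,2}$ in \eqref{eqsotraf-c}.

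To obtain $\eta_2'(t)$, I would differentiate this representation in $t$ under the integral sign and invoke the commutation formula of \cite{13}, which brings out a factor $-i\rho\mathcal{H}|_{\Lambda_t}/\mu$ inside the canonical operator, modulo lower-order terms. Because $\mathcal{H}=|p|c(x)$ is conserved along the Hamiltonian flow, $\mathcal{H}|_{\Lambda_t}=\mathcal{H}|_{\Lambda_0}=c(\alpha\mathbf n(\psi))$; then, using the argument of \cite{5} permitting one to set $\alpha=0$ to leading order, $\mathcal{H}|_{\Lambda_t}$ reduces to $c_0$. The emergent factor $-ic_0\rho/\mu$ combines with the $\lambda^{-1}/(\omega\rho)$ already built into $\varphi_2$ to produce exactly $-i$, by the definition $\omega=c_0/(\lambda\mu)$ from \eqref{omega}.

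Summing, the symbols inside the canonical operator add as
\[
\sqrt{2\pi}\,\operatorname{Re}\widetilde g_0(\omega\rho)\widetilde V(\rho\mathbf n(\psi))-i\sqrt{2\pi}\,\operatorname{Im}\widetilde g_0(\omega\rho)\widetilde V(\rho\mathbf n(\psi))=\sqrt{2\pi}\,\overline{\widetilde g_0}(\omega\rho)\widetilde V(\rho\mathbf n(\psi)),
\]
and the prefactors combine as $\sqrt{\mu/(2\pi)}\cdot\sqrt{2\pi}=\sqrt\mu$, yielding exactly \eqref{eq3.7}. The remainder \eqref{eq3.7a} will then follow by collecting the $O(\mu)$ contributions from Remark~\ref{rem-4}, from \eqref{eq3.6}, and from the commutation-formula error. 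The main obstacle I anticipate is the careful bookkeeping needed to ensure that differentiating $\eta_2$ in $t$ does not degrade the $O(\mu)$ estimate on $R'$ in the $L^2$ norm---in particular that the commutator's remainder is controlled in $L^2$ and not merely in $H^{-1}$. Once that is secured, the remaining work is essentially a substitution exercise within the canonical-operator calculus of \cite{13,14}.
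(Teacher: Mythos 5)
Your proposal reproduces the paper's own argument essentially verbatim: the decomposition $\eta_{prop}=\eta_1+\eta_2'$, the replacement of the initial data by $U_{1,2}(x/\mu)$ via Remark~\ref{rem-4}, the canonical-operator formula \eqref{eq3.6}, the commutation formula with $\mathcal{H}|_{\Lambda_t}=c(\alpha\mathbf n(\psi))$ reduced to $c_0$ by setting $\alpha=0$, and the cancellation $-ic_0\rho/(\mu\lambda\omega\rho)=-i$ yielding the symbol $\overline{\widetilde g_0}(\omega\rho)\widetilde V(\rho\mathbf n(\psi))$. The approach and all key steps match the paper's proof.
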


\subsubsection{Asymptotics near the front}

Now let us compute the propagating part \eqref{eq3.7} of the
solution in more explicit terms. To this end, we need some
geometry. Let $(P(t,\psi),X(t,\psi))$, $\psi\in[0.2\pi)$, be the
family of solutions of the Hamiltonian system \eqref{eq3.5} with
the initial conditions
\begin{equation}\label{4.1}
  p|_{t=0}=\mathbf n(\psi), \quad x|_{t=0}=0.
\end{equation}
For each $t,$ the equations $p=P(t,\psi)$, $x=X(t,\psi)$,
$\psi\in[0.2\pi)$, define a smooth closed curve $\Gamma_t$ in the
four-dimensional phase space $\mathbf{R}^4_{x,p}$; this curve is called
the \textit{wave front} in $\mathbf{R}^4_{x,p}$. The projection
$\gamma_t=\{x=X(t,\psi)\colon \psi\in[0.2\pi)\}$ of $\Gamma_t$ into
$\mathbf{R}^2_x$ is called the \textit{front in the configuration space}.
In contrast to $\Gamma_t$, the curve $\gamma_t$ may well be
nonsmooth; namely, it may have turning (or focal) points (in~this
case, $X_\psi=0$ for some $\psi$) and points of self-intersection.
Moreover, the front $\gamma_0$ at the initial time $t=0$ is just
the point $x=0$.

For each $t$, the function \eqref{eq3.7} is localized in a
neighborhood of the front $\gamma_t$ \cite{5,6,24,7,8,12}. Formula
\eqref{eq3.7} provides the global asymptotics of the propagating
part of the solution; i.e., this formula holds both near regular
and near focal points of the front. The formula can be simplified
in a neighborhood of any point of the front, but the simplified
expression depends on whether the point is regular or focal. Here
we restrict ourselves to the case of a neighborhood of a regular
point.

Take some time $t$ and angle $\psi^0$ and assume that the point
$X(t,\psi^0)\in\gamma_t$ is not focal; i.e., $X_\psi(t,\psi^0)\ne
0$. In some neighborhood of $X(t,\psi^0)$, we can introduce the
local coordinates $(\psi,y)$, where $y=y(x,t)$ is the (signed)
distance between the point $x$ and the front and $\psi=\psi(x,t)$
is determined by the condition that the vector $x-X(t,\psi(x,t))$
is orthogonal to the vector tangent to the wave front at the point
$X(t,\psi(x,t))$; in other words
\begin{align*}
 &\langle x-X(t,\psi(x,t)), X_\psi(t,\psi(x,t))\rangle=0.
\\
\intertext{Set}
  &S(x,t)=\langle P(t,\psi(x,t)),x-X(t,\psi(x,t))\rangle.
\end{align*}
Next, we introduce the \textit{Morse index} $m(t,\psi^0)$ of the
trajectory $X(\tau,\psi^0)$, $\tau\in(0,t]$, which is the number of
zeros of the function $|X_\psi(\tau,\psi^0)|$ on the half-open
interval $\tau\in(0,t]$ \cite{13}.

It may happen that some region of points $x$ where we intend to
write out the asymptotics simultaneously belongs to several
neighborhoods of the above-mentioned type, where the corresponding
points $X(t,\psi^0)$ lie on several (but finitely many!) distinct
arcs of the front $\gamma_t$. (For example, this is the case if we
study the asymptotics near a point of self-intersection of the
front $\gamma_t$.) Then all these arcs contribute to the asymptotics
at such points $x$, and we use an additional subscript $j$ to
distinguish these neighborhoods as well as all associated objects
($\psi^0$, $\psi(x,t)$, $S(x,t)$, Morse index, etc.). Now from the
results in \cite{5,6,7,12} we obtain the following theorem.
\begin{theorem}\label{p4.1}
In a neighborhood of the front $\gamma_t$ but outside a neighborhood
of the focal points, the asymptotic formula~\eqref{eq3.7} for the
propagating part of the solution can be rewritten in the form
\begin{multline}\label{4.2}
\eta_{prop}(t) = \sqrt\mu\re\sum_j\biggl[\frac{ e^{-i\pi
m(\psi^0_j,t)/2}}{\sqrt{|X_\psi(\psi,t)|}}
\sqrt{{\frac{c_0}{c(X(\psi,t))}}} F\biggl(\frac{S_j(x,t)}\mu,\psi\biggr)
\biggr]_{\psi=\psi_j(x,t)}\\{}+R(t),
\end{multline}
where
\begin{equation}\label{4.3}
F(z,\psi)=e^{-i\pi/4}\int_0^\infty \sqrt\rho\,\bar{\widetilde
g}_0(\omega\rho)\widetilde V(\rho\mathbf n(\psi))e^{iz\rho}d\rho,
\end{equation}
$R(t)$ satisfies the estimate \eqref{eq3.7a}, and the sum with
respect to $j$ is taken over all distinct arcs of $\gamma_t$
contributing to the asymptotics at $x$.\footnote{More formally, for
example, fix an $\varepsilon>0$; the intersection of $\gamma_t$ with the
$\varepsilon$-neighborhood of $x$ can be covered by finitely many arcs of
length $\le\varepsilon$; take the contributions of all these arcs.}
\end{theorem}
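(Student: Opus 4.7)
The plan is to take the global formula \eqref{eq3.7} and evaluate it pointwise near a regular point of the front $\gamma_t$ by substituting the local (non-caustic) representation of the Maslov canonical operator, after which the $\rho$-integral becomes exactly the function $F$ defined in \eqref{4.3}. The starting observation, which is taken from \cite{5}, is that the amplitudes $\varphi_{1,2}(\psi,\rho)$ do not depend on the coordinate $\alpha$ on $\Lambda_t$; this allows one to restrict the action of $K^{\mu/\rho}_{\Lambda_t}$ to the curve $\alpha=0$ up to an $O(\mu)$ error in the energy norm, exactly as was done for $\eta_2'$ just before Theorem~\ref{thm3}.

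Fix $x$ in an $\varepsilon$-neighborhood of a point $X(t,\psi^0)\in\gamma_t$ with $X_\psi(t,\psi^0)\neq 0$, so that we are away from focal points. Then near $\psi^0$ the projection $(\psi,\alpha)\mapsto X(t,\psi,\alpha)$ is a local diffeomorphism in a neighborhood of $\alpha=0$, and the Maslov canonical operator with parameter $h=\mu/\rho$ reduces on such a chart to its standard WKB form,
\begin{equation*}
K^{\mu/\rho}_{\Lambda_t}\bigl[\sqrt{\rho}\,\varphi(\psi,\rho)\bigr](x)
=\frac{e^{-i\pi m(\psi^0,t)/2}}{\sqrt{|J(t,\psi(x,t))|}}\,\sqrt{\rho}\,\varphi(\psi(x,t),\rho)\,e^{i\rho S(x,t)/\mu}+O(\mu/\rho),
\end{equation*}
where $J$ is the Jacobian of the projection at $\alpha=0$, $m$ is the Morse index of the trajectory on $(0,t]$, and $S$ and $\psi(x,t)$ are defined exactly as in the statement (the phase $S$ is the action integral along the ray hitting $\gamma_t$ orthogonally and reaching $x$). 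The key calculation is to identify $|J|$: using that $\mathcal H=|p|c(x)$ is conserved along the flow and equals $c_0$ on the ray $\alpha=0$, together with $\dot X=c(X)P/|P|$, one obtains $|J|=|X_\psi|\,c(X)/c_0$ at $\alpha=0$, so that $|J|^{-1/2}=\sqrt{c_0/c(X)}\,|X_\psi|^{-1/2}$, which is precisely the amplitude appearing in \eqref{4.2}.

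Substituting the local representation into \eqref{eq3.7} and pulling the $x$-dependent factors outside the $\rho$-integral gives the single-arc version of \eqref{4.2}, because the remaining integral coincides with $e^{i\pi/4}F(S(x,t)/\mu,\psi(x,t))$ by the definition \eqref{4.3}; the two phase factors $e^{\pm i\pi/4}$ cancel, and the remaining $(2\pi)^{-1/2}$ is absorbed into the standard normalization of the canonical operator on the regular chart (which contributes a $\sqrt{2\pi}$ when the trivial stationary phase in the $\alpha$-direction is performed). If the $\varepsilon$-neighborhood of $x$ meets $\gamma_t$ along several disjoint regular arcs, a partition of unity in $\psi$ produces one such term per arc, giving the sum over $j$ in \eqref{4.2}; the $O(\mu)$ estimate \eqref{eq3.7a} is preserved.

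The main technical obstacle is the Jacobian computation and the precise identification of the factor $\sqrt{c_0/c(X)}$, which requires tracking how the transverse coordinate $\alpha$ evolves along the Hamiltonian flow and verifying that, on the central ray $\alpha=0$, the variation $X_\alpha$ is transverse to $X_\psi$ with length $c(X)/c_0$. Once this geometric fact is in place, the rest of the proof is bookkeeping of Morse indices, cancellations of the $e^{\pm i\pi/4}$ phases, and a standard partition-of-unity argument to pass from one regular arc to the finite sum in \eqref{4.2}.
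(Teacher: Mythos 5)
Your proposal is correct and follows essentially the same route as the paper, which gives no independent proof of this theorem but simply invokes the results of \cite{5,6,7,12}: those references contain exactly the regular-chart evaluation of the canonical operator, the Jacobian factorization $|J|=|X_\psi|\,c(X)/c_0$ at $\alpha=0$ yielding the Green-law factor, the Morse-index bookkeeping, and the cancellation of the $e^{\pm i\pi/4}$ phases that turn \eqref{eq3.7} into \eqref{4.2}. The only slight wrinkle is your remark about absorbing a leftover $(2\pi)^{-1/2}$ --- there is none to absorb, since the $\sqrt{2\pi}$ in the amplitudes $\varphi_{1,2}$ has already cancelled the $(2\pi)^{-1/2}$ of \eqref{eq3.6} in passing to \eqref{eq3.7} --- but this does not affect the result.
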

\begin{remark}\label{rem-1}
The factor
\begin{equation*}
  \frac1{\sqrt{|X_\psi(\psi,t)|}} \sqrt{{\frac{c_0}{c(X(\psi,t))}}}
\end{equation*}
includes the two-dimensional analog of the so-called Green law and
the trajectory divergence related to the velocity $c(x)$ (with
height $c^2(x)$ describing the bottom topography). The function $F$
depends on the time and space shape of the source generating the
waves \cite{5,7,6,12,8,24}. Formulas \eqref{eq3.7}, \eqref{4.2},
and \eqref{4.3} apply to any localized perturbation.
\end{remark}

\section{Obtaining
asymptotic expansions by noncommutative analysis}\label{s4}

The aim of this section is to prove Lemmas~\ref{l2.1}
and~\ref{l2.2}. Vaguely speaking, these lemmas state that the
replacement of the operator $D$ by the operator $D^{(0)}$
\eqref{freezeoper} with constant coefficients in certain
expressions results in an $O(\mu)$ error. However, it is much
easier to deal with functions of the differential operators $L$ and
$L^{(0)}$ than with functions of their square roots, the
pseudodifferential operators $D=\sqrt{L}$ and
$D^{(0)}=\sqrt{L^{(0)}}$. Hence in Sec.~\ref{sec-prelim} we
represent the latter functions via the former and accordingly
restate the lemmas. In Sec.~\ref{ss21}, we make all noncommutative
computations.

\subsection{Eliminating the square roots}\label{sec-prelim}

\begin{lemma}\label{l44}
The functions $\re G_0(\xi,t)$ and $\xi^{-1}\im G_0(\xi,t)$ are
smooth even functions of $\xi$ and hence smooth functions of
$\xi^2$.
\end{lemma}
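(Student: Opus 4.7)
The claim has two ingredients: (i) the two functions $\re G_0(\xi,t)$ and $\xi^{-1}\im G_0(\xi,t)$ are smooth and even in $\xi$, and (ii) any smooth even function of a real variable is a smooth function of its square. Step (ii) is the classical theorem of Whitney, which I would quote. So the plan is to focus on (i) and read off parity and smoothness directly from the integral representation of $G_0$.

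First I would split $G_0$ into real and imaginary parts:
\begin{equation*}
 \re G_0(\xi,t)=\int_0^\infty \cos(\xi\tau)\,g_0(\tau+t)\,d\tau,\qquad
 \im G_0(\xi,t)=-\int_0^\infty \sin(\xi\tau)\,g_0(\tau+t)\,d\tau.
\end{equation*}
Parity is then immediate: $\cos$ is even in $\xi$, so $\re G_0(\cdot,t)$ is even, and $\sin$ is odd, so $\im G_0(\cdot,t)$ is odd and hence $\xi^{-1}\im G_0(\cdot,t)$ is even (after noting that the apparent singularity at $\xi=0$ is removable, see below).

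For smoothness of $\re G_0$ in $\xi$, I would differentiate $k$ times under the integral sign, producing the integrand $\tau^k\cos(\xi\tau\pm k\pi/2)g_0(\tau+t)$, which is dominated in absolute value by $\tau^k |g_0(\tau+t)|\le C_0 \tau^k e^{-\nu(\tau+t)}$ by \eqref{eq1.1d}. The dominating function is integrable on $[0,\infty)$ uniformly in $\xi$ on any compact set, so differentiation under the integral is justified and $\re G_0(\cdot,t)\in C^\infty(\mathbf{R})$. For $\xi^{-1}\im G_0$ I would use the elementary identity
\begin{equation*}
 \frac{\sin(\xi\tau)}{\xi}=\int_0^\tau \cos(\xi s)\,ds,
\end{equation*}
which makes the integrand jointly smooth in $\xi$ and $s$ with no singularity at $\xi=0$. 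Swapping the order of integration (justified by Fubini and the exponential decay of $g_0$) gives
\begin{equation*}
 \xi^{-1}\im G_0(\xi,t)=-\int_0^\infty \cos(\xi s)\,h(s,t)\,ds,\qquad
 h(s,t):=\int_s^\infty g_0(\tau+t)\,d\tau,
\end{equation*}
and $|h(s,t)|\le (C_0/\nu)e^{-\nu(s+t)}$ by \eqref{eq1.1d}. The same differentiation-under-the-integral argument as before shows $\xi^{-1}\im G_0(\cdot,t)\in C^\infty(\mathbf{R})$.

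Finally, I would invoke Whitney's theorem that an even function $f\in C^\infty(\mathbf{R})$ can be written as $f(\xi)=\widetilde f(\xi^2)$ with $\widetilde f\in C^\infty(\mathbf{R})$, applied separately to $\re G_0(\xi,t)$ and $\xi^{-1}\im G_0(\xi,t)$ (with $t$ regarded as a parameter). There is no real obstacle here; the only point that might look awkward is the removable singularity in $\xi^{-1}\im G_0$, which is why I would introduce the $\int_0^\tau \cos(\xi s)\,ds$ representation rather than try to expand $\sin(\xi\tau)/\xi$ in a power series and estimate the tail.
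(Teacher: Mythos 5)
Your proposal is correct and follows essentially the same route as the paper: the parity of $\re G_0$ and $\im G_0$ in $\xi$ is read off from the real-valuedness of $g_0$ (the paper phrases this as $\overline{G_0(\xi,t)}=G_0(-\xi,t)$, you as the explicit cosine/sine decomposition, which is the same fact), and the conclusion then follows from the standard result on even smooth functions. The only difference is that you spell out the smoothness, the removable singularity at $\xi=0$, and the appeal to Whitney's theorem, all of which the paper leaves implicit in ``hence the desired claim follows.''
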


\begin{proof}
The function $g_0(\tau)$ is real-valued, and
$\overline{G_0(\xi,t)}=G_0(-\xi,t)$ by~\eqref{G0}. Thus, $\re
G_0(-\xi,t)=\re G_0(\xi,t)$ and $\im G_0(-\xi,t)=-\im G_0(\xi,t)$;
i.e., the real part of $G_0$ is an even function of $\xi$, and the
imaginary part of $G_0$ is an odd function of $\xi$. Hence the
desired claim follows.
\end{proof}

Now let us introduce the functions
\begin{equation}\label{fofxi}
\begin{gathered}
    f_1(\xi)=\re G_0(\xi^{1/2},0),\qquad
    f_2(\xi)=\xi^{-1/2}\im G_0(\xi^{1/2},0),\\
    f_3(\xi,t)=\re G_0(\xi^{1/2},t).
\end{gathered}
\end{equation}
By Lemma~\ref{l44}, these functions are smooth for all $\xi$,
including $\xi=0$. Formulas \eqref{eqsotraf0} for the equivalent
sources and \eqref{eq2.5b} for the transient solution component can
now be rewritten as
\begin{equation}\label{eqsotraf}
\begin{gathered}
    \eta_{10}=f_1(\lambda^{-2}L)V\biggl(\frac x\mu\biggr),
    \qquad \eta_{20}=\lambda^{-1}f_2(\lambda^{-2}L)V\biggl(\frac x\mu\biggr),
    \\
    \eta_{trans}(t)=-f_3(\lambda^{-2}L,\lambda t)V\biggl(\frac x\mu\biggr).
\end{gathered}
\end{equation}
Indeed, for example,
\begin{multline*}
    \lambda^{-1}f_2(\lambda^{-2}L)=\lambda^{-1}(\lambda^{-2}L)^{-1/2}\im
    G_0\bigl((\lambda^{-2}L)^{1/2},0\bigr)\\=D^{-1}\im G_0(\lambda^{-1}D,0)
    =D^{-1}\im G(D,0)=\sqrt{2\pi}D^{-1}\im \widetilde g(D).
\end{multline*}
The following theorem is an equivalent restatement of
Lemmas~\ref{l2.1} and~\ref{l2.2} in terms of functions of $L$ and
$L^{(0)}$. (We write $R_3(t)=-R(t)$ to unify the notation.)
\begin{theorem}\label{th-asf}
One has
\begin{equation}\label{eqsotraf-a}
\begin{aligned}
    f_1(\lambda^{-2}L)V\biggl(\frac x\mu\biggr)&=f_1(\lambda^{-2}L^{(0)})V\biggl(\frac x\mu\biggr)+R_1,
    \\
    \lambda^{-1}f_2(\lambda^{-2}L)V\biggl(\frac x\mu\biggr)&=\lambda^{-1}f_2(\lambda^{-2}L^{(0)})V\biggl(\frac x\mu\biggr)+R_2,
    \\
    f_3(\lambda^{-2}L,\lambda t)V\biggl(\frac x\mu\biggr)&=f_3(\lambda^{-2}L^{(0)},\lambda t)V\biggl(\frac x\mu\biggr)+R_3(t),
\end{aligned}
\end{equation}
where the remainders satisfy the estimate
\begin{equation}\label{esti-a}
\begin{gathered}
    \norm{R_1}_1=O(\mu),\qquad
    \norm{R_2}_2=O(\mu),\\
    \norm{R_3(t)}_1=O(\mu e^{-\nu\lambda t}),\qquad
    \norm{R_3'(t)}=O(\mu^2\lambda e^{-\nu\lambda t}).
\end{gathered}
\end{equation}
\end{theorem}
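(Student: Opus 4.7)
The plan is to apply the key identity of noncommutative analysis: for a sufficiently regular scalar function $f$ and self-adjoint operators $A,B$,
\begin{equation*}
    f(A) - f(B) = \delta f\bigl[\overset{2}{A},\overset{1}{B}\bigr]\,(A-B),
\end{equation*}
where $\delta f[\xi,\eta]=(f(\xi)-f(\eta))/(\xi-\eta)$ (smoothly extended to the diagonal) and the Feynman numbering fixes the ordering of the noncommuting $A,B$. The operator on the right is realized through the double Fourier representation
\begin{equation*}
    \delta f\bigl[\overset{2}{A},\overset{1}{B}\bigr]=\frac1{2\pi}\iint\widehat{\delta f}(\sigma,\tau)\,e^{i\sigma A}e^{i\tau B}\,d\sigma\,d\tau,
\end{equation*}
which makes sense for unbounded self-adjoint $A,B$ because $e^{i\sigma A}$ and $e^{i\tau B}$ are unitary groups on $L^2$. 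I would apply this with $A=\tilde L=\lambda^{-2}L$ and $B=\tilde L^{(0)}=\lambda^{-2}L^{(0)}$, taking $f$ to be $f_1(\xi)$, $f_2(\xi)$, or $f_3(\xi,\lambda t)$ in turn (with $\lambda t$ inert as a parameter); each remainder in~\eqref{eqsotraf-a} then becomes an $O(1)$ scalar prefactor (a sign, or $\lambda^{-1}$ for $R_2$) times $\delta f_j\bigl[\overset{2}{A},\overset{1}{B}\bigr]\,(A-B)V(x/\mu)$.

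Part one is a direct scaling estimate of the factor $(A-B)V(x/\mu)$. Using $L-L^{(0)}=-\nabla\cdot((c^2(x)-c_0^2)\nabla)$, the Taylor bound $|c^2(x)-c_0^2|\le M|x|$ near the origin together with global boundedness (from the stabilization hypothesis on $c$), the scaling $\nabla V(x/\mu)=\mu^{-1}(\nabla V)(x/\mu)$, the constraint $\lambda\mu\ge\const$, and the decay condition~\eqref{eq1.1c} on $V$, one verifies by changing variables $y=x/\mu$ that
\begin{equation*}
    \norm{(A-B)V(x/\mu)}_s=O(\mu^{2-s}),\qquad s=0,1,2.
\end{equation*}
Part two is a uniform boundedness estimate: $\delta f_j\bigl[\overset{2}{A},\overset{1}{B}\bigr]$ is bounded on each $H^s$ ($s=0,1,2$) with norm $O(1)$ as $\mu\to0$. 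On $L^2$ this is immediate from the Fourier representation, which gives $\norm{\delta f_j[\,\ldots\,]\colon H^0\to H^0}\le\norm{\widehat{\delta f_j}}_{L^1}$; the same estimate on $H^s$ follows by replacing the Sobolev norm with the equivalent one $u\mapsto\norm{(1+L)^{s/2}u}_0$ (cf.\ Remark~\ref{rem-3}), since $e^{i\sigma A}$ and $e^{i\tau B}$ remain isometric in that equivalent norm. Combining the two parts yields~\eqref{esti-a}. The exponential factor $e^{-\nu\lambda t}$ in the bounds on $R_3(t)$ and the extra factor $\lambda$ in the bound on $R_3'(t)$ come directly from Lemma~\ref{l1.1} applied to $f_3(\xi,\lambda t)$ and its $t$-derivative (differentiation in $t$ brings down the factor $\lambda$ by the chain rule, while the exponential decay survives).

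The hard part will be Part two: defining the Feynman-ordered operator $\delta f_j\bigl[\overset{2}{A},\overset{1}{B}\bigr]$ rigorously and bounding it in the appropriate Sobolev norms with constants uniform in $\mu$. The crucial technical input is the $L^1$ integrability of the two-dimensional Fourier transform $\widehat{\delta f_j}$: this requires the smoothness of $\delta f_j[\xi,\eta]$ (including across the diagonal, ensured by the smoothness of $f_j$ via Lemma~\ref{l44}) together with sufficient decay of $\delta f_j$ and its partial derivatives in both arguments, inherited from the estimates of Lemma~\ref{l1.1}. Once this integrability is in hand, combining the $O(\mu^{2-s})$ commutator bound with the $O(1)$ operator bound in the appropriate norm produces the claimed $O(\mu)$ and $O(\mu e^{-\nu\lambda t})$ remainders.
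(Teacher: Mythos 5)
Your overall strategy---reduce to the Newton formula for $f(A)-f(B)$ and then estimate ``the perturbation applied to $V$'' and the difference-quotient operator separately---is the right starting point and is the same as the paper's, but there is a genuine gap in the operator ordering, and it sits exactly where the real work of the paper's proof lies. The Newton formula of noncommutative analysis places the factor $A-B$ \emph{between} the two arguments of the difference quotient: $f(A)-f(B)=\frac{\delta f}{\delta\xi}(\overset{3}{A},\overset{1}{B})\,\overset{2}{\overline{(A-B)}}$, whose Fourier--simplex realization is $\int\widetilde{f'}(p)\int_{\Delta_1}e^{ip\theta_1A}\,(A-B)\,e^{ip\theta_2B}\,d\theta\,dp$, with $A-B$ sandwiched in the middle. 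Your two-part scheme (first compute $W=(A-B)V$, then apply a uniformly bounded operator to $W$) implicitly gives $A-B$ the \emph{smallest} Feynman index, i.e.\ realizes the right-hand side as $\iint\widehat{\delta f}(\sigma,\tau)\,e^{i\sigma A}e^{i\tau B}\,(A-B)V\,d\sigma\,d\tau$. That identity is false for noncommuting $A,B$: already for $f(\xi)=\xi^2$, where $\delta f(\xi,\eta)=\xi+\eta$, your ordering yields $A(A-B)+B(A-B)=A^2-B^2+[B,A]$ rather than $A^2-B^2$. The missing step is the permutation formula (the paper's Proposition~\ref{p33}): moving $T=A-B$ to the rightmost position costs an extra term $\frac{\delta^2f}{\delta\xi^2}(\overset{4}{A},\overset{3}{B},\overset{1}{B})\,\overset{2}{\overline{[T,B]}}$, and one must then verify (Proposition~\ref{p34}) that $[T,L_y^{(0)}]=\mu T_1$ with $T_1$ bounded from $H^s$ to $H^{s-3}$ uniformly in $\mu$, so that this correction is also $O(\mu)$. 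Without this, your estimate of $\norm{(A-B)V}$ cannot legitimately be composed with the operator bound, and the argument does not close.

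Two further, smaller points. First, the two-dimensional Fourier transform $\widehat{\delta f_j}$ is not an $L^1$ function but a measure concentrated on the segment $\{(\theta p,(1-\theta)p):\theta\in[0,1]\}$; the clean route to the uniform $H^s$ bound is the simplex representation $\frac{\delta f}{\delta\xi}(\xi_1,\xi_2)=\int_{\Delta_1}f'(\theta_1\xi_1+\theta_2\xi_2)\,d\theta$, which reduces everything to $\widetilde{f_j'}\in L^1(\mathbf{R})$ (and $\widetilde{f_j''}\in L^1(\mathbf{R})$ for the second difference quotient), guaranteed by Lemma~\ref{l1.3}. Second, the uniformity in $\mu$ of the $H^s\to H^s$ bounds for $e^{itL_y}$ requires checking that the constants in the equivalence of $\norm{\,\boldsymbol\cdot\,}_{H^s}$ with $\norm{(1+L_y)^{s/2}\,\boldsymbol\cdot\,}$ stay bounded as $\mu\to0$ (the paper's Lemma~\ref{l6}); you assert the isometry in the equivalent norm but not this uniformity. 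On the positive side, your Part one---the scaling estimate $\norm{(A-B)V(x/\mu)}_s=O(\mu^{2-s})$ via the Taylor bound on $c^2(x)-c_0^2$ and the decay of $V$---is correct and matches the paper's Proposition~\ref{p35} combined with the rescaling~\eqref{ququ}, and your account of where the factors $e^{-\nu\lambda t}$ and $\lambda$ in the $R_3$, $R_3'$ estimates come from is consistent with the paper's treatment via $f_4=\partial f_3/\partial t$.
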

The proof will be given below in Sec.~\ref{ss21}.

We need some estimates for the symbols \eqref{fofxi}. These are
provided by the following lemma.
\begin{lemma}\label{l1.3}
The following estimates hold for the functions \eqref{fofxi}\textup:
\begin{equation}\label{esti-fj}
\begin{gathered}
    \abs{f_1^{(k)}(\xi)} \le C_{k0}(1+\abs{\xi})^{-1-k},\qquad
    \abs{f_2^{(k)}(\xi)} \le C_{km}(1+\abs{\xi})^{-3/2-k},\\
    \left\vert \pd{^{k+m}f_3(\xi,t)}{x^k\partial t^m}\right\vert
     \le C_{km}e^{-\nu t}(1+\abs{\xi})^{-1/2-k},
\end{gathered}
\end{equation}
$k=0,1,2,\dotsc$, where the $C_{km}$ are some constants \textup(in
general, different from those introduced earlier\textup).
\end{lemma}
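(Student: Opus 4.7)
The plan is to combine two ingredients: (1) the smoothness of $f_1$, $f_2$, $f_3$ at $\xi=0$ guaranteed by Lemma~\ref{l44}, which takes care of $|\xi|\le1$; and (2) a chain-rule calculation for $|\xi|\ge1$ that reduces the claim to Lemma~\ref{l1.1}. First I would observe that Lemma~\ref{l44} says $\re G_0(\eta,t)$ and $\eta^{-1}\im G_0(\eta,0)$ are smooth \emph{even} functions of $\eta$, hence smooth functions of $\eta^2=\xi$. Consequently, $f_1(\xi)$, $f_2(\xi)$, and $f_3(\xi,t)$ extend to smooth functions on all of $\mathbf{R}$. On the compact range $|\xi|\le1$ they and all their derivatives are therefore bounded; in the $f_3$ case the bound can be taken proportional to $e^{-\nu t}$ by differentiating under the integral in \eqref{G0} and using~\eqref{eq1.1d} directly.

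Second, for $\xi\ge1$ I would introduce $\eta=\sqrt{\xi}$ and prove by induction on $k$ the Fa\`a di Bruno--style identity
\begin{equation*}
\frac{d^k}{d\xi^k}h(\sqrt\xi)=\sum_{j=1}^{k}c_{kj}\,\eta^{-(2k-j)}h^{(j)}(\eta),
\end{equation*}
valid for any smooth $h$, with explicit rational constants $c_{kj}$. Applied with $h(\eta)=\re G_0(\eta,t)$, the estimate $|G_0^{(j)}(\eta,t)|\le C_j e^{-\nu t}(1+\eta)^{-j-1}$ from \eqref{esti-g0} gives each term the bound $e^{-\nu t}\eta^{-(2k-j)}\eta^{-(j+1)}=e^{-\nu t}\eta^{-2k-1}=e^{-\nu t}\xi^{-k-1/2}$, which is exactly the $f_3$ decay rate. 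Since $t$-differentiation commutes with the substitution $\eta=\sqrt\xi$, additional $t$-derivatives only apply to $G_0$ itself and again produce the $e^{-\nu t}$ factor, yielding the mixed bound for $\partial^{k+m}_{\xi^k t^m}f_3$.

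For $f_1$, the same expansion with $h(\eta)=\re G_0(\eta,0)$ combined with the \emph{improved} bound $|G_0^{(j)}(\eta,0)|\le C_j(1+\eta)^{-j-2}$ from \eqref{esti-g00} produces $\eta^{-(2k-j)}\eta^{-(j+2)}=\eta^{-2k-2}=\xi^{-k-1}$. For $f_2$ I would first apply Leibniz to $f_2(\xi)=\xi^{-1/2}F_H(\xi)$ with $F_H(\xi)=\im G_0(\sqrt\xi,0)$, then use the above expansion on $F_H$ with the improved bound: the factor $(\xi^{-1/2})^{(k-\ell)}=O(\xi^{-1/2-(k-\ell)})$ combined with $|F_H^{(\ell)}(\xi)|=O(\xi^{-\ell-1})$ gives exactly $\xi^{-k-3/2}$. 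Combining the decay bounds for $|\xi|\ge1$ with the boundedness on $|\xi|\le1$ yields the claimed $(1+|\xi|)^{-\cdots}$ estimates.

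The only real obstacle is the bookkeeping in the chain-rule identity; the key point to check is that every one of the $k$ terms in the sum contributes the \emph{same} total power of $\eta$ (namely $\eta^{-2k-1}$ when the base estimate is $(1+\eta)^{-j-1}$ and $\eta^{-2k-2}$ when it is $(1+\eta)^{-j-2}$), so that no cancellations or index-dependent exponents appear in the final bound. Once this is verified by the inductive step, the proof is essentially automatic.
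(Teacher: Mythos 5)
Your proposal is correct, and its core is the same as the paper's: reduce everything to the estimates \eqref{esti-g0}--\eqref{esti-g00} of Lemma~\ref{l1.1} via the substitution $\xi=\eta^2$, with Lemma~\ref{l44} guaranteeing smoothness at $\xi=0$. The execution is organized differently, though. You split into $|\xi|\le1$ (compactness plus smoothness) and $\xi\ge1$ (an explicit Fa\`a di Bruno expansion $\frac{d^k}{d\xi^k}h(\sqrt\xi)=\sum_j c_{kj}\eta^{-(2k-j)}h^{(j)}(\eta)$, whose exponent bookkeeping you verify correctly), and you handle $f_2$ by a separate Leibniz step. The paper instead runs a single one-step induction: if $f(\xi)=F(\xi^{1/2})$ with $F$ smooth and even, then $f'(\xi)=\Psi(\xi^{1/2})$ with $\Psi(\zeta)=\tfrac12\zeta^{-1}F'(\zeta)$ again smooth and even, and the decay order in $\zeta$ increases by $2$ at each step; the only point needing care, the region $|\zeta|<1$, is settled by the identity $\zeta^{-1}F'(\zeta)=\int_0^1F''(\theta\zeta)\,d\theta$ (valid because $F'(0)=0$ for even $F$). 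That formulation avoids the multi-term sum and the region splitting, keeps the constants as explicit finite linear combinations of the $C_{km}$ from Lemma~\ref{l1.1}, and treats $f_1$, $f_2$, $f_3$ uniformly. Both routes are sound; yours costs a little more bookkeeping (which you correctly identify as the only real issue) but is equally rigorous, provided you note, as you do, that for $f_3$ the bounds on $|\xi|\le1$ must be made uniform in $t$ with the factor $e^{-\nu t}$.
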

\begin{proof}
For $k=0$, the desired estimates \eqref{esti-fj} readily follow
from \eqref{esti-g0} and \eqref{esti-g00}; it suffices to replace
$\xi$ by $\xi^{1/2}$ (and use the fact that the functions $f_j$
given by \eqref{fofxi} are smooth and in particular continuous at
$\xi=0$). Next, note that if $f(\xi)=F(\xi^{1/2})$, where
$F(\zeta)$ is a smooth even function, then
$f'(\xi)=\Psi(\xi^{1/2})$, where
$\Psi(\zeta)=\frac12F'(\zeta)/\zeta$ is again a smooth even
function. Thus, it suffices to prove that if a smooth even function
$F$ satisfies estimates of the form
\begin{equation*}
    \abs{F^{(k)}(\zeta)}\le d_k (1+\abs{\zeta})^{-k-k_0},\qquad k=0,1,2,\dots\,,
\end{equation*}
for some $k_0$, then $\Psi$ satisfies the same estimates but with
$k_0$ increased by $2$ and with new constants $d_k$, each of which
is a finite linear combination of the old ones. This is trivial for
$\abs{\zeta}\ge1$, and in the region $\abs{\zeta}<1$ one can use
the identity
\begin{equation*}
    \zeta^{-1}F'(\zeta)=\zeta^{-1}\bigl(F'(\zeta)-F'(0)\bigr)
    =\int_0^1 F''(\theta\zeta)\,d\theta.\qedhere
\end{equation*}
\end{proof}

\subsection{Computation of the transient part and the equivalent sources}\label{ss21}

Now we will prove Lemmas~\ref{l2.1} and~\ref{l2.2} by proving the
equivalent Theorem~\ref{th-asf}. Let $f(\xi)$ be any of the
functions $f_1(\xi)$, $f_2(\xi)$, and $f_3(\xi,t)$ given
by~\eqref{fofxi} or the function $f_4(xi,t)=\partial f_3(\xi,t)/\partial t$.
We need to compute the difference
\begin{equation}\label{difference}
{\mathcal{R}}\equiv{\mathcal{R}}(x,\lambda,\mu)
=\bigl(f(\lambda^{-2}L)-f(\lambda^{-2}L^{(0)})\bigr) V\left(\frac
x\mu\right)
\end{equation}
and estimate it in an appropriate norm. Let us make the change of
variables $x=\mu y$. In the new variables, \eqref{difference}
becomes
\begin{equation}\label{difference-y}
 {\mathcal{R}}=\bigl(f(L_y)-f(L_y^{(0)})\bigr)V(y),
\end{equation}
where
\begin{equation*}
L_y=-\omega^2\nabla_y\frac{c^2(\mu y)}{c_0^2}\nabla_y, \qquad
L_y^{(0)}=-\omega^2 \nabla_y^2, \qquad
\nabla_y=\left(\frac\partial{\partial y_1},\frac\partial{\partial
y_2}\right),
\end{equation*}
and $\omega=c_0/(\lambda\mu)$ is bounded by condition~\eqref{eq1.1e}.

To compute this difference, we use the machinery of noncommutative
analysis. We refer the reader to \cite{14,15} for details
concerning the definition and properties of functions of
noncommuting operators and only recall that a function
$F(\A1_1,\dots,\A n_n)$ of (possibly, noncommuting) operators
$A_1,\dots A_n$ can be defined as follows in the particular case
where the $A_j$ are the generators of uniformly bounded strongly
continuous one-parameter operator groups $e^{iA_j t}$, $t\in\mathbf{R}$,
on a Hilbert space $H$:
\begin{equation*}
    F(\A1_1,\dots,\A n_n)u=\frac{1}{(2\pi)^{n/2}}
    \int_{\mathbf{R}^n}\widetilde F(t_1,t_2\dots,t_n)e^{iA_n t_n}\dotsm  e^{iA_1
    t_1}u\,dt_1\dotsm dt_n,
\end{equation*}
$u\in H$, where $\widetilde F$ is the Fourier transform of the symbol $F$,
which is assumed to satisfy certain conditions (e.g., see
\cite{15}) guaranteeing that the integral on the right-hand side is
well defined. The numbers (Feynman indices) over operators indicate
the order of their action: of any two operators, the operator with
the smaller Feynman index stands to the right of the operators with
the larger Feynman index in products.

It follows by the zero-order Newton formula of noncommutative
analysis (see \cite{14} and \cite[Theorem I.8]{15}) that
\begin{equation}
f(L_y)-f(L_y^{(0)})=\frac{\delta
f}{\delta\xi}(\overset3{L_y},\overset1{L_y^{(0)}})\overset2{\overline{(L_y-L_y^{(0)})}}
=\frac{\delta f}{\delta\xi}(\overset3{L_y},\overset1{L_y^{(0)}})
\overset2{T}, \label{eq3.2}
\end{equation}
where
\begin{equation*}
\frac{\delta
f}{\delta\xi}(\xi_1,\xi_2)=\frac{f(\xi_1)-f(\xi_2)}{\xi_1-\xi_2}
\end{equation*}
is the first difference quotient of $f$ and
\begin{equation*}
    T=L_y-L_y^{(0)}=\omega^2\biggl\langle\nabla_y,
    \biggl(1-\frac{c^2(\mu y)}{c_0^2}\biggr)\nabla_y\biggr\rangle
    \equiv \omega^2\sum_{j=1}^2\pd{}{y_j}\phi(\mu y)\pd{}{y_j}.
\end{equation*}
Here we have denoted
\begin{equation*}
    \phi(z)=1-\frac{c^2(z)}{c_0^2};
\end{equation*}
this function is uniformly bounded together with all of its
derivatives for $z\in\mathbf{R}^2$, and $\phi(0)=0$.

We further transform the right-hand side of \eqref{eq3.2} as
follows.
\begin{proposition}\label{p33}
\begin{equation}\label{eq3.2a}
\frac{\delta f}{\delta\xi}(\overset3{L_y},\overset1{L_y^{(0)}})
 \overset2{T}
 =\frac{\delta f}{\delta\xi}(\overset3{L_y},\overset2{L_y^{(0)}})
        \overset1{T}
 + \frac{\delta^2 f}{\delta\xi^2}
   (\overset4{L_y},\overset3{L_y^{(0)}},\overset1{L_y^{(0)}})
       \overset2{\overline{[T,L_y^{(0)}]}},
\end{equation}
where
\begin{equation*}
    \frac{\delta^2 f}{\delta\xi^2}(\xi_1,\xi_2,\xi_3)
    =\frac{\frac{\delta f}{\delta\xi}(\xi_1,\xi_2)
      -\frac{\delta f}{\delta\xi}(\xi_1,\xi_3)}
    {\xi_2-\xi_3}
\end{equation*}
is the second difference quotient of $f$ and
\begin{equation*}
    [T,L_y^{(0)}]=TL_y^{(0)}-L_y^{(0)}T
\end{equation*}
is the commutator of $T$ and $L_y^{(0)}$.
\end{proposition}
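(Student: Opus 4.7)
The plan is to treat \eqref{eq3.2a} as a standard Feynman-index rearrangement formula of noncommutative analysis (cf.~\cite{14,15}), quantifying the cost of moving $T$ from Feynman index~$2$ (between $L_y$ and $L_y^{(0)}$) to Feynman index~$1$ (to the right of $L_y^{(0)}$). Setting $A:=L_y$, $B:=L_y^{(0)}$, and $\Phi(\xi_1,\xi_2):=\delta f/\delta\xi(\xi_1,\xi_2)$, I would first verify the identity for a product symbol $\Phi(\xi_1,\xi_2)=\Phi_1(\xi_1)\Phi_2(\xi_2)$, then specialize to exponentials, and finally extend to the actual $\Phi$ by Fourier inversion.

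For a product symbol the Feynman indices merely encode left-to-right operator order, so
\begin{equation*}
    \Phi(\A3,\B1)\overset2{T} - \Phi(\A3,\B2)\overset1{T} = \Phi_1(A)\bigl(T\Phi_2(B)-\Phi_2(B)T\bigr) = \Phi_1(A)\,[T,\Phi_2(B)].
\end{equation*}
Choosing $\Phi_2(\xi)=e^{it\xi}$, Duhamel's identity gives
\begin{equation*}
    [T,e^{itB}] = i\int_0^t e^{i(t-s)B}[T,B]\,e^{isB}\,ds,
\end{equation*}
which, on assigning Feynman index~$3$ to the left copy of $B$, index~$2$ to $[T,B]$, and index~$1$ to the right copy of $B$, is recognized as the scalar first difference quotient $(e^{it\xi_2}-e^{it\xi_3})/(\xi_2-\xi_3)$ evaluated at $(\B3,\B1)$ with $\overset2{\overline{[T,B]}}$ sandwiched in. Multiplying on the left by $\Phi_1(A)=e^{i\tau A}$, which is forced into index~$4$ since slots~$1$--$3$ are now occupied, the composite symbol equals $[\Phi(\xi_1,\xi_2)-\Phi(\xi_1,\xi_3)]/(\xi_2-\xi_3)$ for $\Phi(\xi_1,\xi_2)=e^{i\tau\xi_1}e^{it\xi_2}$. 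For the true $\Phi=\delta f/\delta\xi$ this quantity is, by the very definition in the statement, $\delta^2 f/\delta\xi^2(\xi_1,\xi_2,\xi_3)$, so \eqref{eq3.2a} is established for exponential symbols.

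To pass to the general $\Phi=\delta f/\delta\xi$, both sides are linear in $\Phi$, so I would insert the Fourier representation $\Phi(\xi_1,\xi_2)=\frac{1}{2\pi}\iint\widetilde\Phi(\tau,t)e^{i\tau\xi_1}e^{it\xi_2}\,d\tau\,dt$ and integrate the exponential identity against $\widetilde\Phi$. The self-adjointness of $A$ and $B$ makes $e^{i\tau A}$ and $e^{itB}$ unitary on $L^2$, and the decay estimates in Lemma~\ref{l1.3} (together with parallel estimates for $\delta^2 f/\delta\xi^2$, obtainable by the same integration-by-parts argument) show that $\widetilde\Phi$ is regular enough for Fubini's theorem to apply on Schwartz test functions; the identity then extends to $L^2$ by density.

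The step I expect to be most delicate is twofold: first, the Feynman-index bookkeeping across the Duhamel computation, in particular the forced bump of $A$ from index~$3$ to index~$4$ to accommodate the two copies of $B$ together with $[T,B]$; second, justifying the commutation of the unbounded operator $T=\omega^2\langle\nabla_y,\phi(\mu y)\nabla_y\rangle$ with the operator-valued integrands. The latter is routine once one observes that $[T,B]$ is again a second-order differential operator with bounded smooth coefficients (because $\phi\in C_b^\infty$), so applying $e^{isB}$ to Schwartz data keeps all intermediate expressions in fixed Sobolev spaces.
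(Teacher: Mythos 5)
Your proof is correct, but it takes a genuinely different route from the paper's. The paper proves \eqref{eq3.2a} purely symbolically: it repeats, inside the formal calculus of Feynman-ordered expressions, the standard derivation of the commutation formula $\overset2Af(\overset1B)-\overset1Af(\overset2B)=\frac{\delta f}{\delta\xi}(\overset1B,\overset3B)\overset2{\overline{[A,B]}}$ (extracting the linear factor $\B1-\B3$ via $f(x)-f(\xi)=(x-\xi)\frac{\delta f}{\delta\xi}(x,\xi)$ and then shifting indices), and simply observes that the spectator argument $\overset3{L_y}$ does not interfere because its Feynman index does not lie between those of $T$ and $L_y^{(0)}$. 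You instead descend to the analytic definition of functions of noncommuting operators: you verify the identity on product symbols, reduce to exponentials, identify $[T,e^{itB}]$ with the first difference quotient of $e^{it\xi}$ via Duhamel's formula $[T,e^{itB}]=i\int_0^t e^{i(t-s)B}[T,B]e^{isB}\,ds$, and extend by Fourier superposition. Your bookkeeping is right (the difference quotient is symmetric, so $(\B3,\B1)$ versus $(\B1,\B3)$ is immaterial, and the bump of $L_y$ to index $4$ matches the statement), and the Duhamel computation does check out. What the paper's argument buys is brevity and the fact that it needs no new analysis: it leans entirely on the already-established axioms of noncommutative analysis from \cite{15}. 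What your argument buys is self-containedness and an explicit handle on the convergence and domain questions, which the symbolic proof sweeps into the general theory; the price is that you must represent $\delta f/\delta\xi$ and $\delta^2f/\delta\xi^2$ by absolutely convergent superpositions of exponentials (the simplex representation used later in the proof of Proposition~\ref{p36} is the cleanest way to do this, and Lemma~\ref{l1.3} supplies the required $L^1$ bounds on $\widetilde{f'}$ and $\widetilde{f''}$) and justify Fubini on Schwartz data, as you indicate.
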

\begin{proof}
The proof of \eqref{eq3.2a} mimics the derivation of the general
commutation formula \cite[Proposition I.3]{15}
\begin{equation}\label{gcf}
    \overset2Af(\overset1B)-\overset1Af(\overset2B)
    =\frac{\delta f}{\delta\xi}(\overset1B,\overset3B)
    \overset2{\overline{[A,B]}}
\end{equation}
of noncommutative analysis, with $T$ playing the role of $A$ and
$L_y^{(0)}$ playing the role of $B$. Recall this derivation (e.g.,
see~\cite[pp.~52--53]{15}). We need to compute
\begin{equation*}
    [A,f(B)]\equiv Af(B)-f(B)A=\A2f(\B1)-\A1f(\B2).
\end{equation*}
The Feynman indices can be chosen independently for either summand
on the right, and we can write
\begin{equation*}
    [A,f(B)]=\A2f(\B1)-\A2f(\B3)=\A2(f(\B1)-f(\B3))
    =\A2(\B1-\B3)\df(\B1,\B3).
\end{equation*}
(Here we have used the identity $f(x)-f(\xi)=(x-\xi)\df(x,\xi)$,
which is in fact the definition of $\delta f/\delta\xi$.) Next, we
move apart the Feynman indices over the $B$'s, thus obtaining
\begin{equation*}
    \A2(\B1-\B3)\df(\B1,\B3)\!=\!\A2(\B1-\B3)\df(\B0,\B4)\!=\!
    \AB2\df(\B0,\B4)\!=\!\AB2\df(\B1,\B3).
\end{equation*}
(In the middle, we have written $\A2(\B1-\B3)=\AB2$ using the fact
that no other operators in the expression have Feynman indices in
the interval $[1,3]$.) Thus, we arrive at the desired commutation
formula \eqref{gcf}.

The derivation of \eqref{eq3.2a} differs from this only in that
now, instead of $f(\B1)$, we have $\frac{\delta f}{\delta\xi}
(\overset3{L_y}, \overset1{L_y^{(0)}})$; i.e., there is an
additional operator argument, $\overset3{L_y}$, but this argument
does not invalidate the computation, because its Feynman number
does not lie between those of $A=T$ and $B=L_y^{(0)}$.
\end{proof}

Let us evaluate the commutator $[T,L_y^{(0)}]$.
\begin{proposition}\label{p34}
One has
\begin{equation*}
    [T,L_y^{(0)}]=\mu T_1, \quad\text{where}\quad
    \norm{T_1\colon H^s(\mathbf{R}^2_y)\longrightarrow H^{s-3}(\mathbf{R}^2_y)}\le C_s
\end{equation*}
for all $s$ with some constants $C_s$ independent of $\mu$ as
$\mu\to0$.
\end{proposition}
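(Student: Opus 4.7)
The plan is to compute $[T,L_y^{(0)}]$ explicitly and show that every surviving term carries at least one factor of $\mu$, extracted via the chain rule from a derivative of the coefficient $\phi(\mu y)$. Since $L_y^{(0)}=-\omega^2\sum_k\partial_{y_k}^2$ has constant coefficients and $\partial_{y_j}$ commutes with $\partial_{y_k}^2$, one has
\begin{equation*}
[T,L_y^{(0)}]=-\omega^4\sum_{j,k}\bigl[\partial_{y_j}\phi(\mu y)\partial_{y_j},\,\partial_{y_k}^2\bigr]
=\omega^4\sum_{j,k}\partial_{y_j}\bigl[\partial_{y_k}^2,\phi(\mu y)\bigr]\partial_{y_j}.
\end{equation*}

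Next, I would apply the Leibniz rule together with the chain rule,
\begin{equation*}
\bigl[\partial_{y_k}^2,\phi(\mu y)\bigr]
=2\partial_{y_k}\!\bigl(\phi(\mu y)\bigr)\partial_{y_k}+\partial_{y_k}^2\!\bigl(\phi(\mu y)\bigr)
=2\mu(\partial_k\phi)(\mu y)\partial_{y_k}+\mu^2(\partial_k^2\phi)(\mu y),
\end{equation*}
so that the overall expression acquires an explicit factor of $\mu$. Writing
\begin{equation*}
T_1=\omega^4\sum_{j,k}\partial_{y_j}\!\Bigl[2(\partial_k\phi)(\mu y)\partial_{y_k}+\mu(\partial_k^2\phi)(\mu y)\Bigr]\partial_{y_j},
\end{equation*}
one obtains $[T,L_y^{(0)}]=\mu T_1$. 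This $T_1$ is a differential operator of order at most three. Its coefficients are polynomials (in $\mu$, of degree $\le 1$) in the smooth bounded functions $(\partial_k\phi)(\mu y)$ and $(\partial_k^2\phi)(\mu y)$; since $\phi(z)=1-c^2(z)/c_0^2$ stabilizes at infinity, all derivatives of $\phi$ are uniformly bounded on $\mathbf{R}^2$, and post-composition with $y\mapsto\mu y$ preserves the sup-norm of every derivative. Together with the standing bound $\omega<\omega_0$, this gives uniform $C^k$-bounds on the coefficients of $T_1$, independent of $\mu$.

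The boundedness claim $\|T_1\colon H^s(\mathbf{R}^2_y)\to H^{s-3}(\mathbf{R}^2_y)\|\le C_s$ then follows from the standard mapping property of a differential operator of order $\le 3$ whose coefficients are bounded together with all derivatives. No genuine analytic difficulty is expected; the only step requiring care is bookkeeping with Feynman-style operator orderings so as to confirm that no term of $[T,L_y^{(0)}]$ survives without being differentiated through the variable coefficient $\phi(\mu y)$, which is precisely where the decisive factor of $\mu$ comes from.
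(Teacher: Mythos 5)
Your computation is correct and coincides with the paper's own proof: the paper likewise writes $[T,L_y^{(0)}]=\omega^4\langle\nabla_y,[\nabla_y^2,\phi(\mu y)]\nabla_y\rangle$, extracts the factor $\mu$ from the chain rule applied to $\phi(\mu y)$, and concludes by noting that the resulting third-order operator has coefficients bounded together with all derivatives uniformly in $\mu$ (with $\omega<\omega_0$ controlling the $\omega^4$ prefactor). No further comment is needed.
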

\begin{proof}
We have
\begin{align*}
    [T,L_y^{(0)}]&=\omega^4\langle\nabla_y,[\nabla_y^2,\phi(\mu y)]
    \\
    &=\mu\omega^4\sum_{j=1}^2\biggl\langle\nabla_y,
    \biggl(2\phi'_{z_j}(\mu y)\pd{}{y_j}
    +\mu\phi''_{z_jz_j}(\mu y)\biggr)\nabla_y\biggr\rangle,
\end{align*}
and it remains to recall that $\phi(z)$ is uniformly bounded
together with all derivatives.
\end{proof}

By Propositions \ref{p33} and \ref{p34}, we can write
\begin{equation*}
    f(L_y)-f(L_y^{(0)})=\frac{\delta f}{\delta\xi}(\overset3{L_y},\overset2{L_y^{(0)}})
        \overset1{T}
 + \mu\frac{\delta^2 f}{\delta\xi^2}
   (\overset4{L_y},\overset3{L_y^{(0)}},\overset1{L_y^{(0)}})
       \overset2T_1.
\end{equation*}
Accordingly,
\begin{equation}\label{metka1}
    {\mathcal{R}}=\bigl(f(L_y)-f(L_y^{(0)})\bigr)V=AW+\mu BV,
\end{equation}
where
\begin{equation}\label{metka1a}
    W=TV,\qquad
    A=\frac{\delta f}{\delta\xi}(\overset2{L_y},\overset1{L_y^{(0)}}),
    \qquad
    B=\frac{\delta^2 f}{\delta\xi^2}
   (\overset4{L_y},\overset3{L_y^{(0)}},\overset1{L_y^{(0)}})
       \overset2T_1.
\end{equation}
Let us estimate the expression \eqref{metka1} for $f=f_j$,
$j=1,2,3,4$.
\begin{proposition}\label{p32}
One has $V\in H^s(\mathbf{R}_y^2)$ for every $s$.
\end{proposition}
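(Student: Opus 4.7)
The claim is a routine consequence of the decay estimates \eqref{eq1.1c}, and the plan is to deduce it directly from those estimates. My approach would be to show $V \in H^k(\mathbf{R}_y^2)$ for every nonnegative integer $k$ and then invoke the trivial continuous embedding $H^k \hookrightarrow H^s$ for $k \ge s$ (together with $L^2 \subset H^s$ for $s \le 0$) to conclude $V \in H^s$ for every real $s$.

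The main step is to verify that every derivative $\partial^\alpha V$ lies in $L^2(\mathbf{R}_y^2)$. By \eqref{eq1.1c},
\begin{equation*}
 \int_{\mathbf{R}^2}\abs{\partial^\alpha V(y)}^2\,dy
 \le C_\alpha^2\int_{\mathbf{R}^2}(1+\abs{y})^{-2\abs{\alpha}-2\varkappa}\,dy.
\end{equation*}
Passing to polar coordinates, the right-hand side reduces to a one-dimensional integral of $r(1+r)^{-2\abs{\alpha}-2\varkappa}$, which converges precisely when $2\abs{\alpha}+2\varkappa>2$, i.e., $\abs{\alpha}+\varkappa>1$. Since $\varkappa>1$ by assumption, this holds for every multi-index $\alpha$ (including $\alpha=0$). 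Hence $\partial^\alpha V\in L^2(\mathbf{R}_y^2)$ for all $\alpha$, which by the standard characterization of integer-order Sobolev spaces gives $V\in H^k(\mathbf{R}_y^2)$ for every $k=0,1,2,\dots$.

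For arbitrary real $s$, I would then split into two cases. If $s\le0$, the inclusion $L^2\subset H^s$ yields $V\in H^s$ from $V\in H^0$. If $s>0$, pick any integer $k\ge s$; then the obvious inequality $\norm{u}_s\le\norm{u}_k$ gives $H^k\subset H^s$, and $V\in H^k$ implies $V\in H^s$.

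There is no real obstacle here: the only substantive point is the use of the hypothesis $\varkappa>1$, which is exactly what makes $V$ itself (and a fortiori all its derivatives) square-integrable in two dimensions. Since the estimate \eqref{eq1.1c} is assumed for all orders $\abs{\alpha}$ with the same $\varkappa$, no additional decay beyond what is already guaranteed is required.
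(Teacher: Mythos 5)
Your proof is correct and is essentially the paper's argument spelled out in detail: the paper simply states that the proposition ``follows from the estimates~\eqref{eq1.1c},'' and your computation (square-integrability of every $\partial^\alpha V$ via polar coordinates using $\varkappa>1$, then the embeddings $H^k\hookrightarrow H^s$) is exactly the routine verification being alluded to.
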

\begin{proof}
This follows from the estimates~\eqref{eq1.1c}.
\end{proof}
\begin{proposition}\label{p35}
For every $s$, one has $W\in H^s(\mathbf{R}_y^2)$ and
\begin{equation*}
    \norm{W}_{H^s(\mathbf{R}^2_y)}=O(\mu),\qquad \mu\to0.
\end{equation*}
\end{proposition}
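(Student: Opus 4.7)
The plan is to exploit the fact that $\phi(0)=0$ in order to extract an explicit factor of $\mu$ from the coefficient $\phi(\mu y)$, and then use the decay hypothesis \eqref{eq1.1c} on $V$ (together with $\varkappa>1$) to bound what remains in every $H^s$ uniformly in $\mu$.

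First, I would apply Hadamard's lemma to $\phi$: since $\phi$ is smooth and $\phi(0)=0$, one can write
\begin{equation*}
 \phi(\mu y)=\mu\sum_{k=1}^2 y_k\,h_k(\mu y),\qquad
 h_k(z)=\int_0^1\pd{\phi}{z_k}(\theta z)\,d\theta,
\end{equation*}
where, because $\phi$ is uniformly bounded together with all its derivatives on $\mathbf{R}^2$, so is every $h_k$. Substituting into the formula for $T$ gives
\begin{equation*}
 W=TV=\mu\omega^2\sum_{j,k=1}^2\pd{}{y_j}\bigl(y_k\,h_k(\mu y)\,\pd{V}{y_j}\bigr),
\end{equation*}
so a factor $\mu$ is now explicit, and it remains to show that each summand $u_{jk}(y):=\partial_{y_j}(y_k h_k(\mu y)\partial_{y_j}V)$ is bounded in $H^s(\mathbf{R}^2_y)$ uniformly in $\mu$ for every $s$.

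Next, I would estimate $\|u_{jk}\|_{H^s}$ by estimating $\partial^\alpha u_{jk}$ in $L^2$ for every multi-index $\alpha$ (recalling that on $\mathbf{R}^2$ an equivalent norm on $H^s$ for nonnegative integer $s$ is $\sum_{|\alpha|\le s}\|\partial^\alpha\cdot\|$, and for noninteger or negative $s$ one uses duality/interpolation after first treating the integer case). Applying Leibniz's rule, each term of $\partial^\alpha u_{jk}$ is a product of (i)~either $y_k$ or $1$ (further derivatives of $y_k$ vanish), (ii)~a derivative of $h_k(\mu y)$, which equals $\mu^{|\beta|}(\partial^\beta h_k)(\mu y)$ and is therefore uniformly bounded for $\mu$ in any bounded interval, and (iii)~a derivative $\partial^\gamma\partial_{y_j}V$, which by \eqref{eq1.1c} is $O((1+|y|)^{-1-\varkappa-|\gamma|})$. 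The worst case for decay is when factor (i) is $y_k$ and factor (iii) is $\partial_{y_j}V$ itself, giving a pointwise bound $O((1+|y|)^{-\varkappa})$. Since $\varkappa>1$, the function $(1+|y|)^{-\varkappa}$ lies in $L^2(\mathbf{R}^2)$, so $\|\partial^\alpha u_{jk}\|_{L^2}\le C_{s,\alpha}$ with $C_{s,\alpha}$ independent of $\mu$.

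Summing over $j,k$ and $|\alpha|\le s$ yields $\|W\|_{H^s}\le C_s\mu$, which is the claim. The only delicate point is the bookkeeping in step~(iii): one must verify that the slowest-decaying term is still in $L^2(\mathbf{R}^2)$, which is precisely where the assumption $\varkappa>1$ (rather than just $\varkappa>0$) is needed; everything else is routine because $h_k$ and all its derivatives are uniformly bounded and the prefactor $\mu^{|\beta|}$ only helps as $\mu\to0$.
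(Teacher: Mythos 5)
Your proof is correct and follows essentially the same route as the paper: both extract the explicit factor $\mu$ from $\phi(\mu y)$ via Hadamard's lemma (the paper writes $\phi(\mu y)=\mu\langle F(\mu y),y\rangle$ with the same integral formula for $F$ that you use for $h_k$) and then conclude from the decay estimates \eqref{eq1.1c} that the remaining factor is bounded in every $H^s$. You merely spell out the Leibniz-rule bookkeeping and the role of $\varkappa>1$ that the paper leaves implicit in the assertion that $y_j\,\partial V/\partial y_k\in H^s(\mathbf{R}^2_y)$.
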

\begin{proof}
We have
\begin{equation*}
    \phi(\mu y)=\mu\langle F(\mu y),y\rangle,
\end{equation*}
where the vector function
\begin{equation*}
    F(z)=\int_0^1\pd{\phi}{z}(\theta z)\,d\theta
\end{equation*}
is bounded together with all derivatives, and hence for the
function $W=TV$ we obtain
\begin{equation*}
    W(y)=\mu\omega^2\biggl(\pd{}{y_1}\langle F(\mu y),y\rangle\pd{V(y)}{y_1}
    +\pd{}{y_2}\langle F(\mu y),y\rangle\pd{V(y)}{y_2}\biggr).
\end{equation*}
Since, by virtue of the estimates \eqref{eq1.1c}, the function
$y_j\partial V(y)/\partial y_k$ lies in $H^s(\mathbf{R}^2_y)$ for every $s$, we
arrive at the desired assertion.
\end{proof}

\begin{proposition}\label{p36}
Let $f=f_j$, $j=1,2,3,4$. Then for each $s\in\mathbf{R}$ there exists a
constant $C_s$ independent of $\mu\to0$ such that
\begin{align*}
    \norm{A \colon H^s(\mathbf{R}^2_y)\to H^s(\mathbf{R}^2_y)}&\le
    C_s,\qquad \norm{B\colon H^s(\mathbf{R}^2_y)\to H^{s-3}(\mathbf{R}^2_y)}\le
    C_s\\ \intertext{for $j=1,2$,}
\norm{A \colon H^s(\mathbf{R}^2_y)\to H^s(\mathbf{R}^2_y)}&\le
    C_se^{-\nu t},\; \norm{B\colon H^s(\mathbf{R}^2_y)\to H^{s-3}(\mathbf{R}^2_y)}\le
    C_se^{-\nu t}
\end{align*}
for $j=3,4$.
\end{proposition}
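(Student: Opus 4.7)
My plan is to establish the norm bounds for $A$ and $B$ by extracting uniform symbol estimates from Lemma~\ref{l1.3}, feeding them into the ordered functional calculus of \cite{14,15}, and combining with Proposition~\ref{p34}. The first step is to control the divided differences: writing
\[
\frac{\delta f}{\delta\xi}(\xi_1,\xi_2) = \int_0^1 f'(\xi_2 + \theta(\xi_1 - \xi_2))\,d\theta
\]
and representing the second divided difference as an analogous integral over the 2-simplex of $f''$, I differentiate under the integral sign and invoke Lemma~\ref{l1.3} to conclude that $\delta f_j/\delta\xi$ is $C^\infty$ on $[0,\infty)^2$ and $\delta^2 f_j/\delta\xi^2$ is $C^\infty$ on $[0,\infty)^3$, with all partial derivatives bounded uniformly by a constant $C_{km}$ for $j=1,2$ and by $C_{km}e^{-\nu t}$ for $j=3,4$ (recall that $f_4=\partial f_3/\partial t$ inherits the same exponential factor as $f_3$).

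The second step is to convert these symbol bounds into operator bounds. Since $L_y$ and $L_y^{(0)}$ are nonnegative self-adjoint operators on $L^2(\mathbf{R}^2_y)$, they generate strongly continuous unitary groups $e^{isL_y}$ and $e^{isL_y^{(0)}}$, so the ordered calculus of \cite{14,15} applies: a symbol $F(\xi_1,\xi_2)$ (resp.\ $F(\xi_1,\xi_2,\xi_3)$) that is smooth with uniformly bounded partial derivatives up to a finite order defines a bounded operator $F(\overset2{L_y},\overset1{L_y^{(0)}})$ (resp.\ the three-operator analog) on $L^2$, with norm controlled by finitely many $C^k$-seminorms of $F$. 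This yields the desired $L^2$-boundedness for $A$ and for the prefactor of $T_1$ in $B$, with the correct $O(1)$ or $O(e^{-\nu t})$ behavior. To upgrade to $H^s$, I conjugate by $(1-\Delta_y)^{s/2}$ and commute it past each operator slot; since $L_y$ and $L_y^{(0)}$ are uniformly elliptic second-order operators with smooth uniformly bounded coefficients, the resulting commutators fit back into the same calculus at one order lower, and a finite induction on $|s|$ closes the estimate. Finally, for $B$ one multiplies the resulting $H^s\!\to\!H^s$ bound of the divided-difference factor by the $H^s\!\to\!H^{s-3}$ bound on $T_1$ supplied by Proposition~\ref{p34}.

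The principal obstacle is uniformity as $\mu\to 0$. Everything hinges on the coefficient $c^2(\mu y)/c_0^2$ and all of its $y$-derivatives being uniformly bounded in $\mu$, which holds because $c$ is smooth and stabilizes at infinity so that each $y$-derivative brings down a factor of $\mu\le\mu_0$. Together with the uniform upper bound $\omega\le\omega_0$, this keeps the symbol of $L_y$ in a fixed pseudodifferential class independent of $\mu$, so the constants produced by the noncommutative calculus and by the commutator steps in the $H^s$-upgrade are all $\mu$-independent. The factor $e^{-\nu t}$ present in $f_3,f_4$ is a scalar that passes cleanly through the calculus, so the time-dependent bounds for $j=3,4$ follow automatically from the $j=1,2$ argument applied to the renormalized symbols $e^{\nu t}f_3$ and $e^{\nu t}f_4$.
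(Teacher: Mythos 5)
Your first step reproduces the paper's simplex representation of the difference quotients, but your second step rests on a principle that the cited calculus does not supply. You claim that a symbol $F(\xi_1,\xi_2)$ that is smooth with uniformly bounded partial derivatives up to a finite order automatically defines a bounded ordered operator $F(\overset2{L_y},\overset1{L_y^{(0)}})$ with norm controlled by finitely many $C^k$-seminorms. That is not how the calculus of \cite{14,15} works: as the paper itself recalls, $F(\overset1A_1,\dots,\overset nA_n)u$ is \emph{defined} as $\int\widetilde F(t)\,e^{iA_nt_n}\dotsm e^{iA_1t_1}u\,dt$, so one must verify that $\widetilde F$ is integrable (or a finite measure); boundedness of derivatives of $F$ without decay of $F$ itself gives neither well-definedness nor a norm bound. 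As a general statement your principle is essentially the operator Lipschitz problem for the first difference quotient (where a bounded derivative is known to be insufficient and the usable criteria are of Fourier-integrability or Besov type), and for the second difference quotient, which sandwiches $T_1$ between spectral arguments, it is a triple-operator-integral question where no such ``finitely many bounded derivatives'' criterion is available. The paper closes exactly this gap: it pushes the one-dimensional Fourier transform of $f^{(k)}$ through the simplex representation, obtaining the operator as $\int_{\Delta_k}\int\widetilde{f^{(k)}}(p)\,e^{ip\theta_1L_y}e^{ip\theta_2L_y^{(0)}}\dotsm\,dp\,d\theta$, and then uses the \emph{decay} estimates of Lemma~\ref{l1.3} to conclude that $\widetilde{f_j'},\widetilde{f_j''}\in L^1(\mathbf{R})$, with $L^1$-norm $O(e^{-\nu t})$ for $j=3,4$; the operator norm is then bounded by $\Vert\widetilde{f^{(k)}}\Vert_{L^1}$ times the uniform group bounds and, for $B$, the $T_1$ bound of Proposition~\ref{p34}. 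Your argument retains only the boundedness of the derivatives of the difference quotients and discards the decay of $f_j$, which is the essential analytic input.

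A secondary divergence: your $H^s$ upgrade by conjugating with $(1-\Delta_y)^{s/2}$ and inducting on commutators is both heavier than necessary and incomplete on the point you yourself identify as critical, namely uniformity in $\mu$ of the commutator remainders (commuting $(1-\Delta_y)^{s/2}$ past the group factors $e^{ip\theta L_y}$ produces terms that must be controlled uniformly in $p$ as well). The paper avoids this entirely by equipping $H^s(\mathbf{R}^2_y)$ with the equivalent norm $\norm{(1+L_y)^{s/2}u}$ (with equivalence constants bounded as $\mu\to0$), so that $L_y$ is self-adjoint on each $H^s$ and the groups $e^{itL_y}$, $e^{itL_y^{(0)}}$ are uniformly bounded there (Lemma~\ref{l6}); the $L^2$ argument then transfers verbatim. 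Your observations that $e^{-\nu t}$ passes through as a scalar and that the $H^s\to H^{s-3}$ loss comes from $T_1$ are correct and agree with the paper.
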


\begin{proof}
We make use of the following representation of the $k$th difference
quotient:
\begin{multline*}
    \frac{\delta^k f}{\delta\xi^k}(\xi_1,\dots,\xi_{k+1})
    =\int_{\Delta_k}f^{(k)}(\theta_1\xi_1+\dotsm+\theta_{k+1}\xi_{k+1})\,d\theta_1\dotsm
    d\theta_k\\
    =\frac1{\sqrt{2\pi}}\int_{\Delta_k}\biggl(\int_{-\infty}^\infty
    \widetilde{f^{(k)}}(p)e^{ip(\theta_1\xi_1+\dotsm+\theta_{k+1}\xi_{k+1})}dp\biggr)\,d\theta_1\dotsm
    d\theta_k,
\end{multline*}
where $\widetilde{f^{(k)}}(p)$ is the Fourier transform of the $k$th
derivative $f^{(k)}(\xi)$ and
\begin{equation*}
  \Delta_k=\{(\theta_1,\dotsc,\theta_{k+1}\in\mathbf{R}^{k+1}
  \colon \theta_1+\dotsm+\theta_{k+1}=1,\;\theta_j\ge0,\;
   j=1,\dots,k+1\}
\end{equation*}
is the standard $k$-simplex. Hence
\begin{equation}\label{expr}
\begin{aligned}
  \frac{\delta f}{\delta\xi}(\overset2{L_y},\overset1{L_y^{(0)}})
    &=\frac1{\sqrt{2\pi}}\int_{\Delta_1}\biggl(\int_{-\infty}^\infty
    \widetilde{f'}(p)e^{ip\theta_1L_y}e^{ip\theta_2L_y^{(0)}}dp\biggr)\,d\theta_1
\\
  \frac{\delta^2 f}{\delta\xi^2}(\overset4{L_y},
    \overset3{L_y^{(0)}}\!\!,&\overset1{L_y^{(0)}})\overset2T_1
    \\&=
    \frac1{\sqrt{2\pi}}\int_{\Delta_2}\biggl(\int_{-\infty}^\infty
    \widetilde{f''}(p)e^{ip\theta_1L_y}e^{ip\theta_2L_y^{(0)}}
    T_1e^{ip\theta_3L_y^{(0)}}dp\biggr)\,d\theta_1
    d\theta_2.
\end{aligned}
\end{equation}
Let us estimate the operators~\eqref{expr}. To this end, we use the
following lemma.
\begin{lemma}\label{l6}
For each $s$, there exists a constant $\widetilde C_s$ independent of
$\mu\to0$ such that
\begin{equation*}
    \norm{e^{itL_y}\colon H^s(\mathbf{R}^2_y)\to H^s(\mathbf{R}^2_y)}\le
    \widetilde C_s,\qquad \norm{e^{itL_y^{(0)}}\colon H^s(\mathbf{R}^2_y)\to H^s(\mathbf{R}^2_y)}\le
    \widetilde C_s
\end{equation*}
for all $t\in\mathbf{R}$.
\end{lemma}
\begin{proof}
For $s=0$, the claim is obvious, because the operators $L_y^{(0)}$
and $L_y$ are self-adjoint in $L^2(\mathbf{R}_y)$. For other values of
$s$, one equips $H^s(\mathbf{R}^2_y)$ with the equivalent norm
$\norm{(1+L_y)^{s/2}u}$, so that the operator $L_y$ becomes
self-adjoint. This norm depends on the parameter $\mu$, but it is
not hard to prove (for positive integer $s$ by a straightforward
computation, and for other $s$ by duality and interpolation) that
the constants in the inequalities specifying the equivalence of
norms remain bounded as $\mu\to0$. The argument for $L_y^{(0)}$ is
simpler, because the parameter $\mu$ is not involved. The proof of
Lemma~\ref{l6} is complete.
\end{proof}

Now we can finish the proof of Proposition~\ref{p36}. If
$f=f_1,f_2,f_3,$ or $f_4$, then it follows from Lemma~\ref{l1.3}
that the Fourier transforms of $f'$ and $f''$ belong to $L^1(\mathbf{R})$,
and in the case of $f_3$ and  $f_4$ the $L^1$-norm decays as
$e^{-\nu t}$. By combining this with Lemma~\ref{l6} and with the
estimate for $T_1$ in Proposition~\ref{p34}, we arrive at the
assertion of Proposition~\ref{p36}.
\end{proof}

By applying Propositions \ref{p32}, \ref{p35}, and \ref{p36} to
formulas~\eqref{metka1} and~\eqref{metka1a}, we find that
${\mathcal{R}}=O(\mu)$ in all $H^s(\mathbf{R}^2_y)$ for $f=f_1$ and $f=f_2$ and
${\mathcal{R}}=O(\mu e^{-\nu t})$ in all $H^s(\mathbf{R}^2_y)$ for $f=f_3$ and
$f=f_4$. Let us finally estimate the remainders $R_j$
in~\eqref{eqsotraf-a}. We should take into account the additional
factor $\lambda^{-1}$ for $j=2$ and pass from the variables $y$ to the
original variables $x=\mu y$. Since
\begin{equation}\label{ququ}
    \norm{u}_s\equiv\norm{u}_{H^s(\mathbf{R}^2_x)}\le \mu^{1-s}\norm{u}_{H^s(\mathbf{R}^2_y)}
    \qquad\text{for $\mu\le1$ and $s>0$,}
\end{equation}
we arrive at the desired estimates \eqref{esti-a}. For example, for
$R_2$ we obtain
\begin{equation*}
    \norm{R_2}_2 \le C\mu \lambda^{-1}\mu^{1-2}=C\lambda^{-1}\le
    \frac{C\omega}{c_0}\mu
\end{equation*}
(where the factor $\lambda^{-1}$ comes from \eqref{eqsotraf} and the
factor $\mu^{1-2}=\mu^{-1}$ from \eqref{ququ} for $s=2$). The
estimates for $R_1$ and $R_3$ are similar. The proof of
Theorem~\ref{th-asf} and hence of Lemmas~\ref{l2.1} and~\ref{l2.2}
is complete. \qed

\section{Examples}\label{s5}

In conclusion, let us present two simple examples in which the
asymptotics of the solution of the Cauchy problem \eqref{eq1.1},
\eqref{eq1.2} with a special right-hand side will be demonstrated.
Namely, we use the right-hand side \eqref{eq1.1b},
$Q(x,t)=\lambda^2g_0'(\lambda t)V(x/\mu)$, where
$V(y)=A(1+(y_1/b_1)^2+(y_2/b_2)^2)^{-3/2}$ is the simplest spatial
shape factor~\eqref{eq1.1f} and the function $g_0(\tau)$ is given
by one of formulas (a) (a \textit{sine source}) and (b) (a
\textit{polynomial source}) in Eq.~\eqref{source}.

Recall that the asymptotics of the solution is given by
Theorem~\ref{thm1}, Eqs.~\eqref{sol_tran}
and~\eqref{sol_tran_polar} (the transient solution component) and
by Theorem~\ref{p4.1}, Eq.~\eqref{4.2} (the propagating solution
component away from the focal points). The transient component
$\eta_{trans}(x,t)$ and the wave profile $F(z,\psi)$
(see~\eqref{4.3}) of the propagating component depend only on the
right-hand side and on the parameters $\lambda,\mu$, and $\omega$; they
are represented by integrals which, for our choice of the
right-hand side, can be evaluated (or, in the case of the transient
component, considerably simplified) analytically. The other
ingredients of the asymptotic formula~\eqref{4.2} for the
propagating component (the phase functions $S_j(x,t)$, the
Lagrangian coordinates $\psi_j(x,t)$, the Morse index
$m(\psi_j^0,t)$, and the factors responsible for the Green law and
for the trajectory divergence) depend on the solution of the Cauchy
problem \eqref{4.1} for the Hamiltonian system \eqref{eq3.5},
which, except for the simplest cases, should be solved numerically.

Accordingly, our exposition in both examples is as follows. First,
we find the function $G_0(\xi,t)$ \eqref{G0}, which plays a crucial
role in all subsequent calculations. Then we write out the wave
profile $F(z,\psi)$ and finally present the expression for the
transient component $\eta_{trans}(x,t)$ of the solution. In the
second example, we also numerically compute the trajectories and
display snapshots of the solution obtained with the use of
\textit{Wolfram Mathematica}.

The calculations are mostly carried out in polar coordinates, so
let us rewrite formula~\eqref{FtV} for the Fourier transform of $V$
in the polar coordinates $(\rho,\psi)$, where $p=\rho \mathbf
n(\psi)$ with $\mathbf n(\psi) = (\cos\psi, \sin\psi)$:
\begin{equation}\label{F_V}
\widetilde V\big(\rho\mathbf n(\psi)\big) = Ab_1b_2e^{-\rho
\beta(\psi)}, \quad\text{where } \beta(\psi) \equiv
\sqrt{b_1^2\cos^2\psi+b_2^2\sin^2\psi}.
\end{equation}

\subsection{The case of a sine source}

Let
\begin{equation*}
  g_0(\tau) = ae^{-\tau}(\sin(\alpha\tau+\phi_0)-\sin\phi_0)
\end{equation*}
where $a = (\alpha^2+1) / (\alpha\cos\phi_0-\alpha^2\sin\phi_0)$ is
a normalizing factor. By evaluating the integral in~\eqref{G0}, we
obtain
\begin{equation}\label{2.7}
G_0(\xi,t) = ae^{-t} \Bigl( \frac{ i  e^{-i(\alpha t + \phi_0) }/2}
{1 + i \alpha + i\xi}-  \frac{ i e^{i(\alpha t + \phi_0)}/2} {1 -
i\alpha + i\xi} - \frac{\sin \phi_0} {1 + i {\xi}} \Bigr).
\end{equation}

We see that $G_0(\xi,t)$ is a rational function of $\xi$. Moreover,
a routine computation (which we omit) shows that it can be
represented in the form
\begin{equation}\label{2.7a}
    G_0(\xi,t)=\sum_m q_m(t)\bigl(R_m(\xi^2)+i\xi Q_m(\xi^2)\bigr),
\end{equation}
where $R_m(\zeta)$ and $Q_m(\zeta)$ are rational functions with
real coefficients and with denominators nonvanishing for
$\zeta\ge0$. This is, of course, consistent with the assertion in
Lemma~\ref{l44} concerning the parity of the real and imaginary
parts of $G_0$. As to $\widetilde g_0(\xi)$, we have
\begin{equation}\label{2.7b}
\widetilde g_0(\xi)=\frac1{\sqrt{2\pi}}G_0(\xi,0) =
\frac{a}{\sqrt{2\pi}}\Big( \frac{ i  e^{-i\phi_0}/2} {1 + i \alpha
+ i\xi}-  \frac{ i e^{i\phi_0}/2} {1 - i\alpha + i\xi} - \frac{\sin
\phi_0} {1 + i {\xi}} \Big).
\end{equation}

To evaluate the wave profile $F(z,\psi)$ of the propagating
solution component, we substitute the functions \eqref{F_V} and
\eqref{2.7b} into formula~\eqref{4.3} and obtain
\begin{multline*}
F(z,\psi)= \frac{aAb_1b_2
e^{-i\pi/4}}{\sqrt{2\pi}\omega^{3/2}}\\\shoveright{\times
\int_0^\infty \sqrt{\rho} \Big( \frac{i
 e^{-i\phi_0}/2} {1 + i\alpha - i\rho} - \frac{i
e^{i\phi_0}/2} {1 - i \alpha - i\rho} - \frac{\sin \phi_0} {1 - i
{\rho}} \Big) e^{-\rho \omega^{-1}(\beta(\psi)-iz)} d\rho}\\
\shoveleft{\phantom{F(z,\psi)}= \frac{aAb_1b_2 e^{-i
\pi/4}}{\sqrt{2\pi}\omega^{3/2}} \Big[\frac i 2 e^{-i\phi_0}
\mathrm{I}_0\big(\omega^{-1}(\beta(\psi)-iz),1+i\alpha\big)}
\\ {}-
\frac i 2  e^{i\phi_0} \mathrm{I}_0\big(\omega^{-1}(\beta(\psi)-iz),
1-i\alpha\big) - \mathrm {I}_0\big(\omega^{-1}(\beta(\psi)-iz),
1\big)\sin \phi_0 \Big],
\end{multline*}
where the integral
\begin{equation}\label{I0}
    \mathrm {I}_0(C_1,C_2) =\int_0^\infty \frac { \sqrt{\rho}e^{-C_1 \rho}\,
    d\rho} {C_2 - i\rho} ,
    \quad
    C_1, C_2 \in \mathbf{C},\; \re C_1 > 0,\;\arg C_2 \neq
    \frac\pi2,
\end{equation}
can be expressed via the complementary error function
\begin{equation*}
 \operatorname{erfc}(w)= \frac 2 {\sqrt \pi} \int_w^\infty e^{-v^2}\,dv
\end{equation*}
by the formula
\begin{equation*}
 \mathrm{I}_0(C_1,C_2)= \frac {i\sqrt \pi}{\sqrt C_1} + e^{-i\pi/4}\pi\sqrt{C_2}e^{iC_1C_2}
\operatorname{erfc}\big(e^{i\pi/4}\sqrt{C_1 C_2}\big).
\end{equation*}

To evaluate the transient term of the solution, we substitute the
functions \eqref{F_V} and \eqref{2.7} into~\eqref{sol_tran_polar}
and obtain
\begin{align*}
& \eta_{trans}(r \mathbf{n}(\varphi))
\\
& = \frac {aAb_1b_2  e^{-\lambda t}} {2\pi \omega^2} \int_0^{2\pi}
{\rm Re}\Bigl[ \frac i z (\sin (\alpha \lambda t +\phi_0) - \sin
\phi_0) + \sin \phi_0 \int_0^\infty \frac {e^{-\rho
z}d\rho}{\rho-i}
\\
& \qquad + \frac{\alpha - i}2  e^{-i(\alpha t + \phi_0)}
\int_0^\infty  \frac {e^{-\rho z}d\rho}{\rho+\alpha-i}
\\
& \qquad  + \frac{\alpha + i}2 e^{i(\alpha t + \phi_0)}
\int_0^\infty \frac {e^{-\rho z}d\rho
}{\rho-\alpha-i}\Bigr] d\psi + O(\mu)
\\
& = \frac {aAb_1b_2  e^{-\lambda t}} {2\pi \omega^2} \int_0^{2\pi}
{\rm Re}\Bigl[ \frac i z (\sin (\alpha \lambda t +\phi_0) - \sin
\phi_0)\\
& \qquad  + \sin \phi_0 \frac i 2 e^{-iz} \bigl(\pi + 2i{\rm Ci}
(z) - 2 {\rm Si} (z)\bigr)
\\
& \qquad + \frac{\alpha - i}2  e^{-i(\alpha t + \phi_0)} e^{(
\alpha - i)z} E_1\bigl(( \alpha - i)z\bigr)\\ & \qquad +
\frac{\alpha + i}2 e^{i(\alpha t + \phi_0)} e^{-(\alpha+i)z}
E_1\bigl(-(\alpha + i)z\bigr)\Bigr] d\psi + O(\mu),
\end{align*}
where $z=z(r,\varphi,\psi) = \omega^{-1} \bigl(\beta(\psi) - i r
\mu^{-1} \cos(\psi - \varphi)\bigr)$, ${\rm Re} (z) > 0$, and
\begin{equation*}
    E_1\bigl(z\bigr) \equiv \int_z^{+\infty} \frac
    {e^{-t}}{t}dt,\qquad
    {\rm Ci}\, (z) \equiv - \int_z^\infty \frac {\cos t }{t} dt,
\quad {\rm Si}\, (z) \equiv \int_0^z \frac {\sin t }{t} dt.
\end{equation*}

\subsection{The case of a polynomial source}

Now let
\begin{equation*}
  g_0(\tau) = e^{-\tau} P(\tau),
\end{equation*}
where
\begin{equation*}
  P(\tau) =
 \sum_{k=1}^n\tfrac{P_k}{k!}\tau^k
\end{equation*}
is a polynomial of degree $n$ with coefficients $P_k$ such that
$P_0=0$ and $\sum_{k=1}^n P_k=1$. Let us use formula \eqref{G0} for
$G_0(\xi,\tau)$. Since
\begin{equation*}
\int_0^\infty e^{-t-\tau-i\xi\tau}(t+\tau)^k\,d \tau  =
e^{-t}\biggl(t+i\frac{\partial}{\partial\xi}\biggr)^k \frac 1 {1+i\xi},
\end{equation*}
it follows that
\begin{equation}\label{2.8g}
G_0(\xi,t) = e^{- t} P \biggl(t+i\frac{\partial}{\partial\xi}\biggr)
\frac 1 {1+i\xi} ,\quad
 \widetilde g_0(\xi)=\frac1{\sqrt{2\pi}}P
 \biggl(i\frac{\partial}{\partial\xi}\biggr)\frac 1 {1+i\xi},
\end{equation}
and we see that $G_0(\xi,t)$ again has the form~\eqref{2.7a}. Using
\eqref{F_V}, \eqref{2.8g} and~\eqref{4.3}, we evaluate the wave
profile of the propagating part of the solution as follows:
\begin{multline*}
F(z,\psi) = \frac {A b_1 b_2 e^{-i\pi/4}} {\sqrt{2\pi}\omega^{3/2}}
\biggl[P\biggl(-\pd{}{C_2}\biggr) \mathrm {I}_0(\rho(\beta(\psi) - i
z)/\omega,C_2)\biggr]\bigg|_{C_2=1}
\\ =
-i \frac {A b_1 b_2 \sqrt \pi }{{\sqrt{2} \omega^{3/2}}}e^{i C_1  }
\biggl[P\Bigl(
 -\overset2C_1
\Bigl(\overset1{\overline{i + \frac 1 {2C_1} + \frac{d}{d
C_1}}}\Bigr)\Bigr) {\rm erfc} (\sqrt {i C_1 })
\Bigr)\biggr]\bigg|_{C_1 =\frac{\beta(\psi) - iz}\omega},
\end{multline*}
where $\mathbf{I}_0(C_1,C_2)$ is the integral \eqref{I0}.

\begin{remark}
In both examples, one can prove that the following asymptotic
formulas hold for the functions $F(z,\psi)$ for small $\omega$:
\begin{equation*}
 F(z,\psi) =
\frac{i b_1 b_2  }{2\sqrt{2}(z+i\beta(\psi))^{3/2}}+O(\omega).
\end{equation*}
This means that for small $\omega$ the solution of the
inhomogeneous problem (corresponding to ``sources stretched in
time'') passes into the solution of the homogeneous problem
(corresponding to ``instantaneous sources'').
\end{remark}

Let us compute the transient term of the solution for the case in
which $P(\tau)$ is a second-order polynomial; then
\begin{multline*}
 G_0(\xi,t) =
 e^{- t} \biggl( \frac{P_2
t^2/2 + (P_1 - P_2)t- P_1 } {1+\xi^2} + \frac{2P_2 t + 2  P_1 - 3
P_2} {(1+\xi^2)^2} + \frac{4 P_2 }{(1+\xi^2)^3}\biggr)
\\
- i\xi e^{- t} \biggl( \frac {P_2 t^2/2 + P_1 t} {1 + \xi^2} +
\frac{2P_2 t +2 P_1 - P_2} {(1+\xi^2)^2} + \frac {4 P_2}
{(1+\xi^2)^3} \biggr).
\end{multline*}
For the transient term, we find
\begin{multline*}
\eta_{trans} = - \lambda^2 e^{-\lambda t}
\Bigl[\Bigl(P_2 \lambda^2 t^2 / 2 + (P_1 - P_2) \lambda t - P_1\Bigr)
\Theta_1\Bigl(\frac{x}{\mu}\Bigr)
\\ {}+
\Bigl(2 P_2 \lambda^3 t + (2 P_1 -3 P_2) \lambda^2\Bigr)
\Theta_2\Bigl(\frac{x}{\mu}\Bigr) + 4 P_2 \lambda^4
\Theta_3\Bigl(\frac{x}{\mu}\Bigr) \Bigr],
\end{multline*}
where
\begin{align*}
\Theta_k(y,\mu) &=\frac{A b_1b_2}{2\pi\lambda^{2k}}
\int_{\mathbb{R}^2}\frac{e^{i\langle p,y\rangle}
e^{-\sqrt{(b_1p_1)^2+(b_2p_2)^2}}}{(1+(\omega|p|)^2)^k} \,d p_1d p_2.
\end{align*}
If we pass to the polar coordinates by setting $y = r \mathbf
n(\varphi)$ and $p =  \rho \mathbf n(\psi)$, then we obtain
\begin{equation*}
\Theta_k(r \mathbf n (\varphi),\mu) =
\frac{Ab_1b_2}{2\pi\lambda^{2k}} \int_{0}^\infty\int_0^{2\pi}
\frac{\rho e^{ -\rho (\beta(\psi) - i r \cos (\psi-\varphi)) }} {(1+
\omega^2\rho^2)^k} \,d\rho d \psi.
\end{equation*}
Here one can evaluate the integral over $\rho$. For $k=1,2,3$, we
obtain
\begin{multline*}
\Theta_1(r\mathbf{n}(\varphi),\mu)
= \frac{Ab_1b_2}{2\pi \lambda^2 \omega^2} \int_0^{2\pi} d\psi
\Bigl( -\cos (z)  {\rm Ci} \, (z) + \frac 1 2 \sin (z)  \bigl(\pi -
2 {\rm Si} \, (z)\bigr) \Bigr),
\\
\shoveleft{\Theta_2(r\mathbf{n}(\varphi),\mu)
 = \frac{Ab_1b_2}{8\pi \lambda^2 \omega^2} \int_0^{2\pi} d\psi
\Bigl( 2 - 2 z \sin(z) {\rm Ci}(z) - z \cos(z) \bigl(\pi - 2{\rm
Si}(z)\bigr) \Bigr),}
\\
\shoveleft{\Theta_3(r\mathbf{n}(\varphi),\mu)
 = \frac{Ab_1b_2}{32\pi \lambda^2 \omega^2} \int_0^{2\pi} d\psi
\Bigl( 4 - z \sin(z) \bigl(\pi z + 2 {\rm Ci} (z) - 2 z {\rm
Si}(z)\bigr)}
\\ {}+
z \cos(z)(-\pi + 2 z {\rm Ci}(z) + 2 {\rm Si}(z))
\Bigr),
\end{multline*}
where $z(\psi) = \omega^{-1}(\beta - ir\cos(\psi-\varphi))$.

An illustration of the solution given by the sum of propagating and
transient terms in the second example is shown in Fig.
\ref{Fig_eta_P1}. Here the propagating part is calculated for the
constant velocity $c(x) \equiv c_0 = 1$, and other constants are
$b_1 = 1, b_2 = 2, \Lambda = 1, \mu = 0.1, P_1 = 0, P_2 = 1$. The
first four snapshots are taken at small times $t = 0.3, 0.7, 1.0,
1.5$ to show how the transient term behaves, and the last three
snapshots are taken at large times $t = 1.5, 4.0, 6.5$. At $t=6.5$,
the transient term practically disappears, while the propagating
part continues its motion.The function $g_0$ and the wave profile
for $P_1=-2, P_2=3$, and various $\lambda$ are compared in
Fig.~\ref{druids}. For small $\lambda$, the wave profile has the
form that ``reproduces'' the shape of the function $g_0$, while for
large $\lambda$ the wave profile is almost the same as for $g_0 =
\delta(t)$.

\begin{figure}
\centering
\includegraphics[scale=0.45]{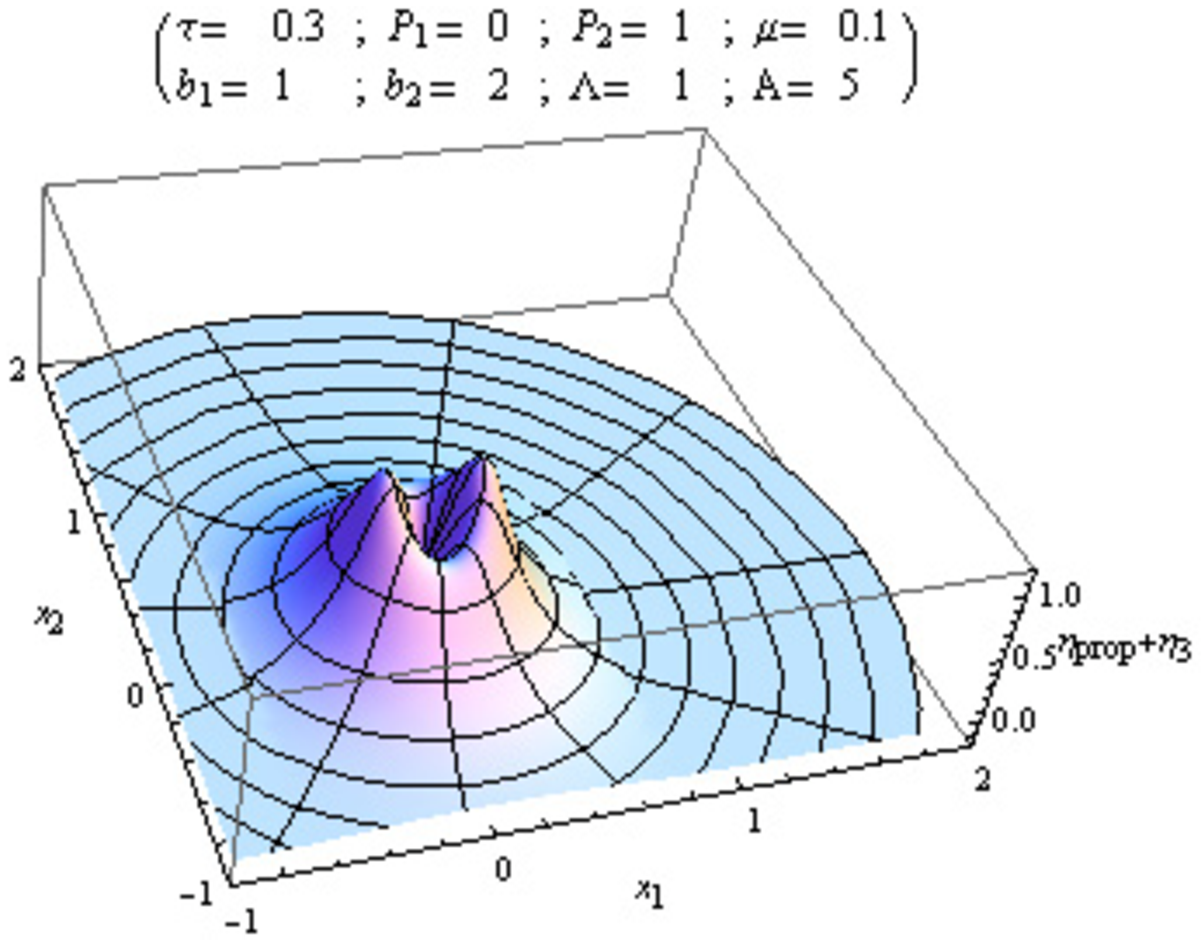}
\includegraphics[scale=0.45]{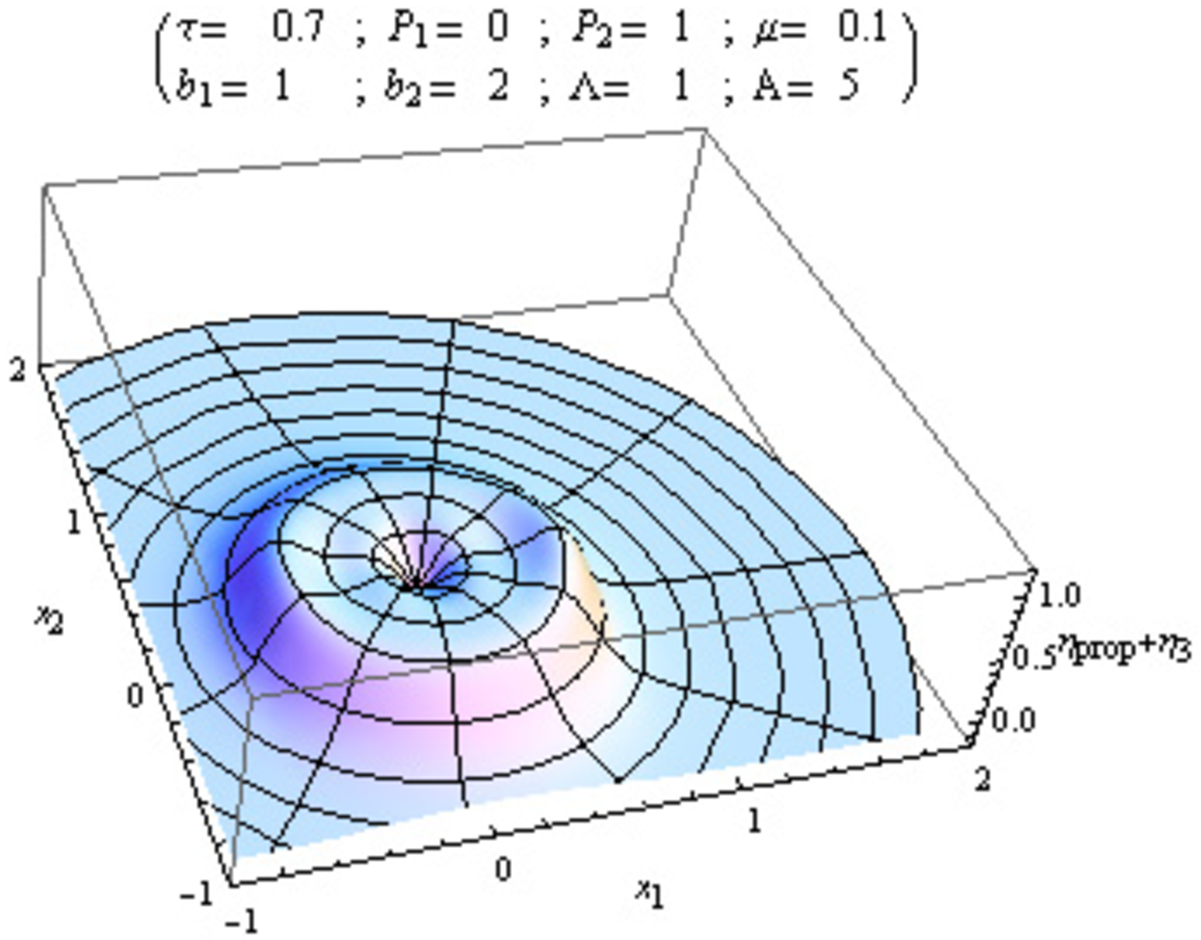}

\includegraphics[scale=0.45]{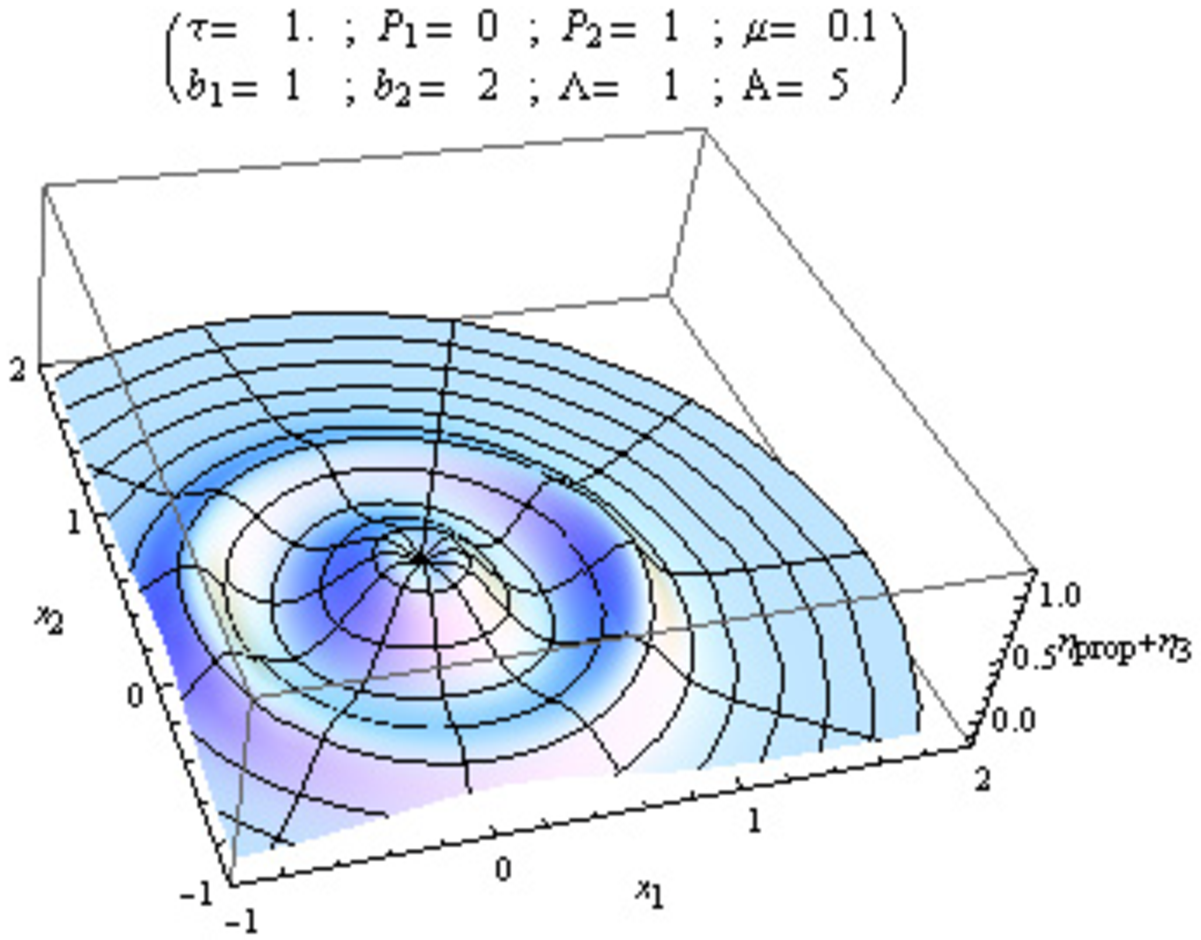}
\includegraphics[scale=0.45]{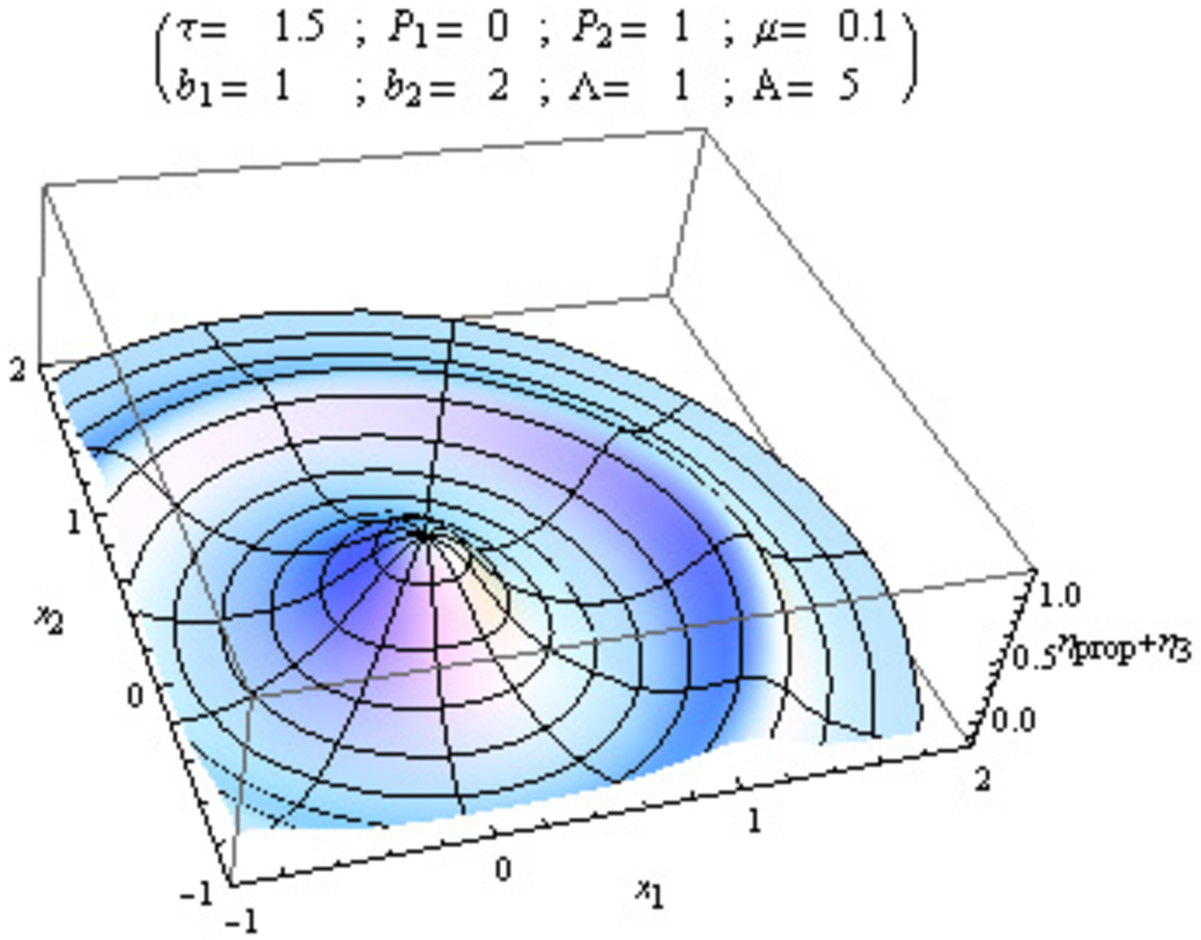}

\includegraphics[scale=0.45]{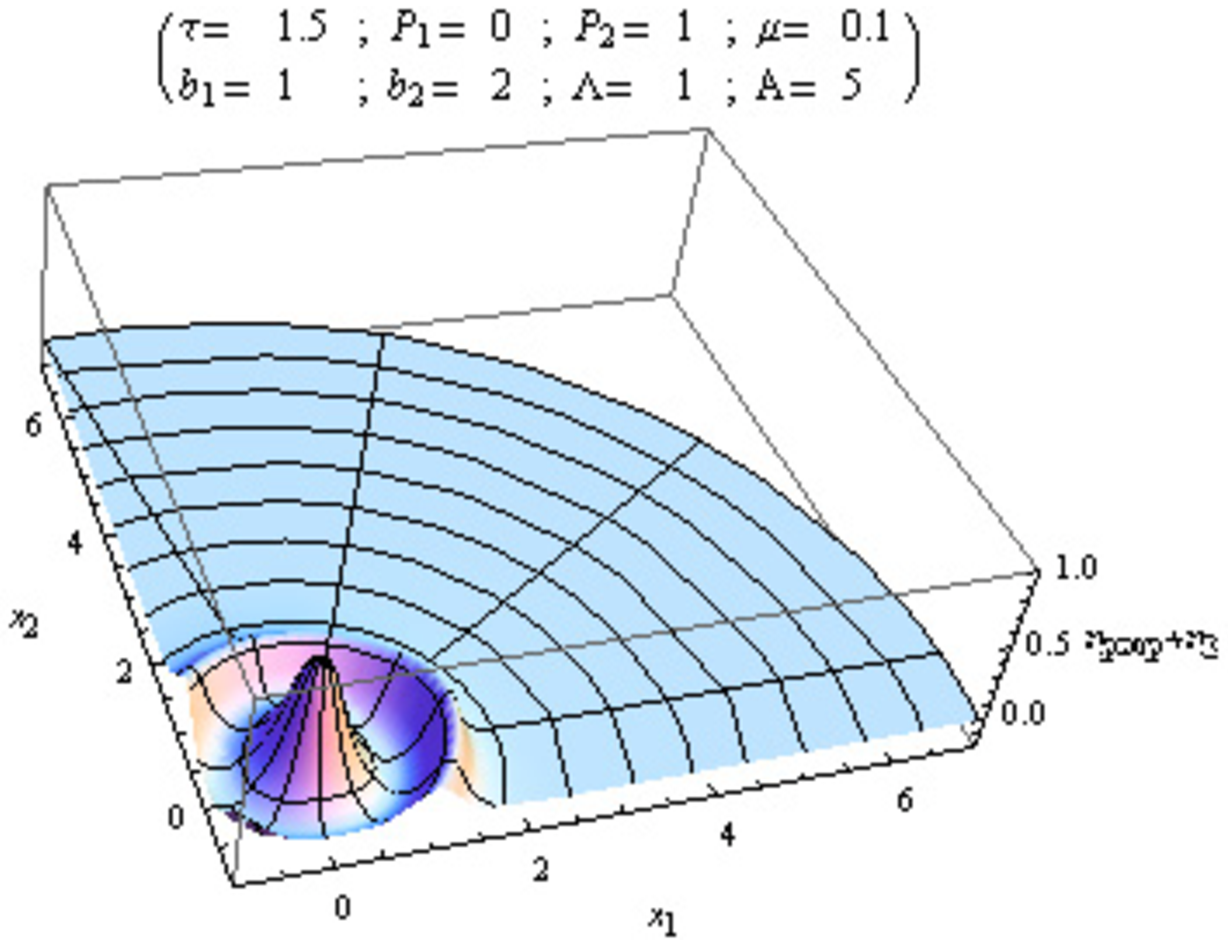}
\includegraphics[scale=0.45]{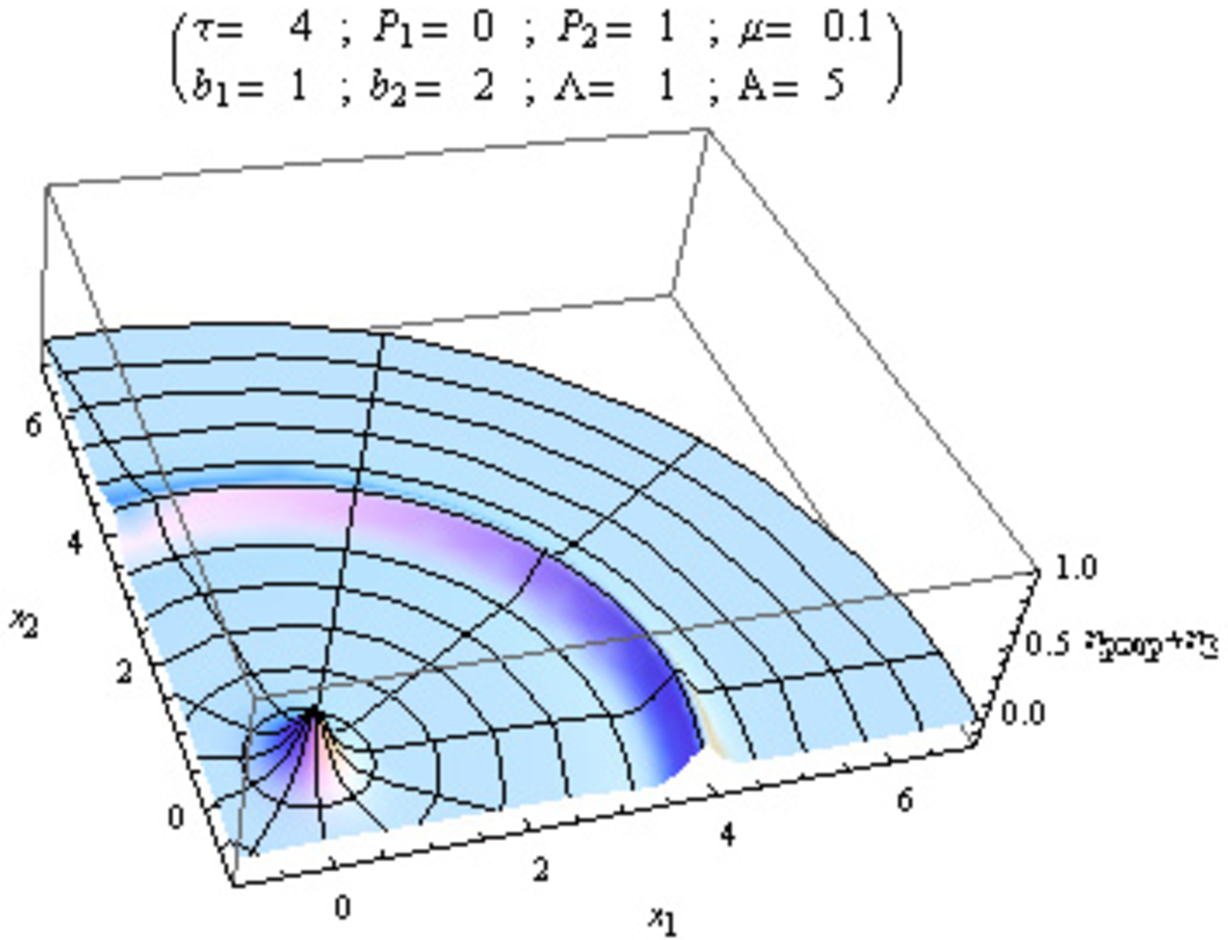}

\includegraphics[scale=0.45]{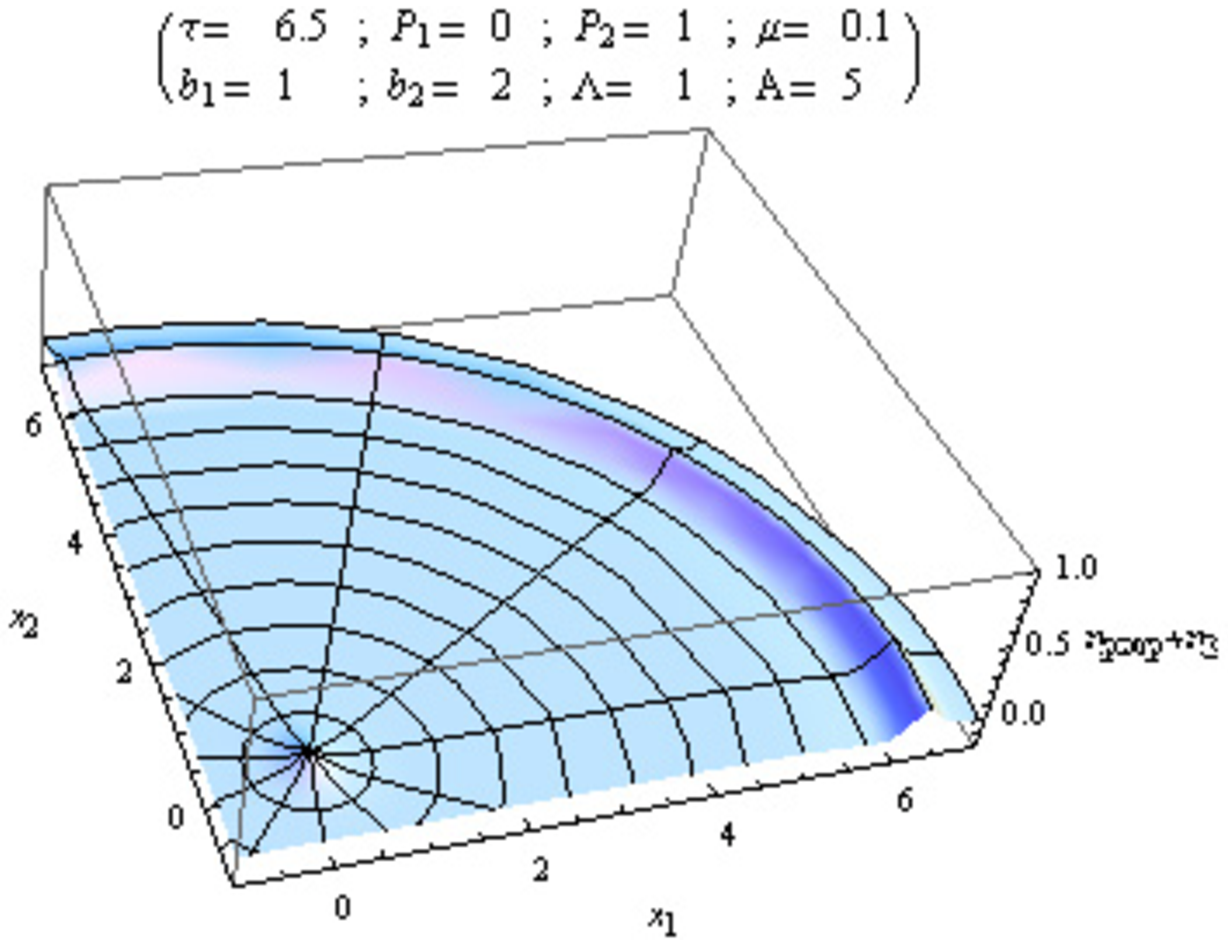}
\caption{Sum of waves $\eta_{\rm prop}+\eta_{\rm trans}$.\label{Fig_eta_P1}}
\end{figure}
\begin{figure}
\includegraphics[width=0.8\textwidth]{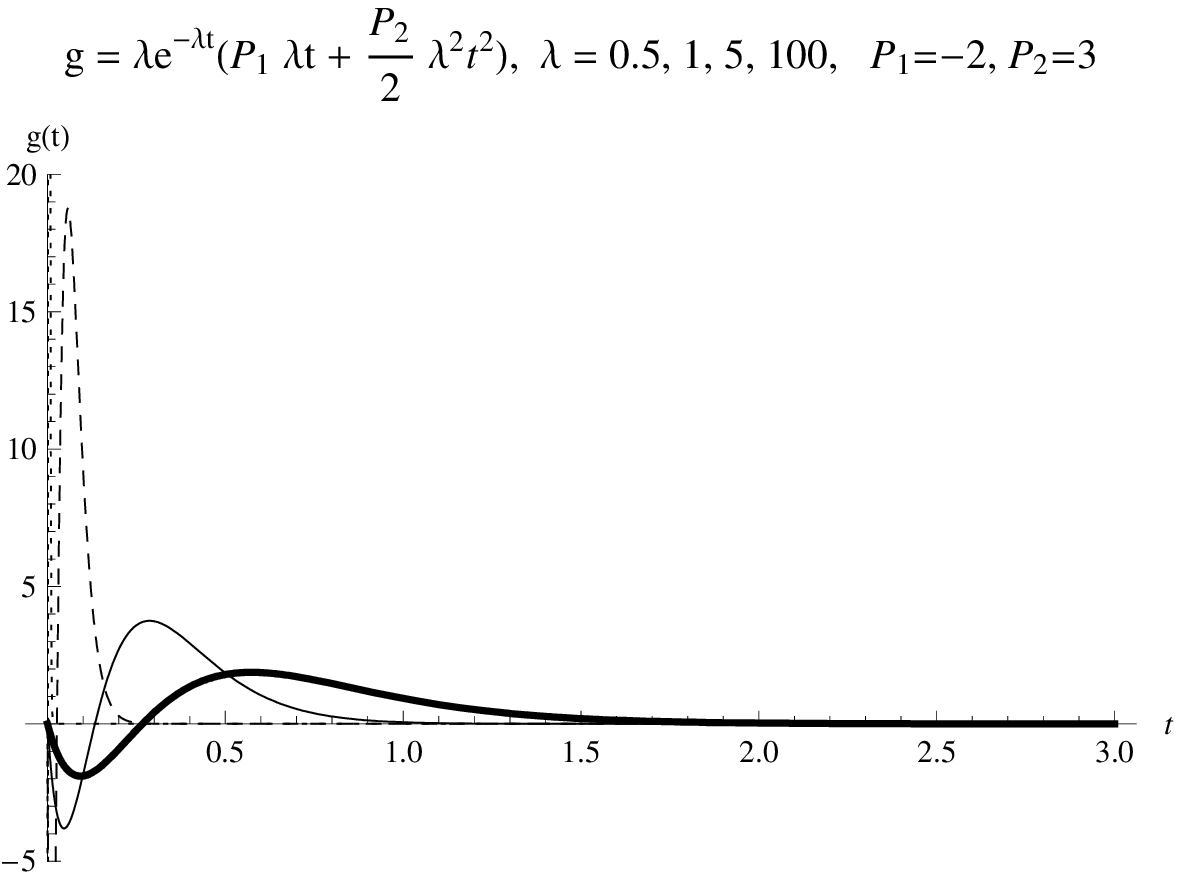}
\includegraphics[width=0.8\textwidth]{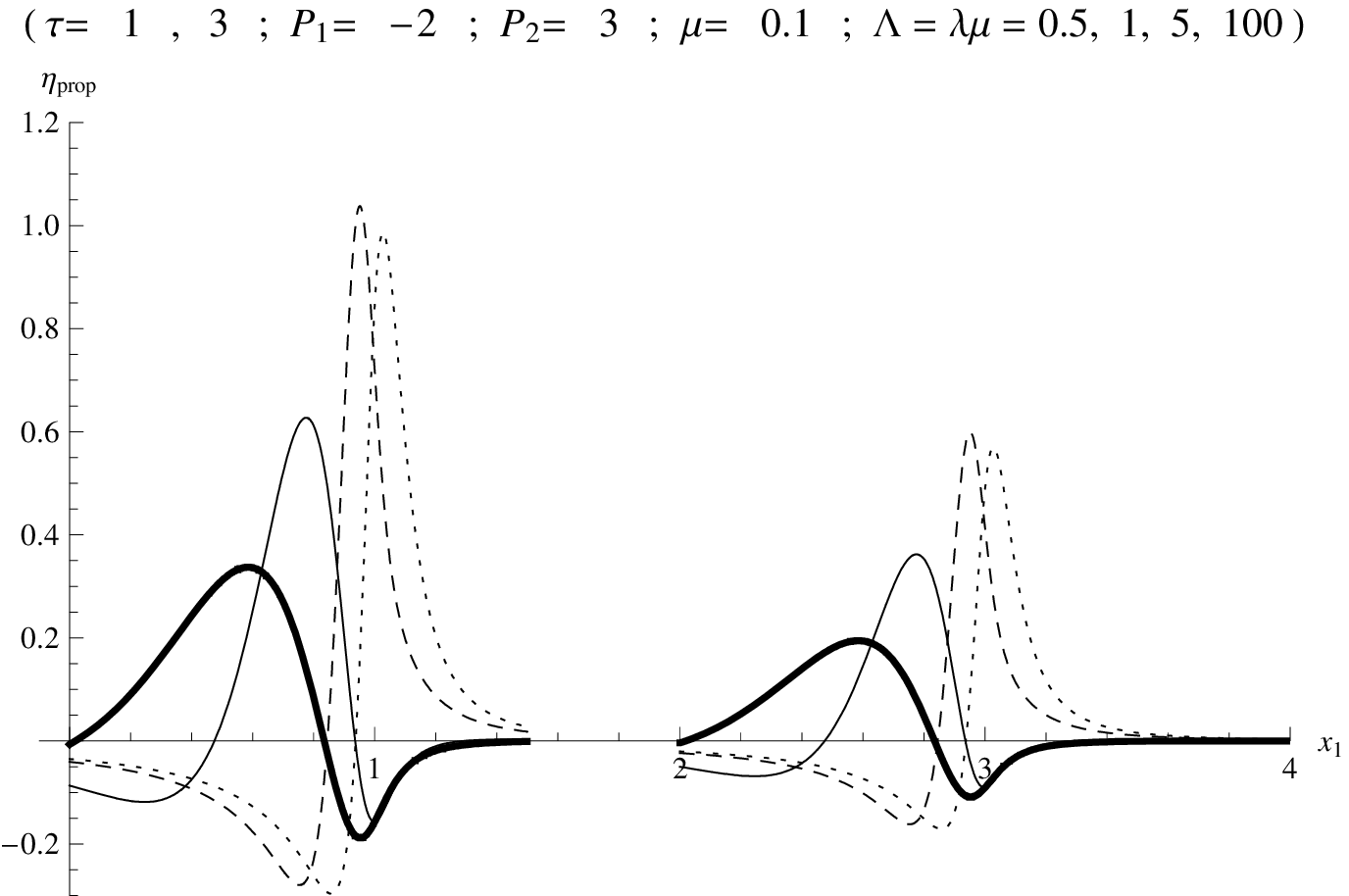}
\caption{Examples of profiles of propagating waves.\label{druids}}
\end{figure}

\end{document}